\newtheorem{theorem}{Theorem}[section]
\newtheorem{proposition}[theorem]{Proposition}
\newtheorem{corollary}[theorem]{Corollary}
\newtheorem{lemma}[theorem]{Lemma}
\theoremstyle{definition}
\newtheorem{definition}[theorem]{Definition}
\newtheorem{remark}[theorem]{Remark}
\newtheorem{question}[theorem]{Question}
\newcommand{\V}{\Vert}
\newcommand{\Z}{\mathbb{Z}}
\newcommand{\C}{\mathbb{C}}
\newcommand{\N}{\mathbb{N}}
\newcommand{\R}{\mathbb{R}}
\newcommand{\T}{\mathbb{T}}
\newcommand{\mh}{\mathcal{H}}
\newcommand{\id}{\textup{id}}
\newcommand{\dr}{\textup{dr}}
\newcommand{\nor}{\trianglelefteq}
\newcommand{\Stab}{\textup{Stab}}
\newcommand{\supp}{\textup{supp}}
\numberwithin{equation}{section}
\title[Finite decomposition rank for virtually nilpotent groups]{Finite decomposition rank  for virtually nilpotent groups}
\author{Caleb Eckhardt}
\email{eckharc@miamioh.edu}
\author{Elizabeth Gillaspy}
\email{elizabeth.gillaspy@mso.umt.edu}
\author{Paul McKenney}
\email{mckennp2@miamioh.edu}
\thanks{C.E.\ was partially supported by a grant from the Simons Foundation.  E.G.~was primarily supported by the Deutsches Forschungsgemeinschaft via SFB 878 (awarded to the Universit\"at M\"unster, Germany).  Part of this work was carried out while C.E. and E.G. were at the CRM in Bellaterra, Spain as part of the IRP ``Operator algebras: dynamics and
interactions'' in 2017.}
\address{%
  Department of Mathematics,
  Miami University,
  Oxford, OH, 45056,
  United States}
\address{%
 Department of Mathematical Sciences,
 University of Montana,
 32 Campus Drive \# 0864,
 Missoula, MT, 59812-0864, 
 United States
  }
\begin{document}
\maketitle
\begin{abstract} We show that inductive limits of virtually nilpotent groups have strongly quasidiagonal C*-algebras, extending results of the first author on solvable virtually nilpotent groups. We use this result to show that the decomposition rank of the group C*-algebra of a finitely generated virtually nilpotent group $G$ is bounded by $2\cdot h(G)!-1$, where $h(G)$ is the Hirsch length of $G.$ This extends and sharpens results of the first and third authors on finitely generated nilpotent groups.  It then follows that if a C*-algebra generated by an irreducible representation of a virtually nilpotent group satisfies the universal coefficient theorem, it is classified by its Elliott invariant.

\end{abstract}
\section{Introduction} 
Thanks to intense work by many mathematicians over the last thirty years on the Elliott classification program, we now know that for a large class of simple nuclear C*-algebras, the \emph{Elliott invariant} -- consisting of $K$-theoretic and tracial data -- constitutes a classifying invariant \cite{Elliott16,Gong15,Tikuisis17}.  In other words, two C*-algebras within this class which have the same Elliott invariant are isometrically $*$-isomorphic.  

Among finitely generated groups, the virtually nilpotent ones (Definition \ref{def:nilpotent}) turn out to be precisely those groups whose irreducible representations always generate simple nuclear C*-algebras \cite{Lance73, Poguntke81,Moore76}.  
Therefore irreducible representations of  virtually nilpotent groups form  the largest class of groups (and representations) to which one may hope to apply the results of the Elliott classification program.

In sharp contrast to the Lie group case, the C*-algebras generated by irreducible representations of discrete (non virtually abelian) nilpotent groups are extremely interesting and varied. See  \cite{Eckhardt16a, Elliott84,Packer87,Rieffel81,Rieffel88} for examples of such C*-algebras.  We were motivated in part by these two facts to undertake the current project; Theorem \ref{thm:main} brings us closer to our eventual goal of showing that the C*-algebras generated by irreducible representations of finitely generated, virtually nilpotent groups are classified by their Elliott invariants.

As is well-known (cf.~\cite{Elliott08} for more details), it is not possible to classify all simple nuclear C*-algebras by their Elliott invariants.  Nonetheless, decades of work, culminating in the recent papers \cite{Elliott16,Gong15,Tikuisis17}, has established  that for the class of unital, separable, simple, nuclear C*-algebras with finite nuclear dimension which satisfy the universal coefficient theorem,  the Elliott invariant is a classifying invariant. 

The main result of this paper (Theorem \ref{thm:main}) shows that the decomposition rank of $C^*(G)$ is finite for all finitely generated, virtually nilpotent groups $G$.  Since decomposition rank bounds nuclear dimension it follows that $C^*(G)$ also has finite nuclear dimension and therefore so do all of its quotients. Therefore if the C*-algebra generated by an irreducible representation of a virtually nilpotent group satisfies the universal coefficient theorem it is classifiable by its Elliott invariant.  Unfortunately, we suspect that verifying the universal coefficient theorem for these C*-algebras will be quite difficult; we discuss this further in Section \ref{sec:comments}.

In addition to its relevance to the Elliott classification program,  the decomposition rank (and  nuclear dimension) of a group C*-algebra has an intriguing relationship with the structure of the underlying group.  Although our Theorem \ref{thm:main} sheds more light on this relationship, it is not yet completely understood.  We summarize the known results here, and discuss some open questions in Section \ref{sec:comments} below.

When $G$ is abelian, the decomposition rank and nuclear dimension of $C^*(G) \cong C_0(\widehat{G})$ coincide with the topological dimension of $\widehat{G}$, and hence with the rank of $G$.  This also indicates why we restrict ourselves to  \emph{finitely generated} groups when trying to establish finite decomposition rank; the infinitely generated group $\Z^\N$, for example, has infinite (decomposition) rank.  
 To our knowledge, the first decomposition rank estimate for   non-abelian groups was Carri\'on's \cite[Theorem 2.2]{Carrion11}, which establishes that if a group $G$ is a central extension of a finitely generated abelian group by a finitely generated abelian group, then $C^*(G)$ has finite decomposition rank.
In \cite{Eckhardt15}, the first author showed that for a group $G$ of the form $\Z^d\rtimes \Z$, $G$ is virtually nilpotent if and only if  $C^*(G)$ is strongly quasidiagonal (see Section \ref{sec:drZstab} below).  Combined with \cite[Theorem 5.3]{Kirchberg04} and our Theorem \ref{thm:main} below, this implies that among groups of the form $\Z^d\rtimes \Z$, the  virtually nilpotent ones are precisely those whose C*-algebras have finite decomposition rank.  
In a similar vein, Hirshberg and Wu \cite{Hirshberg17} recently proved that \emph{all} groups of the form $\Z^d\rtimes \Z$ have finite nuclear dimension; thus, there exist (non virtually nilpotent) groups of this form which have infinite decomposition rank but finite nuclear dimension.
Finally, in \cite{Giol10}, Giol and Kerr provided examples of topological dynamical systems such that $C(X)\rtimes \Z$ has arbitrarily large radius of comparison.  One corollary of their work is that the C*-algebras of the finitely generated wreath products $\Z^d\wr\Z$ have infinite nuclear dimension. 

In summation, among finitely generated groups, 
our Theorem \ref{thm:main} shows that all virtually nilpotent groups have C*-algebras with finite decomposition rank. Although there exist groups $G$ such that $C^*(G)$ has infinite decomposition rank but finite nuclear dimension, and there are examples of  groups $G$ where $C^*(G)$ has infinite nuclear dimension, there are no known examples of non virtually nilpotent groups for which $C^*(G)$  has finite decomposition rank (see Section \ref{sec:comments} for more on this).

We close the introduction by describing the contents of this paper.  
In \cite{Eckhardt14b} the first and third authors proved that if $G$ is a finitely generated, nilpotent group then $C^*(G)$ has finite nuclear dimension.\footnote{In fact, the authors actually showed the stronger claim that $C^*(G)$ has finite \emph{decomposition rank} but failed to realize it--see Section \ref{sec:finitedrnilpotent} below.} The method of proof worked inductively on the Hirsch length of the group by decomposing $C^*(G)$ as a continuous field over the dual of its center and analyzing the nuclear dimension of the simple fibers.  

The analysis of the simple fibers was possible by recent breakthroughs in the classification theory of nuclear C*-algebras (see Theorem \ref{thm:manyequivalence}) and certain properties enjoyed by C*-algebras generated by irreducible representations of nilpotent groups (see Theorem \ref{thm:T1}).  Since C*-algebras generated by \emph{virtually} nilpotent groups enjoy many of the same properties as those generated by nilpotent groups, it seemed likely that this same sort of analysis could be carried out with virtually nilpotent groups.

One key difference between the nilpotent and virtually nilpotent cases  is the structure of the continuous fields defined by the group C*-algebras.  In the case of nilpotent groups, one decomposes $C^*(G)$ as a continuous field over the dual of its center $Z(G)$, and the fibers are twisted group C*-algebras $C^*(G/Z(G),\sigma)$ where $\sigma$ is a 2-cocycle with values in $\T$ (see Section \ref{sec:ctsfield}). 

On the other hand, a virtually nilpotent group may have trivial center. Therefore in Section \ref{sec:ctsfield} we show how to decompose $C^*(G)$ as a continuous field over a topological quotient $\widetilde{Z}$ of the dual of $Z(N)$, for a finite index nilpotent subgroup  $N$ of $G$.  In this case the fibers may no longer be twisted group C*-algebras, but they are twisted crossed products in the sense of Packer and Raeburn \cite{Packer89, Packer90, Packer92}. These twisted crossed products are closely tied to primitive quotients of both $C^*(G)$ and $C^*(N)$; see Proposition \ref{prop:Greenimp} and Corollary \ref{cor:Greenimprep} below.

The main roadblock to extending the results of \cite{Eckhardt14b} to the virtually nilpotent case was the absence of (a proof of) quasidiagonality.   In \cite{Eckhardt14} the first author showed that every representation of a solvable virtually nilpotent group is quasidiagonal.    In Section \ref{sec:strongqd} below, we take a much less technical -- and much smarter -- approach to quasidiagonality  than the one of \cite[Section 3]{Eckhardt14} to extend the quasidiagonality result of \cite{Eckhardt14} to cover all virtually nilpotent groups.  Our results in this section, as well as in Section \ref{sec:finitedr}, rely on an analysis of the relationship between quotients of $C^*(G)$ and quotients of $C^*(N)$ for finite index subgroups $N$  of $G$. 
While this material (presented in Section \ref{sec:GrelN} below) is straightforward, we hope that other researchers will  find the interplay between traces, representations, subgroups, and induced ideals fruitful and interesting.

With the quasidiagonality question settled, we turn our focus towards the analysis of the twisted crossed product fibers in the continuous field decomposition of $C^*(G).$   In Section \ref{sec:finitedrnilpotent} we refine the arguments from \cite{Eckhardt14b} to establish that $C^*(G)$ has finite decomposition rank when $G$ is finitely generated and nilpotent. This slightly altered approach carries the benefit of a better decomposition rank bound than the one obtained in \cite{Eckhardt14b}.  This groundwork is built upon in Section \ref{sec:finitedr} to bound the decomposition rank of the fiber algebras of $C^*(G)$, using the work of Matui and Sato on strongly outer (twisted) actions from \cite{Matui14a}.  Finally, we use Carri\'{o}n's lemma on decomposition rank of continuous fields \cite{Carrion11} to obtain our bound on the decomposition rank of $C^*(G)$.

\section{Preliminaries} \label{sec:prelim}
\subsection{Notation and Definitions} \label{sec:notation}
In this section, we recall the definitions of our main objects of study in this paper and set our notation.
\begin{definition} \label{def:nilpotent} For a group $G$, write $Z(G)$ for the center of $G.$ Set $Z_1(G)=Z(G)$ and  recursively define $Z_i(G)$ for $i\geq2$ by $Z_i(G)/Z_{i-1}(G)=Z(G/Z_{i-1}(G)).$  We say $G$ is  \textbf{nilpotent} if there is an $n\geq1$ such that $G=Z_n(G).$  The group $G$ is  \textbf{virtually nilpotent} if it contains a finite index nilpotent subgroup.

\end{definition}
 We  refer the reader to Segal's text \cite{Segal83} for more information on nilpotent groups.
\subsubsection{}Let  $G$ be a discrete group. We always denote by $e\in G$ the trivial element of $G.$ Let $C^*(G)$ denote the full group C*-algebra of $G.$ 
{The unitary generators of $C^*(G)$ arising from $G$ will be denoted by  $\{u_g\}_{g\in G}$.}
We let $\lambda$ denote the left regular representation of $G$ on $\ell^2G$ and write $\{ \delta_g:g\in G \}$ for the standard basis of $\ell^2G.$  The reduced group C*-algebra $C^*_r(G)$ is the sub-C*-algebra of $B(\ell^2G)$ generated by $\lambda(G)$.
 
  For amenable groups the canonical homomorphism from $C^*(G)$ onto $C^*_r(G)$ is an isomorphism.  Since virtually nilpotent groups are built up by extensions of abelian and finite groups they are (elementary) amenable groups. We therefore make no distinction between $C^*(G)$ and $C^*_r(G)$ when $G$ is virtually nilpotent.  We refer the reader to Chapter 7 of Pedersen's book \cite{Pedersen79} for more information.

\subsubsection{}For a *-representation $\pi$ of $A$ we write $H_\pi$ for the Hilbert space on which $\pi(A)$ acts. If $\phi$ is a state on $A$ we write $(\pi_\phi,L^2(A,\phi))$ for the GNS representation of $A$ associated with $\phi.$  If $\phi$ is a normalized positive definite function on $G$, then we write $(\pi_\phi,L^2(G,\phi))$ for the associated GNS representation of $G$ (equivalently, of $C^*(G)$). If $\pi$ is a unitary representation of $G$ we write $C^*_\pi(G)$ for the C*-algebra generated by $\pi(G).$
\subsubsection{} As with the correspondence between states on $C^*(G)$ and normalized positive definite functions on $G$, there is a correspondence between tracial states on $C^*(G)$ and normalized positive definite functions on $G$ that are constant on conjugacy classes.  Thus, we will use the word \textbf{trace} to indicate either a tracial state on a C*-algebra, or a normalized positive definite function on a group $G$ which is constant on conjugacy classes. The {\bf canonical trace} $\tau_G$ on a group $G$ is the function $\tau_G(g) = \delta_{e, g}$.
The set of all traces on a C*-algebra is convex, so we will use the phrase \textbf{extreme trace}  to indicate an extreme point in this convex set.  In many of the articles we reference, the term \emph{character} is used for an extreme trace.  In this paper we prefer to reserve the term \textbf{character} for a continuous homomorphism from an abelian group to $\T.$ 

\subsubsection{} \label{sec:trivialext} Let $G$ be a group and $\gamma:Z(G)\rightarrow\T$ be a character.  We denote by $\overline{\gamma}:G\rightarrow \C$ the trace on $G$ defined by $\overline{\gamma}(g)=\gamma(g)$ if $g\in Z(G)$ and $\overline{\gamma}(g)=0$ if $g\not\in Z(G);$ we call this the {\bf trivial extension} of $\gamma$.

\subsection{Representation theory facts} \label{sec:repfacts} We record some classical facts about representations of C*-algebras and virtually nilpotent groups that we will use repeatedly in this paper. Recall that a representation $\pi$ of a C*-algebra $A$ is a {\bf factor representation} if its double commutant $\pi(A)''$ is a factor: that is, a  von Neumann algebra with trivial center.
\begin{theorem} \label{thm:repfacts} Let $A$ be a unital separable C*-algebra and $\tau$ a trace on $A.$  Then the following are equivalent by \textup{\cite[Corollary 6.8.6]{Dixmier77}}:
\begin{enumerate}
\item $\tau$ is an extreme trace.
\item $\pi_\tau$ is a factor representation.
\end{enumerate}
Moreover if either condition (1) or (2) holds then $\ker(\pi_\tau)$ is a primitive ideal of $A$ by \textup{\cite[II.6.1.11, II.6.5.15]{Blackadar06}}.
\end{theorem}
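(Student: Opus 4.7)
The theorem consolidates two well-known results, so the main task is to verify that the cited passages apply; nevertheless, here is the underlying strategy I would follow to reconstruct it.

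For $(1)\Leftrightarrow(2)$, the essential tool is the GNS triple $(\pi_\tau, L^2(A,\tau), \xi_\tau)$: because $\tau$ is a trace, the cyclic vector $\xi_\tau$ is a trace vector for the von Neumann algebra $M := \pi_\tau(A)''$, meaning the induced vector state is tracial on all of $M$. I would then exploit the correspondence between central elements $0 \leq z \leq 1$ in $\mathcal{Z}(M)$ and positive tracial functionals on $A$ dominated by $\tau$, implemented by $\tau_z(a) := \langle \pi_\tau(a) z \xi_\tau, \xi_\tau\rangle$. For $(1)\Rightarrow(2)$, any nonscalar central $z$ would produce a nontrivial convex decomposition $\tau = \tau_z + \tau_{1-z}$ into positive traces, contradicting extremality. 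Conversely, if $M$ is a factor and $\tau = \alpha \tau_1 + (1-\alpha)\tau_2$ with $0<\alpha<1$, then $\alpha \tau_1 \leq \tau$ and Sakai's noncommutative Radon--Nikodym theorem yields an $h \in M^+$ with $\alpha\tau_1(a) = \langle \pi_\tau(a) h \xi_\tau, \xi_\tau\rangle$; tracialness of $\tau_1$ combined with that of $\xi_\tau$ forces $h \in \mathcal{Z}(M)$, and factoriality then forces $h$ to be scalar, giving $\tau_1 = \tau$.

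For the primitivity claim, separability of $A$ is essential. I would appeal directly to the cited Blackadar passages, which establish that for a separable C*-algebra the kernel of every factor representation is primitive. A standard route is to pass to the faithful factor representation of the separable quotient $A/\ker(\pi_\tau)$, and then extract a faithful irreducible subrepresentation using Dixmier--Glimm type arguments identifying the primitive ideal space with kernels of factor representations in the separable setting.

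The main obstacle is the primitivity statement rather than the equivalence $(1)\Leftrightarrow(2)$: the latter is a direct GNS plus Radon--Nikodym computation, while the former rests on the deeper Dixmier--Glimm machinery and genuinely requires the separability hypothesis.
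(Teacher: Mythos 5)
Your proposal is correct and takes essentially the same route as the paper, which offers no argument beyond the citations to Dixmier (Corollary 6.8.6) for (1)$\Leftrightarrow$(2) and to Blackadar (II.6.1.11, II.6.5.15) for primitivity; your GNS/Radon--Nikodym reconstruction of the equivalence and your ``kernel of a factor representation is prime, and prime implies primitive for separable $A$'' reconstruction of the last claim are precisely the standard arguments behind those two citations. In particular, you correctly identify the key points: traciality of the functional plus faithfulness of the canonical trace on $\pi_\tau(A)''$ forces the Radon--Nikodym derivative into the center, and separability is exactly what is needed for the primitivity conclusion.
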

In general  the  primitivity of $\ker(\pi_\tau)$ does not imply that $\tau$ is extreme\footnote{For example let $A$ be simple with more than one trace. Then any non-extreme trace produces the primitive ideal $\{ 0 \}.$}. However in the case of \emph{finitely generated} virtually nilpotent groups the situation is better.

\begin{theorem}[Poguntke, Howe, Moore and Rosenberg, Kaniuth] \label{thm:T1}  Let $G$ be a virtually nilpotent group.  Then every primitive ideal of $C^*(G)$ is maximal.  If $G$ is finitely generated, then  for every primitive ideal $I$ of $C^*(G)$ there is a unique extreme trace $\tau$ on $G$ such that
\begin{equation*}
I=\{ a\in C^*(G):\tau(a^*a)=0 \}.
\end{equation*}
Therefore for any trace $\tau$ on a finitely generated virtually nilpotent group $G$, the following statements are equivalent:
\begin{enumerate}
\item $\tau$ is an extreme trace.
\item $\pi_\tau$ is a factor representation.
\item $\ker(\pi_\tau)$ is a primitive ideal of $C^*(G)$.
\item $\ker(\pi_\tau)$ is a maximal ideal of $C^*(G)$.
\end{enumerate}
\end{theorem}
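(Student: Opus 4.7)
The plan is to assemble this theorem from three ingredients: the equivalences (1) $\Leftrightarrow$ (2) and (2) $\Rightarrow$ (3) already supplied by Theorem \ref{thm:repfacts}, the structural fact that every primitive ideal of $C^*(G)$ is maximal, and a trace-uniqueness statement for simple primitive quotients.  With these in place the cycle of equivalences closes cheaply: (3) $\Leftrightarrow$ (4) is exactly the maximality statement, and for (3) $\Rightarrow$ (1) one produces an extreme trace $\tau_0$ with $\ker(\pi_{\tau_0}) = \ker(\pi_\tau) =: I$ from the existence part of the bijection, observes that both $\tau$ and $\tau_0$ descend to tracial states on the simple quotient $C^*(G)/I$, and concludes $\tau = \tau_0$ from the uniqueness of tracial state on that quotient.

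To prove ``primitive implies maximal'' I would induct on the Hirsch length $h(G)$, reducing first to the nilpotent case.  Choose a finite-index nilpotent normal subgroup $N \nor G$ and invoke the Mackey-type machinery of Packer--Raeburn \cite{Packer89}: every primitive ideal of $C^*(G)$ arises by induction from a primitive ideal of a twisted group C*-algebra of an intermediate subgroup, and the corresponding simple quotients are related by a Morita-type equivalence which preserves maximality.  In the nilpotent case, decompose $C^*(N)$ as a continuous field over $\widehat{Z(N)}$ whose fibers are twisted group C*-algebras of $N/Z(N)$; these fibers have strictly smaller Hirsch length, so the inductive hypothesis yields maximality in each fiber, and this lifts to maximality in $C^*(N)$.

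For the bijection itself, I would invoke Kaniuth's theorem (ultimately built on a Kirillov-orbit parametrization of factor representations of finitely generated nilpotent groups, and extended to the virtually nilpotent setting via results of Howe, Moore--Rosenberg, and Poguntke): every simple primitive quotient of $C^*(G)$ carries a unique tracial state.  Pulling back this unique trace produces the required extreme trace $\tau$ associated to a given primitive ideal; the converse direction is Theorem \ref{thm:repfacts}; and uniqueness of $\tau$ given $I$ is immediate from unique-tracialness of $C^*(G)/I$.  Finite generation of $G$ is used precisely here, to guarantee Kaniuth's unique-trace result.

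The main obstacle is concentrated in the two structural inputs: the $T_1$ property of $\mathrm{Prim}(C^*(G))$ and the trace uniqueness on primitive quotients, both of which require nontrivial orbit-theoretic machinery and, in the virtually nilpotent (rather than nilpotent) case, careful handling of the cocycle twist introduced by passing to a finite-index nilpotent subgroup.  Once those ingredients are granted, the remainder of the argument is a purely formal chase of the four equivalences.
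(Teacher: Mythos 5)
Your formal closure of the four equivalences is correct and is essentially the argument the paper itself gives: the paper also reduces everything to the same two black boxes (primitive $\Rightarrow$ maximal, plus uniqueness of the trace on each simple primitive quotient), and closes $(3)\Rightarrow(1)$ by descending a convex decomposition $\tau=t\tau_1+(1-t)\tau_2$ to the simple quotient $C^*(G)/\ker(\pi_\tau)$, which uses exactly the same ingredients as your comparison of $\tau$ with the extreme trace $\tau_0$ having the same kernel. Citing Kaniuth and Howe for the trace statement also matches the paper.

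The gap is in your treatment of ``primitive implies maximal.'' The paper does not prove this; it cites Moore--Rosenberg and Poguntke, and your sketch does not substitute for those papers. Concretely: (i) an induction on Hirsch length can only address finitely generated groups, whereas the theorem asserts maximality for \emph{all} virtually nilpotent groups, and the paper genuinely uses that generality (Lemma \ref{lem:directsum} is invoked for countable, not necessarily finitely generated, groups, and Corollary \ref{cor:vnilsqd} treats inductive limits); Hirsch length is not even defined outside the virtually polycyclic setting. (ii) In your nilpotent step, the fibers of the continuous field are \emph{twisted} group C*-algebras $C^*(N/Z(N),\sigma_\gamma)$, which are not group C*-algebras of groups of smaller Hirsch length, so your inductive hypothesis does not apply to them. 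The natural repair -- realizing $C^*(N/Z(N),\sigma_\gamma)$ as a fiber of the group C*-algebra of a central extension -- does not close the induction: for the Heisenberg group $N$, the twisted algebras $C^*(\Z^2,\sigma_\theta)$ are rotation algebras, whose enveloping group C*-algebra is $C^*(N)$ itself, of the \emph{same} Hirsch length. Maximality for the fibers with faithful central character (e.g., simplicity of irrational rotation algebras) is exactly where the cited literature does hard work, and it is not an instance of the inductive statement. (iii) The reduction from virtually nilpotent to nilpotent via ``a Morita-type equivalence which preserves maximality'' assumes what must be proved: after Green's imprimitivity one is left with a twisted crossed product of a simple C*-algebra by a finite group, and maximality of its primitive ideals is precisely the nontrivial point, requiring an inner/outer dichotomy argument (compare Kishimoto-type simplicity theorems, or the paper's own Theorem \ref{thm:onlyamother}, whose entire purpose is to analyze how maximality of induced ideals can fail). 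Morita invariance of ideal lattices does not by itself deliver this. Either cite Poguntke and Moore--Rosenberg as the paper does, or be prepared to reproduce a substantial portion of their arguments.
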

\begin{proof} Moore and Rosenberg showed in \cite{Moore76} that every primitive ideal is maximal for solvable virtually nilpotent groups and Poguntke extended this to all virtually nilpotent groups in \cite{Poguntke81}.  Howe first showed the uniqueness of trace for finitely generated torsion free nilpotent groups in \cite{Howe77} and Kaniuth extended this to all finitely generated virtually nilpotent groups in \cite[Theorem 2]{Kaniuth80}. To complete the equivalences, by Theorem \ref{thm:repfacts} we only need to show the primitivity of $\ker(\pi_\tau)$  implies that $\tau$ is extreme.  

Suppose that $\ker(\pi_\tau)$ is primitive and $\tau=t\tau_1+(1-t)\tau_2$ for some traces $\tau_1$ and $\tau_2$ and $0<t<1.$  Since $\tau(a^*a)=0$ implies that $\tau_1(a^*a)=\tau_2(a^*a)=0$ and $\ker(\pi_\tau)$ is maximal it follows that $\ker(\pi_\tau)=\ker(\pi_{\tau_1})=\ker(\pi_{\tau_2}).$   Therefore $\tau_1$ and $\tau_2$ both induce traces on $C^*_{\pi_\tau}(G).$  By uniqueness of trace we must have   $\tau=\tau_1=\tau_2.$
\end{proof}

\subsection{Twisted crossed products}
\label{sec:twisted-xprod}
In our analysis of the simple quotients of virtually nilpotent group C*-algebras, we use both Green's twisted covariance algebras \cite{Green80} and the twisted crossed products of Packer and Raeburn \cite{Packer89, Packer90, Packer92}.  

\begin{definition}(\cite[Definitions 2.1-2.4]{Packer89})
\label{def:twisted-xprod}
A {\bf twisted dynamical system} $(A, G, \alpha, \omega)$ consists of a C*-algebra $A$, a locally compact group $G$, and maps $\alpha: G \to \text{Aut}\,A$,  $\omega: G\times G \to UM(A)$, such that:
\begin{itemize}
\item The maps $\omega$ and (for fixed $a \in A$) $s \mapsto \alpha_s(a)$ are Borel measurable;
\item For any $s \in G$, $\omega(s, e ) = \omega(e, s) = 1$ and $\alpha_e = id$;
\item For any $s, t \in G$, $\alpha_s \circ \alpha_t = \text{Ad}\,\omega(s, t)\circ \alpha_{st}$;
\item For any $s, t, r \in G, $ $\alpha_r(\omega(s,t)) \omega(r, st) = \omega(r,s)\omega(rs, t) $.
\end{itemize}
A {\bf covariant representation} $(\pi, U)$ of a twisted dynamical system is a nondegenerate $*$-representation $\pi: A \to B(H_\pi)$ and a map $U: G \to U(H_\pi)$ such that 
\[ \pi(\alpha_s(a)) = \text{Ad}\, U_s(a), \qquad U_s U_t = \pi(\omega(s,t)) U_{st}\]
for all $a \in A$ and $s, t \in G$.
The {\bf twisted crossed product} $A \rtimes_{\alpha, \omega} G$ of a twisted dynamical system $(A, G, \alpha, \omega)$ is the universal object for covariant representations of $(A, G, \alpha, \omega)$.
\end{definition}

\begin{definition}(\cite[p.~196]{Green80})
\label{defn:green-twist}
A {\bf twisted covariant system} $(G, A, \mathcal T)$ consists of a C*-algebra $A$, an action $\alpha$ of $G$ on $A$, and a continuous homomorphism $\mathcal T: N \to M(A)$ for some closed normal subgroup $N$ of $G$, such that 
\[ \mathcal T(n) a \mathcal T(n)^{-1} = \alpha_n(a)  \text{ and } \mathcal T(s n s^{-1}) = \alpha_s(\mathcal T(n)), \quad \text{ for all } n \in N,  s \in G, a \in A.\]
\end{definition}

Proposition 5.1 of \cite{Packer89} shows that the associated {\bf twisted covariance algebra} $C^*(G,A, \mathcal T)$, defined and studied in \cite{Green80}, can also be realized as a twisted crossed product $A \rtimes_{\beta, \omega} G/N$, where (if $c: G/N \to G$ is a Borel cross-section)
\[ \beta_s = \alpha_{c(s)} \quad \text{ and } \quad \omega(s,t) = \mathcal{T}(c(s) c(t) c(st)^{-1}).\]

\subsection{Quasidiagonality, decomposition rank and $\mathcal{Z}$-stability} \label{sec:drZstab}

\subsubsection{Quasidiagonality} Our interaction with quasidiagonality will be minimal.  We only use easy-to-prove permanence properties and the quasidiagonality results from \cite{Eckhardt14}.  We state the definition and refer the interested reader to Brown and Ozawa \cite{Brown08} for more information. 

Given a separable Hilbert space $\mh$, we say that a subset $\mathcal S \subseteq B(\mh)$  is {\bf quasidiagonal}  if there is a sequence of finite rank orthogonal projections $(P_n)_{n\in \N} \subseteq B(\mh)$ such that $\lim_{n \to \infty} P_n(\xi) = \xi$  for all $\xi \in \mh$ and
\[\lim_{n \to \infty} \|P_nT-TP_n\|=0, \text{ for all } T\in \mathcal S.\] 
A representation $(\pi,\mh)$ of a C*-algebra $A$ is quasidiagonal if $\pi(A)$ is a quasidiagonal set of operators. A C*-algebra is quasidiagonal if it admits a faithful quasidiagonal representation. A C*-algebra  $A$ is {\bf strongly quasidiagonal} \cite{Hadwin87} if every representation of $A$ is quasidiagonal.

\subsubsection{Decomposition rank} Like its cousin (nuclear dimension), decomposition rank is a C*-algebraic generalization of topological covering dimension.
Even though the main result of this paper proves that certain C*-algebras have finite decomposition rank, we do not actually work with the definition  of decomposition rank.  Instead we prove structure theorems about our algebras so that decomposition rank is easy to deduce from known results.  Therefore we elect not to recall the lengthy definition and instead refer the reader to Kirchberg and Winter's paper on decomposition rank \cite{Kirchberg04} as well as Winter and Zacharias's paper on nuclear dimension \cite{Winter10} for more details. The relevant details for this paper are as follows.

Let $A$ be a  C*-algebras and let $\dr(A)$ denote the \textbf{decomposition rank} of $A$ as defined in \cite{Kirchberg04}. Let $J$ be an ideal of $A.$  Then by \cite[(3.3)]{Kirchberg04} we obtain
\begin{equation}
\dr(A/J)\leq \dr(A). \label{eq:qdr}
\end{equation}
For a collection of C*-algebras $A_1,...,A_n$ we have by \cite[(3.1)]{Kirchberg04}
\begin{equation}
\dr(\oplus_{i=1}^n A_i)=\max\{ \dr(A_i):1\leq i\leq n \}. \label{eq:drsum}
\end{equation}
For any $n\geq1$ we have by \cite[Corollary 3.9]{Kirchberg04}
\begin{equation}
\dr(M_n\otimes A)=\dr(A). \label{eq:drstable}
\end{equation}

\subsubsection{$\mathcal{Z}$-stability} Another important regularity property fundamental to our investigations is $\mathcal{Z}$-stability. The Jiang-Su algebra $\mathcal Z$ was introduced in \cite{Jiang99}; a C*-algebra $A$ is called $\mathcal{Z}$-\textbf{stable} if it absorbs the Jiang-Su algebra $\mathcal{Z}$ tensorially, i.e. $A\otimes \mathcal{Z}\cong A.$  As with decomposition rank, in this paper we do not actually work with the structure of $\mathcal{Z}$.  Instead we prove structure theorems that make $\mathcal{Z}$-stability easy to deduce from known results.  We refer the reader to the survey article \cite{Elliott08} for more information about $\mathcal{Z}$ and its role in the Elliott classification program.

In Theorem \ref{thm:mainirr}, we prove $\mathcal{Z}$-stability of certain twisted crossed product C*-algebras by an argument that is routine to those working in the classification theory of nuclear C*-algebras. We recall the relevant details.
\begin{proposition}[\textup{\cite[Proposition 7.2.2]{Rordam02}}] \label{prop:intertwine} Let $A$ be a separable C*-algebra and $\mathcal{U}$ be a non-principal ultrafilter on $\N.$ Let $A^\mathcal{U}$ be the C*-ultrapower of $A$ 
and consider $A\hookrightarrow A^\mathcal{U}$ as the diagonal embedding.  Then $A\otimes \mathcal{Z}\cong A$  if and only if there is an embedding of $\mathcal{Z}$ into $A^\mathcal{U}\cap A'.$
\end{proposition}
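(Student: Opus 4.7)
The plan is to prove this biconditional in two directions, with ``$\Rightarrow$'' being straightforward and ``$\Leftarrow$'' carrying the real content.

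For the forward direction, assume $A \cong A \otimes \mathcal{Z}$. Iterating the isomorphism yields $A \cong A \otimes \mathcal{Z}^{\otimes \infty}$. For each $n \in \N$, let $\sigma_n \colon \mathcal{Z} \to \mathcal{Z}^{\otimes \infty}$ be the unital embedding onto the $n$-th tensor factor; composing $1_A \otimes \sigma_n$ with the isomorphism $A \otimes \mathcal{Z}^{\otimes \infty} \cong A$ produces $*$-homomorphisms $\rho_n \colon \mathcal{Z} \to A$. For any fixed $a \in A$ and finite $F \subset \mathcal{Z}$, a standard approximation argument (using that elements of the infinite tensor product are norm-approximable by elements supported on finitely many tensor factors) shows that $\max_{z \in F} \|[\rho_n(z), a]\| \to 0$ as $n \to \infty$. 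Hence $(\rho_n(z))_{n \in \N}$ represents an element of $A^\mathcal{U} \cap A'$, and the assignment $z \mapsto [(\rho_n(z))_n]$ is a unital $*$-homomorphism, which is automatically injective by simplicity of $\mathcal{Z}$.

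For the backward direction, suppose $\iota \colon \mathcal{Z} \to A^\mathcal{U} \cap A'$ is a unital embedding. Since $\iota(\mathcal{Z})$ commutes with $A \subseteq A^\mathcal{U}$ and $\mathcal{Z}$ is nuclear, the pair of maps $A \hookrightarrow A^\mathcal{U}$ and $\iota$ combine to give a $*$-homomorphism $\mu \colon A \otimes \mathcal{Z} \to A^\mathcal{U}$ extending $a \otimes 1 \mapsto a$. The goal is then to run Elliott's two-sided approximate intertwining between $A$ and $A \otimes \mathcal{Z}$. The forward maps are copies of $a \mapsto a \otimes 1_\mathcal{Z}$, and the backward maps are obtained by lifting $\mu$ along the quotient $\ell^\infty(A) / c_\mathcal{U}(A) = A^\mathcal{U}$: for any prescribed finite set and tolerance one chooses a representative sequence, yielding an approximate $*$-homomorphism $A \otimes \mathcal{Z} \to A$ that is approximately a left inverse to $a \mapsto a \otimes 1$ on the prescribed finite set. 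Countable saturation of $A^\mathcal{U}$, combined with the fact that $\mathcal{Z}$ is strongly self-absorbing (so $\mathcal{Z} \otimes \mathcal{Z} \cong \mathcal{Z}$ via an approximately inner isomorphism), lets us reindex to supply fresh, mutually commuting copies of $\mathcal{Z}$ to feed into the next stage of the intertwining.

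The main obstacle is orchestrating the backward direction cleanly: one must produce an approximate intertwining diagram coherent enough that Elliott's gluing procedure converges to an actual $*$-isomorphism, and this in turn requires that after choosing finitely many approximation data at some stage, one still has access to a copy of $\mathcal{Z}$ inside $A^\mathcal{U}$ that commutes not just with $A$ but with all images produced so far. The strongly self-absorbing property of $\mathcal{Z}$ is exactly what enables the reindexing needed to supply such commuting copies at each step; absent that feature, the bare existence of one embedding $\mathcal{Z} \to A^\mathcal{U} \cap A'$ would not be enough to run the intertwining.
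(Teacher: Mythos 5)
The paper never proves this proposition: it is quoted verbatim from R{\o}rdam's book, so the only proof to compare against is the standard one found there (and in Toms--Winter's treatment of strongly self-absorbing algebras), which your outline closely follows. Your forward direction is that argument and is essentially complete: writing $A \cong A \otimes \mathcal{Z}^{\otimes\infty}$ (note this uses the known fact $\mathcal{Z} \cong \mathcal{Z}^{\otimes\infty}$ rather than mere iteration of the isomorphism, which only gives finite tensor powers), the far-out factor embeddings are asymptotically central, they pass to a unital $*$-homomorphism into $A^{\mathcal{U}} \cap A'$, and injectivity follows from simplicity of $\mathcal{Z}$.

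The backward direction, however, has a genuine gap. The two-sided intertwining you describe cannot close with the maps you specify: with forward maps $\varphi(a) = a \otimes 1_{\mathcal{Z}}$ and backward maps $\psi$ taken as finite-stage lifts of $\mu$, the composition $\psi \circ \varphi$ is indeed approximately the identity on $A$, but $\varphi \circ \psi$ sends $x \in A \otimes \mathcal{Z}$ to (approximately) $\psi(x) \otimes 1_{\mathcal{Z}}$, which lies in $A \otimes 1_{\mathcal{Z}}$ and hence is nowhere near $x$ unless $x$ was already close to that subalgebra. No choice of lifts, and no supply of fresh commuting copies of $\mathcal{Z}$, repairs this by itself: the maps themselves must be twisted by unitaries. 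That is exactly the content of R{\o}rdam's proof, and it is the step your sketch names but never carries out. Concretely, one uses the approximately inner half-flip of $\mathcal{Z}$ (the two factor embeddings $\mathcal{Z} \rightrightarrows \mathcal{Z} \otimes \mathcal{Z}$ are approximately unitarily equivalent), transported via the given embedding and countable saturation of the ultrapower, to manufacture unitaries $u_n \in A \otimes \mathcal{Z}$ that asymptotically commute with $A \otimes 1_{\mathcal{Z}}$ and conjugate any fixed finite subset of $A \otimes \mathcal{Z}$ arbitrarily close to $A \otimes 1_{\mathcal{Z}}$; one then invokes a \emph{one-sided} Elliott intertwining (R{\o}rdam's Proposition 2.3.5) to conclude that $\varphi$ is approximately unitarily equivalent to an isomorphism. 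Your proposal correctly lists the ingredients -- nuclearity of $\mathcal{Z}$ to build $\mu$, saturation, reindexing, strong self-absorption -- but stops precisely where they must be assembled, so as written it does not yet establish the implication.
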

As an immediate corollary we obtain the form of the argument that we will use in this paper. 
\begin{proposition} \label{prop:Zstablefixedpoint} Let $A$ be a separable $\mathcal{Z}$-stable C*-algebra. Let $G$ be a countable amenable group with a twisted action $(\alpha,\omega)$ on $A.$  Let $\mathcal{U}$ be a non-principal ultrafilter on 
$\N.$ The twisted action of $G$ on $A$ induces a canonical diagonal twisted action of $G$ on $A^\mathcal{U}.$  Let $A^\mathcal{U}_\alpha=\{ a\in A^\mathcal{U}: \alpha_s(a)=a, \textrm{ for all }s\in G \}.$ If there is an embedding of $\mathcal{Z}$ into $A^\mathcal{U}_\alpha\cap A'$, then the twisted crossed product $A\rtimes_{\alpha,\omega} G$ is $\mathcal{Z}$-stable. In particular, if each $\alpha_s$ defines an inner automorphism of $A,$ then $A\rtimes_{\alpha,\omega}G$ is $\mathcal{Z}$-stable.
\end{proposition}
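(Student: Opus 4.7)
The plan is to apply Proposition \ref{prop:intertwine} to $B := A \rtimes_{\alpha,\omega} G$, so the task reduces to producing an embedding of $\mathcal{Z}$ into $B^\mathcal{U} \cap B'$. Since $G$ is countable amenable (and in particular the full and reduced twisted crossed products agree), we may view $A \subset B$ in the usual way, with canonical unitaries $u_s \in M(B)$ satisfying $u_s a u_s^* = \alpha_s(a)$ for $a \in A$ and $u_s u_t = \omega(s,t) u_{st}$; these together with $A$ generate $B$. Consequently $A^\mathcal{U} \subset B^\mathcal{U}$, and the constant sequences $(u_s)_\mathcal{U}$ implement the diagonal twisted action of $G$ on $A^\mathcal{U}$.

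The key observation is the inclusion $A^\mathcal{U}_\alpha \cap A' \subset B^\mathcal{U} \cap B'$. Indeed, for $a \in A^\mathcal{U}_\alpha$, the identity $\alpha_s(a) = a$ in $B^\mathcal{U}$ becomes $u_s a u_s^* = a$, i.e.\ $a$ commutes with each $u_s$; if in addition $a \in A'$, then $a$ commutes with a set of generators of $B$, hence $a \in B'$. Therefore any embedding $\mathcal{Z} \hookrightarrow A^\mathcal{U}_\alpha \cap A'$ provides an embedding $\mathcal{Z} \hookrightarrow B^\mathcal{U} \cap B'$, and Proposition \ref{prop:intertwine} completes the first assertion.

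For the ``in particular'' statement, suppose each $\alpha_s = \textup{Ad}\, v_s$ for some unitary $v_s \in M(A)$. Since $A$ is separable and $\mathcal{Z}$-stable, Proposition \ref{prop:intertwine} supplies an embedding $\iota: \mathcal{Z} \hookrightarrow A^\mathcal{U} \cap A'$. For any $a \in A'$ and any $s \in G$, the element $a$ commutes with $v_s \in M(A) \subset (A^\mathcal{U})''$, so $\alpha_s(a) = v_s a v_s^* = a$; that is, $\iota$ lands inside $A^\mathcal{U}_\alpha \cap A'$. The first part of the proposition then yields $\mathcal{Z}$-stability of $A \rtimes_{\alpha,\omega} G$.

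There is no serious obstacle here: the argument is a direct unwinding of the definitions. The only subtlety worth flagging is that the 2-cocycle $\omega$ plays no role in the verification that fixed points of $\alpha$ commute with the $u_s$, since that identity is independent of the multiplication rule among the $u_s$'s; likewise, the non-unital case is handled by taking $v_s$ in $M(A)$ and using that $M(A)$ sits inside the relative commutant computation at the ultrapower level.
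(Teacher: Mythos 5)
Your proof of the main implication is correct and follows the paper's route exactly: the entire content is the inclusion $A^\mathcal{U}_\alpha \cap A' \subseteq B^\mathcal{U} \cap B'$ for $B = A\rtimes_{\alpha,\omega}G$, which you verify by checking commutation against the generators $a u_s$ of $B$ (commutation with $u_s$ being exactly the fixed-point hypothesis, so you are right that the cocycle plays no role), followed by Proposition \ref{prop:intertwine} applied to the separable algebra $B$. The appeal to amenability is unnecessary -- $A$ always embeds faithfully in the full twisted crossed product when $G$ is discrete -- but harmless.

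The ``in particular'' clause, however, contains a false step, located precisely in the place you flagged as a subtlety. If $v_s$ is merely a multiplier unitary, it is \emph{not} true that every $a\in A^\mathcal{U}\cap A'$ commutes with $v_s$: the assertion that $M(A)$ sits inside $(A^\mathcal{U})''$ in a way compatible with the coordinatewise action of $M(A)$ on $A^\mathcal{U}$ fails, because $A$ acts degenerately on $A^\mathcal{U}$, so there is no strict-to-weak continuity available to pass from commutation with $A$ to commutation with $M(A)$. Concretely, let $A=\mathcal{K}(\ell^2(\mathbb{Z}))\otimes\mathcal{Z}$ (separable and $\mathcal{Z}$-stable), let $U$ be the bilateral shift, and let $a\in A^\mathcal{U}$ be the image of the bounded sequence $(e_{nn}\otimes 1)_{n\in\N}$, where $e_{nn}$ are the diagonal matrix units. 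Since $\|[e_{nn},k]\|\to 0$ for every compact $k$, we get $a\in A^\mathcal{U}\cap A'$; but $\|[e_{nn},U]\|=1$ for every $n$, so $\alpha_s(a)\neq a$ when $\alpha_s=\textup{Ad}(U\otimes 1)$. Thus the inclusion $A^\mathcal{U}\cap A'\subseteq A^\mathcal{U}_\alpha\cap A'$ genuinely fails for multiplier-inner actions, and your argument for the second claim is valid only when the implementing unitaries lie in $A$ itself (or its minimal unitization) -- for instance when $A$ is unital. That is the reading the paper intends (its own one-line proof of this implication makes the same implicit assumption), and it suffices for every application in the paper, since there $A=C^*_{\pi_\tau}(N)$ is unital. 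So the first claim stands as proved, but the ``in particular'' statement should be read with $v_s\in U(A)$ (or $A$ unital); your proposed non-unital fix does not work as written, and whether the conclusion survives for multiplier-inner actions would require a different argument.
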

\begin{proof} To see the first implication one notices that $A_\alpha^\mathcal{U}\cap A'\subseteq (A\rtimes_{\alpha,\omega}G)^\mathcal{U}\cap (A\rtimes_{\alpha,\omega}G)'.$ The conclusion then follows from Proposition \ref{prop:intertwine}.  For the second implication, observe that $A^\mathcal{U}\cap A'\subseteq A_\alpha^\mathcal{U}\cap A'$ in this case, and apply  the first claim.
\end{proof}
The following theorem is a culmination of years of work by many mathematicians.  We detail attribution in the proof for the interested reader.
\begin{theorem}[\cite{Bosa16}] \label{thm:manyequivalence}  Let $A$ be a simple, infinite-dimensional, quasidiagonal, nuclear separable unital C*-algebra with a unique trace.  Then the following are equivalent:
\begin{enumerate}
\item $\dr(A)\leq 1$.
\item $A$ has finite nuclear dimension.
\item $A$ is $\mathcal{Z}$-stable.
\end{enumerate}
\end{theorem}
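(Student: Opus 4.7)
The plan is to prove the chain $(1) \Rightarrow (2) \Rightarrow (3) \Rightarrow (1)$, in each case invoking a known result from the Toms--Winter program rather than arguing from scratch.

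The implication $(1) \Rightarrow (2)$ is immediate from the definitions in \cite{Kirchberg04} and \cite{Winter10}: $\dimnuc(A)\leq\dr(A)$ for every C*-algebra, so $\dr(A)\leq 1$ forces finite nuclear dimension. For $(2)\Rightarrow(3)$ I would cite Winter's theorem that, for simple, separable, unital, non-elementary nuclear C*-algebras, finite nuclear dimension implies $\mathcal{Z}$-stability; the hypotheses of our theorem match verbatim. So both of these implications are package-level citations, and the mathematical content of the statement is concentrated in $(3)\Rightarrow(1)$.

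For the remaining implication I would follow the strategy of the Bosa--Brown--Sato--Tikuisis--White--Winter paper \cite{Bosa16}. The proof assembles three main ingredients. First, the Matui--Sato machinery (developed in \cite{Matui14a} and its predecessors) shows that a simple, separable, unital, nuclear, $\mathcal{Z}$-stable C*-algebra with unique trace has strict comparison and property (SI); these are the structural tools one needs at the level of the uniform tracial ultrapower. Second, since $A$ is quasidiagonal and its unique trace is automatically amenable, that trace is a quasidiagonal trace in the sense of Brown, so the Tikuisis--White--Winter quasidiagonality theorem produces asymptotic finite-dimensional approximations of the identity that are almost trace preserving. Third, one runs a two-coloured version of Winter's covering-dimension argument: property (SI) and strict comparison let one excise two almost-orthogonal pieces from such approximations and reassemble them into a pair of finite-dimensional order-zero maps which, together, witness $\dr(A)\leq 1$.

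The main obstacle is exactly this last step: producing decomposition rank at most one, rather than merely finite nuclear dimension, requires that the two colours of order-zero maps be arranged to be almost trace-preserving and almost ``tracially orthogonal'', and this is what consumes most of the work in \cite{Bosa16}. The uniqueness of the trace makes this tractable in our setting, because one only has to control a single tracial state (giving the bound $2\cdot 0 +1 = 1$) rather than track the whole simplex $T(A)$; in the general case of this theorem the bound on $\dr(A)$ would depend on the topological dimension of $\partial_e T(A)$, which is zero here by hypothesis.
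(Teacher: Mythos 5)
Your proposal has the same skeleton as the paper's proof, which is purely a citation chain: $(1)\Rightarrow(2)$ from the general inequality $\text{dim}_{\text{nuc}}(A)\leq \dr(A)$, $(2)\Rightarrow(3)$ from Winter's theorem \cite{Winter12}, and $(3)\Rightarrow(1)$ from \cite[Theorem 7.5]{Bosa16}, with Matui--Sato's earlier bound $\dr(A)\leq 3$ \cite[Theorem 1.1]{Matui14} cited as the precursor. Where you go beyond the paper is in sketching the internals of the Bosa et al.\ argument, and that sketch contains one step that, as written, does not work.

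The problem is your appeal to the Tikuisis--White--Winter quasidiagonality theorem \cite{Tikuisis17}. That theorem promotes amenable (faithful) traces to quasidiagonal traces only for nuclear C*-algebras satisfying the universal coefficient theorem, and the UCT is not among the hypotheses here; indeed, for the very algebras to which this paper applies Theorem \ref{thm:manyequivalence}, the UCT is exactly the open problem recorded in Question \ref{ques:uct}, so any route through TWW would be circular for the paper's purposes. Fortunately TWW is not needed: quasidiagonality of $A$ itself, together with uniqueness of the trace, yields quasidiagonality of the trace by an elementary argument. Voiculescu's characterization of quasidiagonality provides u.c.p.\ maps $\varphi_n:A\rightarrow M_{k_n}$ which are asymptotically multiplicative and asymptotically isometric; any weak-$*$ cluster point of the states $\textup{tr}_{k_n}\circ\varphi_n$ is a tracial state by asymptotic multiplicativity, hence equals the unique trace $\tau$, and this exhibits $\tau$ as a quasidiagonal trace in Brown's sense. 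This verifies the tracial hypothesis of \cite[Theorem 7.5]{Bosa16} with no mention of the UCT, after which that theorem can be quoted as a black box---which is precisely what the paper does. A minor further correction: your closing remark that, in the general case, the bound on $\dr(A)$ would depend on the dimension of $\partial_e T(A)$ misreads \cite{Bosa16}; there the conclusion is $\dr(A)\leq 1$ whenever $\partial_e T(A)$ is compact and finite-dimensional, so the boundary dimension enters the hypotheses of the theorem, not the bound.
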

\begin{proof}  We always have $\dr(A) \geq \text{dim}_{\text{nuc}}(A)$, so the first implication is trivial.  Winter proved that (2) implies (3) in \cite{Winter12} without the extra assumptions in our statement.  Matui and Sato proved in \cite[Theorem 1.1]{Matui14} that $\mathcal{Z}$-stability, together with the hypotheses of the current theorem, implies $\dr(A) \leq 3$.  Finally, Bosa et al.~built on the work of  Matui and Sato to prove, in \cite[Theorem 7.5]{Bosa16}, that (3) implies (1) under  hypotheses on the tracial state space that include the unique trace case.
\end{proof}

Combining the referenced results of Sections \ref{sec:repfacts} and \ref{sec:drZstab} with \cite{Eckhardt14b} we obtain
\begin{theorem}\label{thm:nildr1} Let $G$ be a finitely generated nilpotent group and $\pi$ an irreducible representation of $G$.  Then $\dr(C^*_\pi(G))\leq 1.$
\end{theorem}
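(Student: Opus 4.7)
The plan is to apply Theorem \ref{thm:manyequivalence} to $A = C^*_\pi(G)$. So I would check each hypothesis of that theorem in turn, reducing first to the infinite-dimensional case. If $\pi$ is finite-dimensional, then $C^*_\pi(G)$ is a unital simple quotient of $C^*(G)$ that is finite-dimensional, hence a matrix algebra, so $\dr(C^*_\pi(G))=0$ trivially. Thus I may assume $H_\pi$ is infinite-dimensional, in which case $C^*_\pi(G)$ is also infinite-dimensional.

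Next I would verify the remaining hypotheses. For simplicity: since $\pi$ is irreducible, $\ker\pi$ is a primitive ideal of $C^*(G)$; by Theorem \ref{thm:T1} every primitive ideal of $C^*(G)$ is maximal, so $C^*_\pi(G) = C^*(G)/\ker\pi$ is simple. It is unital, separable, and nuclear because $G$ is a countable amenable group. For uniqueness of trace, let $\tau_0$ be the unique extreme trace on $C^*(G)$ associated with $\ker\pi$ via Theorem \ref{thm:T1}, and let $\bar\tau_0$ be its descent to $C^*_\pi(G)$. Given any trace $\tau$ on $C^*_\pi(G)$, lift it to $\tilde\tau$ on $C^*(G)$; then $\ker\pi_{\tilde\tau}$ is a proper ideal containing $\ker\pi$, so equals $\ker\pi$ by maximality. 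Hence $\ker\pi_{\tilde\tau}$ is primitive, and by the implication ``primitive $\Rightarrow$ extreme'' in Theorem \ref{thm:T1}, $\tilde\tau$ is extreme, so $\tilde\tau=\tau_0$ and $\tau=\bar\tau_0$. For quasidiagonality: by \cite{Eckhardt14}, every representation of a solvable virtually nilpotent group is quasidiagonal, and finitely generated nilpotent groups certainly qualify, so $\pi$ is QD and therefore so is $C^*_\pi(G)$. For finite nuclear dimension: by \cite{Eckhardt14b} (or the refinement in Section \ref{sec:finitedrnilpotent}) $C^*(G)$ has finite decomposition rank, and by \eqref{eq:qdr} this passes to the quotient $C^*_\pi(G)$, which therefore has finite nuclear dimension.

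With all hypotheses of Theorem \ref{thm:manyequivalence} verified, I conclude $\dr(C^*_\pi(G))\leq 1$. There is no substantive obstacle in this proof; it is an assembly argument combining Theorem \ref{thm:T1} (for simplicity and unique trace), the quasidiagonality of \cite{Eckhardt14}, and the finite decomposition rank of \cite{Eckhardt14b}, into the equivalence machinery of Theorem \ref{thm:manyequivalence}. The only slightly subtle point is extracting uniqueness of trace on the simple quotient from the uniqueness of \emph{extreme} trace on $C^*(G)$, which is handled by the maximality of $\ker\pi$ together with the converse implication in Theorem \ref{thm:T1}.
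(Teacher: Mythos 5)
Your proof is correct and follows essentially the same route as the paper's: verify the hypotheses of Theorem \ref{thm:manyequivalence} using Theorem \ref{thm:T1} (simplicity and unique trace), \cite{Eckhardt14} (quasidiagonality), and \cite{Eckhardt14b} with \eqref{eq:qdr} (finite nuclear dimension). Your added care about the finite-dimensional case and the explicit lifting argument for uniqueness of trace are details the paper leaves implicit, but they do not constitute a different approach.
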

\begin{proof} By Theorem \ref{thm:T1}, the C*-algebra $C^*_\pi(G)$ is simple and has a unique trace.  By \cite{Eckhardt14} it is quasidiagonal and by \cite{Eckhardt14b} it has finite nuclear dimension.  Therefore by Theorem \ref{thm:manyequivalence} we obtain $\dr(C^*_\pi(G))\leq 1.$
\end{proof}

\section{The structure of $C^*(G)$}
\label{sec:structure}
There are several key steps in our  proof of  Theorem \ref{thm:main} below, that $C^*(G)$ has finite decomposition rank  whenever $G$ is virtually nilpotent.  
First, we write $C^*(G)$ as a continuous field whose fiber algebras can be described in two ways: (i) as twisted crossed products of the form $C^*_\pi(N)\rtimes_{\alpha,\omega}G/N$, for certain representations $\pi$ of a normal subgroup $N\nor G$ (Proposition \ref{prop:cts-field}); and (ii) as C*-algebras of the form  $C^*_\pi(H) \otimes M_{H/N}$ for some irreducible representation $\pi$ of $G$ and certain finite index subgroups $H, N$ of $G$ (Proposition \ref{prop:Greenimp}).
Second, a careful analysis of the relationship between irreducible representations of $C^*(G)$ and those of $C^*(N)$, for $N \nor G$ a finite index subgroup, enables us to show $C^*_\pi(G)$ is quasidiagonal for every irreducible representation $\pi$ of $G$.  Third, the dual nature of the fibers described above in (i) and (ii) allow us to use quasidiagonality and results of Matui and Sato to establish $\mathcal{Z}$-stability of the simple fibers of $C^*(G)$ and therefore bound their decomposition rank by 1. Finally, Section \ref{sec:finitedrnilpotent} establishes that $\dr(C^*(N)) < \infty$ when $N$ is nilpotent; then, arguments similar to those employed in \cite{Eckhardt14b} finish the proof of Theorem \ref{thm:main}.
Thus, we begin with this structural analysis of $C^*(G)$.
\subsection{Continuous field structure} \label{sec:ctsfield}  
Recall (cf.~\cite[IV.1.6.]{Blackadar06} or \cite[Appendix C]{Williams08}) that  a C*-algebra $A$ is a \textbf{continuous field} over a topological space $X$ if there is a continuous open map $\Psi: \text{Prim}\,A \to X$.  By the Dauns-Hofmann Theorem, such a map induces an embedding $\Phi: C_0(X) \to ZM(A) \cong C^b(\text{Prim}\, A)$. The {\bf fiber algebra} $A_x$ of $A$ at $x \in X$ is $A/I_x$, where 
\[ I_x = \overline{\text{span}} \{\Phi( f) a: f \in C_0(X \backslash x), a \in A\}.\]

We recall some  classical results about group $C^*$-algebras and the various flavors of twisted crossed product $C^*$-algebras. 
Let $G$ be a countable discrete group with $N\nor G$, and fix a choice $c: G/N \to G$ of coset representatives with $c(e) = e$.
The Corollary to \cite[Proposition 1]{Green80} tells us that $C^*(G)$ can be viewed as the $C^*$-algebra of the (trivial) twisted covariant system
$(G, C^*(N), \iota)$, where  $G$ acts on $C^*(N)$ by conjugation and $\iota: N \to C^*(N)$ is the usual embedding. (If $N$ is amenable, as will be the case in our applications below, $\iota$ takes $n\in N$ to the associated generator $u_n$ of $C^*(N)$.)
Thus,  \cite[Proposition 5.1]{Packer89} establishes that $C^*(G)$ can also be understood as a twisted crossed product (Definition \ref{def:twisted-xprod}):
\[ C^*(G) \cong C^*(G, C^*(N), \iota) \cong C^*(N) \rtimes_{ \alpha, \omega} G/N,\]
where the action $\alpha$ of $G/N$ and the 2-cocycle $\omega: G/N \times G/N \to C^*(N)$ are given by
\begin{equation}
\alpha_s (a) = \iota({c(s)}) \, a \,\iota(c(s)^{-1}), \qquad \omega(s, t) = \iota(c(s) c(t)  c(st)^{-1}).\label{eq:omegadef}
\end{equation}
This picture of $C^*(G)$ as a twisted crossed product is fundamental to our understanding of the structure of its primitive quotients.

The universal property of $C^*(G, C^*(N), \iota)$ (as established in  \cite[Section 5]{Packer89}) implies that the isomorphism class of the twisted crossed product is independent of $c$.

By  \cite[Theorem 1.2]{Packer92}, $C^*(N)$ is a continuous field over $\widehat{Z(N)}$.  The fiber algebra at $\gamma \in \widehat{Z(N)}$ is the twisted group $C^*$-algebra $C^*(N/Z(N), \sigma_\gamma)$, where 
\[ \sigma_\gamma([n], [m]) = \gamma(\iota (\tilde n \tilde m \widetilde{nm}^{-1})).\]
Here $\tilde n \in N$ indicates a lift of $[n] \in N/Z(N)$.
Observe also (see the remarks at the beginning of  \cite[Section 4]{Eckhardt16}) that 
$C^*(N/Z(N), \sigma_\gamma) $ is isomorphic to $C^*_{\pi_{\overline \gamma}}(N)$, the GNS representation of $N$ with respect to the trace $\overline \gamma$ given by extending $\gamma \in \widehat{Z(N)}$ to a trace on $N$ which is zero outside of $Z(N)$.

\begin{definition}
Let $\widetilde Z$ denote the quotient of $\widehat{Z(N)}$ by the induced action $\hat \alpha$ of $G/N$: $\hat \alpha_s(\gamma)(z) = \gamma(\alpha_s(z))$. For $\gamma\in \widehat{Z(N)}$ we let $[\gamma]$ denote its equivalence class in $\widetilde Z.$
\label{def:cts-field-base}
\end{definition}
 The following proposition is a straightforward consequence of Nilsen's work \cite{Nilsen96} on twisted crossed products of continuous fields.

\begin{proposition}
\label{prop:cts-field}
Let $G$ be a discrete group, $N \nor G$, and $\widetilde{Z}$ be the quotient of $\widehat{Z(N)}$ from Definition \ref{def:cts-field-base}.  Then $C^*(G)$ is a continuous field over $\widetilde{Z}$.  The fiber over $[\gamma] \in \widetilde{Z}$ is given by 
\[C^*(G)_{[\gamma]} \cong C^*(N)_{[\gamma]} \rtimes_{\tilde \alpha, \tilde{\omega}} G/N,\]
where $C^*(N)_{[\gamma]} \cong \bigoplus_{\eta \in [\gamma]} C^*(N/Z(N), \sigma_\eta) \cong \bigoplus_{\eta\in [\gamma]} C^*_{\pi_{\overline \eta}}(N)$, and (denoting by $[u_n]_\eta$ the image of $n \in N$ in $C^*(N/Z(N), \sigma_\eta) \cong  C^*_{\pi_{\overline \eta}}(N)$)
\begin{equation}
\tilde{\alpha}_r([u_n]_\eta) = [ u_{c( r) n c( r)^{-1}}]_{\hat \alpha_r(\eta)}; \qquad \tilde \omega( r, s)= \bigoplus_{\eta \in [\gamma]} [\omega(r,s)]_{\eta}
\label{eq:cts-field-action}
\end{equation}
for a choice $c: G/N \to G$ of coset representatives.
\end{proposition}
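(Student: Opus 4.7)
The plan is to bootstrap from the known continuous field structure of $C^*(N)$ and then apply Nilsen's machinery from \cite{Nilsen96} to the twisted crossed product description $C^*(G) \cong C^*(N) \rtimes_{\alpha,\omega} G/N$ recorded earlier in this section. Recall from \cite[Theorem 1.2]{Packer92} that $C^*(N)$ is a $C_0(\widehat{Z(N)})$-algebra via the inclusion $C^*(Z(N)) \hookrightarrow ZM(C^*(N))$ together with Fourier duality, and that the fiber at $\gamma \in \widehat{Z(N)}$ is $C^*(N/Z(N),\sigma_\gamma) \cong C^*_{\pi_{\overline \gamma}}(N)$. Since $Z(N)$ is characteristic in $N$ and $N \nor G$, the conjugation action of $G$ preserves $Z(N)$ and $N$ acts trivially on it; hence $\alpha$ restricts to an action of $G/N$ on $C^*(Z(N))$, which is Fourier-dual to the action $\hat\alpha$ on $\widehat{Z(N)}$ from Definition \ref{def:cts-field-base}.

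Next I would verify the compatibility condition needed by Nilsen: for $f \in C_0(\widehat{Z(N)})$, $a \in C^*(N)$, and $r \in G/N$, one has $\alpha_r(f \cdot a) = (f \circ \hat\alpha_r^{-1}) \cdot \alpha_r(a)$. This is immediate from the fact that $\alpha_r$ is multiplicative and its restriction to the central subalgebra $C^*(Z(N))$ is the automorphism dual to $\hat\alpha_r$. The 2-cocycle $\omega$ from \eqref{eq:omegadef} takes values in $\iota(N) \subseteq C^*(N)$, so it is already ``inside'' the coefficient algebra. With these ingredients in hand, Nilsen's theorem \cite{Nilsen96} delivers the fact that the twisted crossed product $C^*(N) \rtimes_{\alpha,\omega} G/N \cong C^*(G)$ is a continuous $C_0(\widetilde Z)$-algebra, since the quotient map $q \colon \widehat{Z(N)} \to \widetilde Z$ is continuous and open (the action being by homeomorphisms).

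It remains to identify the fibers. The orbit $[\gamma] \subseteq \widehat{Z(N)}$ is closed (the applications of interest have $G/N$ finite, so orbits are finite sets), and the restriction of the $C_0(\widehat{Z(N)})$-algebra $C^*(N)$ to this closed subset is the direct sum $\bigoplus_{\eta \in [\gamma]} C^*(N)_\eta \cong \bigoplus_{\eta \in [\gamma]} C^*(N/Z(N),\sigma_\eta)$. Nilsen's description of the fibers of a crossed-product continuous field identifies $C^*(G)_{[\gamma]}$ with the twisted crossed product of this direct sum by $G/N$. The induced action and cocycle are then read off by pushing $\alpha$ and $\omega$ through the quotient map $C^*(N) \twoheadrightarrow C^*(N)_{[\gamma]}$: since $\alpha_r$ sends $f \cdot a$ (for $f$ vanishing outside a neighborhood of $\eta$) to something vanishing outside a neighborhood of $\hat\alpha_r(\eta)$, it must permute the summands by $\hat\alpha_r$, yielding $\tilde\alpha_r([u_n]_\eta) = [u_{c(r) n c(r)^{-1}}]_{\hat\alpha_r(\eta)}$; and $\omega(r,s) \in C^*(N)$ maps component-wise to $\bigoplus_{\eta \in [\gamma]} [\omega(r,s)]_\eta$.

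The main obstacle is mostly bookkeeping: one must translate the continuous $C_0(X)$-algebra language of Nilsen into the fiber language of Proposition \ref{prop:cts-field} and confirm that the twist $\omega$, which originally lives in the multiplier algebra in Packer-Raeburn's general setup, actually sits inside the coefficient algebra here so that reduction to fibers poses no difficulty. A secondary subtlety, relevant only outside the finite-index setting, is that the direct-sum description of the fiber requires closedness of the orbit $[\gamma]$; for the virtually nilpotent groups targeted by this paper this is automatic, so no delicate topological argument is needed.
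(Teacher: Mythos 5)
Your proposal is correct and takes essentially the same route as the paper's proof: both pass from Packer--Raeburn's continuous field structure of $C^*(N)$ over $\widehat{Z(N)}$ through the continuous open quotient map to $\widetilde Z$, identify the fiber of $C^*(N)$ over an orbit as a finite direct sum (you via restriction to the finite closed orbit, the paper via coprimality of the point ideals and the Chinese remainder theorem --- the same fact in different clothing), and then invoke Nilsen's results on twisted crossed products of $C_0(X)$-algebras, using that the induced $G/N$-action on $\widetilde Z$ is trivial, to obtain the $C_0(\widetilde Z)$-structure on $C^*(G) \cong C^*(N)\rtimes_{\alpha,\omega} G/N$ and to read off the fiber formulas \eqref{eq:cts-field-action}.
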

\begin{proof}
The map $\widehat{Z(N)} \to \widetilde Z$ is a quotient map, hence continuous and open.  Consequently, $C^*(N)$ also has the structure of a continuous field over $\widetilde Z$.  In fact, the fiber algebra of $C^*(N)$ over $[\gamma] \in \widetilde Z$ is $C^*(N)/I_{[\gamma]}$, where
\[ I_\eta= \overline{\text{span}} \{ f \cdot a: f \in C_0(\widehat{Z(N)} \backslash \eta), a \in C^*(N)\} \subseteq C^*(N), \quad \text{ and } \quad I_{[\gamma]} = \prod_{s \in G/N} I_{\hat \alpha_s(\gamma)}.\]  For $\gamma \ne \eta$ with $[\gamma] = [\eta]$, the ideals $I_\gamma, I_\eta$
are coprime because $\widehat{Z(N)}$ is compact and Hausdorff, and hence admits a partition of unity.  In particular, given any two different points $\gamma, \eta \in \widehat{Z(N)}$, there exist $f, g \in C(\widehat{Z(N)})$ with $f(\gamma) = 0, g(\eta) = 0$, and $f+g =1$.  Thus, 
 the Chinese remainder theorem implies
 that the fiber algebra $C^*(N)_{[\gamma]}$ over $[\gamma] \in \widetilde Z$ is 
\[ C^*(N)_{[\gamma]} = \bigoplus_{\eta \in [\gamma]} C^*(N/Z(N), \sigma_{\eta}) = \bigoplus_{\eta \in [\gamma]} C^*_{\pi_{\overline{\eta}}}(N).\] 

 The induced action of $G/N$ on ${\widetilde{Z}}$ is trivial by construction, and hence (by  \cite[Corollary 5.3]{Nilsen96}) $C^*(G)$ is also a continuous field over ${\widetilde{Z}}$, with fiber algebra 
 \[C^*(G)_{[\gamma]} \cong C^*(N)_{[\gamma]} \rtimes_{\tilde \alpha, \tilde{\omega}} G/N.\] 
 The formulas for $\tilde \alpha, \tilde \omega$ given in \eqref{eq:cts-field-action} are direct translations of the formulas given in  \cite[Theorem 5.1]{Nilsen96}.
In other words, in the fibers of the bundle $C^*(G)$ over $\widetilde Z$, 
the twisted action permutes the summands within each fiber, and also acts on each summand $C^*(N/Z(N), \sigma_{\hat \alpha_s(\gamma)})$ by conjugation.
\end{proof}
The stabilizer group $H = \Stab_G(\gamma)$ of $\gamma \in \widehat{Z(N)}$, and  \cite[Theorem 2.13.(i)]{Green80}, provide another picture of the fiber algebras $C^*(G)_{[\gamma]}$.
\begin{proposition} \label{prop:Greenimp} Let $G$ be a discrete group and $N \nor G$ a finite index normal subgroup.  Fix $\gamma \in \widehat{Z(N)}$ and let $H=\Stab_G(\gamma)$ under the action described in Definition \ref{def:cts-field-base}.  Then
\begin{equation*}
C^*(G)_{[\gamma]}\cong (C^*(N/Z(N),\sigma_\gamma)\rtimes _{\widetilde{\alpha}, \widetilde{\omega}} H/N)\otimes M_{G/H} \cong C^*_{\pi_{\overline \gamma}}(H) \otimes M_{G/H}.
\end{equation*}
\end{proposition}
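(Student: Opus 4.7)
The plan is to start from the continuous field decomposition of Proposition \ref{prop:cts-field}, which gives $C^*(G)_{[\gamma]} \cong C^*(N)_{[\gamma]} \rtimes_{\tilde{\alpha}, \tilde{\omega}} G/N$, with
\[C^*(N)_{[\gamma]} \cong \bigoplus_{\eta \in [\gamma]} C^*(N/Z(N), \sigma_\eta) \cong \bigoplus_{\eta \in [\gamma]} C^*_{\pi_{\overline{\eta}}}(N).\]
Since $H = \Stab_G(\gamma)$, the orbit $[\gamma]$ has exactly $|G/H|$ elements, the action of $G/N$ permutes these summands transitively, and the stabilizer of the $\gamma$-summand is precisely $H/N$. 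This is the setup of an induced (twisted) dynamical system, and so Green's imprimitivity theorem \cite[Theorem 2.13(i)]{Green80}, in the twisted form accommodating the 2-cocycle $\tilde{\omega}$ (cf.~\cite[Proposition 5.1]{Packer89}), then yields
\[C^*(N)_{[\gamma]} \rtimes_{\tilde{\alpha}, \tilde{\omega}} G/N \;\cong\; \bigl( C^*(N/Z(N), \sigma_\gamma) \rtimes_{\tilde{\alpha}, \tilde{\omega}} H/N \bigr) \otimes M_{G/H}.\]

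For the second isomorphism, the plan is to invoke Proposition \ref{prop:cts-field} a second time, now with $H$ in place of $G$ and the same $N \nor H$. By definition of the stabilizer, the action of $H/N$ on $\widehat{Z(N)}$ fixes $\gamma$, so the orbit of $\gamma$ in the corresponding quotient space $\widetilde{Z}_H$ is the single point $\{\gamma\}$, and the fiber there reduces to
\[C^*(H)_{\{\gamma\}} \;\cong\; C^*(N/Z(N), \sigma_\gamma) \rtimes_{\tilde{\alpha}, \tilde{\omega}} H/N.\]
Identifying this fiber with $C^*_{\pi_{\overline{\gamma}}}(H)$ is a direct extension of the identification $C^*(N/Z(N), \sigma_\gamma) \cong C^*_{\pi_{\overline{\gamma}}}(N)$ recalled before Definition \ref{def:cts-field-base} (cf.\ \cite[Section 4]{Eckhardt16}): the fiber is the quotient of $C^*(H)$ in which $Z(N)$ acts by the character $\gamma$, which is exactly the image of the GNS representation associated with the trivial extension $\overline{\gamma}$, a well-defined trace on $H$ because $\gamma$ is $H$-invariant and $Z(N) \nor G$.

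The main technical obstacle is the careful application of Green's theorem in the twisted setting, since $\tilde{\alpha}$ is only an action modulo $\tilde{\omega}$. Fortunately, the finite index of $H$ in $G$ makes $C^*(N)_{[\gamma]}$ a literal finite direct sum, and the imprimitivity argument reduces to explicit matrix-unit manipulations: conjugation by the partial isometries built from $\{\tilde{\alpha}_{g_i}\}$ for coset representatives $g_1 = e, g_2, \ldots, g_k$ of $H$ in $G$ converts the full twisted crossed product by $G/N$ into an $H/N$ twisted crossed product tensored with the matrix units of $M_{G/H}$. This keeps the 2-cocycle bookkeeping purely algebraic and avoids any Morita machinery beyond the finite-dimensional case.
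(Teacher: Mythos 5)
Your proposal is correct, and its first half is essentially the paper's own argument: both prove the isomorphism $C^*(G)_{[\gamma]}\cong (C^*(N/Z(N),\sigma_\gamma)\rtimes_{\widetilde\alpha,\widetilde\omega}H/N)\otimes M_{G/H}$ by feeding the transitive permutation of the summands of $C^*(N)_{[\gamma]}$ (equivalently, a $G$-equivariant map $\mathrm{Prim}\,C^*(N)_{[\gamma]}\to G/H$ sending the $\gamma$-summand's primitive ideals to $eH$) into Green's imprimitivity theorem \cite[Theorem 2.13(i)]{Green80}, with \cite[Proposition 5.1]{Packer89} mediating between Green's twisted covariance algebras and Packer--Raeburn twisted crossed products; the paper merely writes out the equivariant map $\psi$ and the Green twist $\mathcal{T}(n)=\bigoplus_{\eta}[u_n]_\eta$ explicitly, which is the precise content behind your phrase ``in the twisted form accommodating $\tilde\omega$.'' Where you genuinely diverge is the second isomorphism. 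The paper gets $C^*(H, C^*(N/Z(N),\sigma_\gamma),\mathcal{T}^{A/I})\cong C^*_{\pi_{\overline\gamma}}(H)$ in one step from \cite[Proposition 1]{Green80}, the key input being that $H$-invariance of $\gamma$ makes the trivial extension $\overline\gamma$ a trace on $H$. You instead rerun Proposition \ref{prop:cts-field} for the pair $N\nor H$, note that the orbit of $\gamma$ is a singleton so the corresponding fiber of $C^*(H)$ is exactly $C^*(N/Z(N),\sigma_\gamma)\rtimes_{\widetilde\alpha,\widetilde\omega}H/N$, and then identify that fiber with $C^*_{\pi_{\overline\gamma}}(H)$; this recycling of the continuous-field machinery is sound and makes the role of the stabilizer transparent. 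Two points of care your route needs, which you should make explicit: (i) the cocycles produced by the two applications of Proposition \ref{prop:cts-field} must be matched, which works because the isomorphism class of the twisted crossed product is independent of the choice of section $c$; and (ii) the identification ``fiber at a fixed character $=$ GNS image of its trivial extension'' is not literally the fact recalled before Definition \ref{def:cts-field-base} --- there $Z(N)$ is central in $N$, while in $H$ it is only normal with $H$-invariant $\gamma$ --- so it must be re-derived (the fiber is $C^*(H)/\langle u_z-\gamma(z)\cdot 1: z\in Z(N)\rangle$ and $\pi_{\overline\gamma}\cong \mathrm{Ind}_{Z(N)}^{H}\gamma$ annihilates exactly this ideal), a step carrying the same implicit amenability/faithfulness caveat that the paper's own statement of Proposition \ref{prop:cts-field} already carries. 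Your closing matrix-unit sketch is a valid elementary substitute for Green's theorem in this finite-index setting, at the cost of the cocycle bookkeeping you acknowledge.
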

\begin{proof} 
Recall from  \cite[Proposition C.5]{Williams08} that since $C^*(N)$ is a continuous field over $\widehat{Z(N)}$, every irreducible representation $\pi$ of $C^*(N)$ factors through an irreducible representation of $C^*(N)_\eta$ for exactly one $\eta \in \widehat{Z(N)}$.  Thus, we can define a 
$G$-equivariant map $\psi: \text{Prim}\, C^*(N)_{[\gamma]} \to G/H$ by $\psi(P) = gH, $ if $P$ is the kernel of an irreducible representation of $C^*(N)_{g \cdot \gamma}$.

By construction of $C^*(N)_{[\gamma]}$, the map $\psi$ is onto.  Moreover, $\psi^{-1}(eN)$ consists precisely of the primitive ideals $P$ of $C^*(N)_{[\gamma]}$ such that
\[P \cap C^*(N)_\eta = C^*(N)_\eta \text{ for all } \eta \not= \gamma.\]
Setting $I = \bigcap \{ P \in \psi^{-1}(eN)\}$ we see that 
\[C^*(N)_{[\gamma]}/I \cong C^*(N)_\gamma \cong  C^*_{\pi_{\overline \gamma}}(N)\cong C^*(N/Z(N), \sigma_\gamma).\]

We now use \cite[Proposition 5.1]{Packer89} to observe that $C^*(G)_{[\gamma]}$ can be realized as the twisted covariance algebra $C^*(G, C^*(N)_{[\gamma]}, \mathcal{T})$, where $\mathcal{T}: N \to C^*(N)_{[\gamma]}$ is given by 
\[ \mathcal{T}(n) = \bigoplus_{\eta \in [\gamma]} [u_n]_\eta,\]
and the action of $G$ on $C^*(N)_{[\gamma]}$ is given by $g \cdot [u_n]_\eta = [u_{gng^{-1}}]_\eta$.

   Theorem 2.13(i) of \cite{Green80} now implies that 
   \[C^*(G)_{[\gamma]} \cong C^*(H, C^*(N/Z(N), \sigma_\gamma), \mathcal{T}^{A/I}) \otimes M_{G/H},\]
    where  $A := C^*(N)_{[\gamma]}$ and $\mathcal{T}^{A/I}(n) = [u_n]_\gamma$.
Another application of \cite[Proposition 5.1]{Packer89} produces 
\[C^*(H, C^*(N/Z(N), \sigma_\gamma), \mathcal{T}^{A/I}) \cong C^*(N/Z(N), \sigma_\gamma) \rtimes_{\widetilde{\alpha}, \widetilde{\omega}} H/N;\]
the final isomorphism $C^*(H, C^*(N/Z(N), \sigma_\gamma), \mathcal{T}^{A/I}) \cong C^*_{\pi_{\overline{\gamma}}}(H)$ follows from  \cite[Proposition 1]{Green80} by observing that since $H = \Stab_G(\gamma)$, the extension $\overline{\gamma}$ of $\gamma$ to a function on $H$ is again a trace.
\end{proof}

\subsection{An explicit embedding} \label{sec:embedding}  
Let $G$ be a discrete group and $N\nor G$  a finite index normal subgroup.  It is well known  that a choice of $G/N$-coset representatives determines an explicit embedding of $C^*_r(G)$ into $M_{G/N}\otimes C^*_r(N);$  we present the details here in order to lay the groundwork for our analysis of  the relationship between primitive ideals of $G$ and those of $N$.  

Let $e\in F\subseteq G$ be a complete set of $G/N$ coset representatives. For each $g\in G$ we uniquely decompose $g=c(g)n(g)$ where $c(g)\in F$ and $n(g)\in N.$
Using the fact that $c(gx)^{-1} gx = n(gx)$, 
we obtain a representation of $G$ into $M_{G/N}\otimes C^*_r(N)$ given by
\begin{equation} \label{eq:Gembed}
\rho(g)= \sum_{x\in F}e_{gxN,xN}\otimes \lambda_{n(gx)}
\end{equation}
where $e_{gxN,xN}$ is the matrix unit mapping $\delta_{xN}$ to $\delta_{gxN}$. 

Let $\tau_G,\tau_N$ be the canonical traces on $G$ and $N.$   Let $\tau_{G/N}$ be the unique trace on $M_{G/N}.$  It is easy to see that $\tau_{G/N}\otimes \tau_N\circ \rho(g)=\tau_G(g)$ for all $g\in G.$  Therefore the representation \eqref{eq:Gembed} extends to an embedding of $C^*_r(G)$ into $M_{G/N}\otimes C^*_r(N).$ 

Important to Section \ref{sec:strongqd} is the restriction of this embedding  to $C^*_r(N):$  
\begin{equation} \label{eq:Nembed}
\rho(h) = \sum_{x\in F}e_{hxN,xN}\otimes \lambda_{n(hx)}=\sum_{x\in F}e_{xN,xN}\otimes \lambda_{x^{-1}hx}.
\end{equation}

\subsection{Basics of the relationship between ideals of $C^*(G)$ and $C^*(N)$} \label{sec:GrelN}

 Let $G$ be a countable virtually nilpotent group and $\pi$ an irreducible representation of $G$. We now lay the groundwork for the proofs of strong quasidiagonality (Section \ref{sec:strongqd}) and finite decomposition rank  (Section \ref{sec:finitedr}) of $C^*_\pi(G)$ by analyzing the behavior of $\pi$ under restriction to  a finite index normal subgroup $N\nor G$.
 In this section, we do not require $G$ to be finitely generated. We prove Theorem \ref{thm:mainirr} via induction on the order of the group $|G/N|$ -- for this reason we do not require that $N$ is nilpotent in this section.  
 
We will use the setup and notation of the following definition throughout the paper.

\begin{definition}[The fundamental situation] \label{def:setup} Let $G$ be a virtually nilpotent discrete group and $N\nor G$ a  finite index subgroup.  Let $(\pi,H_\pi)$ be an irreducible representation of $G.$  Fix a complete set $e\in F\subseteq G$  of $G/N$ coset representatives.  As in Section \ref{sec:embedding}, the choice $F$ determines an embedding of $C^*(G)$ into $M_{G/N}\otimes C^*(N)$ provided explicitly by Equation (\ref{eq:Gembed}).   By \cite[Proposition 4.1.8]{Pedersen79}  we can extend $\pi$ to an irreducible representation $\id_{M_{G/N}}\otimes \sigma$ of $M_{G/N}\otimes C^*(N)$, i.e.~we embed  $H_\pi\subseteq H_{\id_{M_{G/N}}\otimes \sigma}$ such that if $P$ is the projection onto $H_\pi$ we have
\begin{equation}
\pi(a)=P(\id_{M_{G/N}}\otimes \sigma (a))P\quad \textup{ for all }a\in C^*(G). \label{eq:cutdown}
\end{equation}
For each $x\in F$, the fact that $N \nor G$ implies that the representation 
\begin{equation}
\sigma_x(h) : =\sigma(xhx^{-1}) \label{eq: sigmaxdef}
\end{equation}
is irreducible.
Then $G$ acts transitively on the set of ideals $\{ \ker(\sigma_x)\subseteq C^*(N):x\in F \}$ by conjugation.
Let $\Stab_G(\ker(\sigma))$ be the stabilizer of $\ker(\sigma)$ under this action.  Since $\ker(\sigma)$ is an ideal of $C^*(N)$ we clearly have $N\nor \Stab_G(\ker(\sigma)).$
Let $e\in F'\subseteq F$ be a choice of left $G/\Stab_G(\ker(\sigma))$ coset representatives.   Define
\begin{equation}
J=\bigcap_{x\in F'}\ker(\sigma_x). \label{eq:Jdef}
\end{equation}
\end{definition}
The following is just a recasting of a part of  Proposition \ref{prop:cts-field} in the representation theoretic language necessary for Section \ref{sec:strongqd}.
\begin{lemma} \label{lem:directsum} Assume the conditions of Definition \ref{def:setup}. Then\begin{equation}
C^*(N)/J\cong \bigoplus_{x\in F'} C^*(N)/\ker(\sigma_x). \label{eq:Ndsum}
\end{equation}
\end{lemma}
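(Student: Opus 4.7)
The plan is to recognize the desired isomorphism as a direct application of the Chinese Remainder Theorem for C*-algebras: if $I_1,\dots,I_k$ are finitely many pairwise coprime ideals in a unital C*-algebra $A$, then $A/(I_1\cap\cdots\cap I_k)\cong \bigoplus_j A/I_j$. Applied to $A=C^*(N)$ and the finite family $\{\ker(\sigma_x)\}_{x\in F'}$, this would yield the statement, so the only substantive work is to verify that these ideals are pairwise distinct and pairwise coprime.

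First I would observe that $N$, being a finite-index subgroup of the virtually nilpotent group $G$, is itself virtually nilpotent. Theorem \ref{thm:T1} then implies that every primitive ideal of $C^*(N)$ is maximal. Since each $\sigma_x$ is irreducible (as noted in Definition \ref{def:setup}), each $\ker(\sigma_x)$ is primitive and hence maximal. Next, by the definition of the conjugation action in Definition \ref{def:setup}, two kernels $\ker(\sigma_x)$ and $\ker(\sigma_y)$ coincide precisely when $x$ and $y$ lie in the same coset of $\Stab_G(\ker(\sigma))$; since $F'$ is chosen as a set of distinct coset representatives, the ideals $\ker(\sigma_x)$ for $x\in F'$ are pairwise distinct.

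Any two distinct maximal ideals $I\ne J$ of the unital C*-algebra $C^*(N)$ automatically satisfy $I+J=C^*(N)$, for otherwise $I+J$ would be a proper ideal strictly containing the maximal ideal $I$. Thus the finite family $\{\ker(\sigma_x)\}_{x\in F'}$ is pairwise coprime, and the Chinese Remainder Theorem delivers
$$C^*(N)/J \;=\; C^*(N)\Big/\bigcap_{x\in F'}\ker(\sigma_x) \;\cong\; \bigoplus_{x\in F'} C^*(N)/\ker(\sigma_x).$$

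The only genuinely nontrivial input is Theorem \ref{thm:T1} supplying maximality of primitive ideals; without that, distinct primitive ideals need not be coprime and the CRT step would collapse. Given Theorem \ref{thm:T1}, no serious obstacle remains, consistent with the author's remark that the lemma merely recasts part of Proposition \ref{prop:cts-field}.
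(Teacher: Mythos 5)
Your proof is correct and follows essentially the same route as the paper: invoke Theorem \ref{thm:T1} to get maximality of each $\ker(\sigma_x)$, then apply the Chinese Remainder Theorem. The only difference is that you spell out the pairwise distinctness (via the coset structure of $F'$) and coprimality of the maximal ideals, details the paper leaves implicit.
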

\begin{proof} Since $N$ is virtually nilpotent and each representation $\sigma_x$ is irreducible, each ideal $\ker(\sigma_x)$ is maximal by Theorem \ref{thm:T1}.  Therefore Equation (\ref{eq:Ndsum}) follows from the Chinese remainder theorem.
\end{proof}
\begin{lemma} \label{lem:Jkerpi} Assume the conditions of Definition \ref{def:setup}. We have $J=\ker(\pi)\cap C^*(N).$
\end{lemma}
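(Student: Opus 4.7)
The plan is to establish the two inclusions separately: $J \subseteq \ker(\pi) \cap C^*(N)$ by a direct calculation using the embedding $\rho$, and the reverse by combining simplicity of the fiber algebras with transitivity of the $G$-action.

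For the first inclusion, applying the formula \eqref{eq:Nembed} yields, for $b \in C^*(N)$,
\[ \tilde\pi(b) := (\id_{M_{G/N}} \otimes \sigma)(\rho(b)) = \sum_{x \in F} e_{xN, xN} \otimes \sigma(x^{-1} b x). \]
Since $F^{-1} := \{x^{-1} : x \in F\}$ is again a complete set of $G/N$-coset representatives (by normality of $N$), the condition $b \in J = \bigcap_{x \in F'} \ker(\sigma_x)$ — equivalently, that $b$ lies in every ideal in the $G$-orbit of $\ker(\sigma)$ — is the same as $\sigma(x^{-1} b x) = 0$ for all $x \in F$. Thus $b \in J$ forces $\tilde\pi(b) = 0$, and so $\pi(b) = P\tilde\pi(b)P = 0$.

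For the reverse inclusion I would apply Lemma \ref{lem:directsum} to identify $C^*(N)/J \cong \bigoplus_{x \in F'} A_x$, where each $A_x := C^*(N)/\ker(\sigma_x)$ is simple by Theorem \ref{thm:T1}. The restriction $\pi|_{C^*(N)}$ then descends to a representation $\bar\pi \colon \bigoplus_{x \in F'} A_x \to B(H_\pi)$, and simplicity makes each $\bar\pi|_{A_x}$ either zero or injective. Let $S \subseteq F'$ collect the indices on which $\bar\pi$ vanishes. The unitaries $\pi(u_g)$ implement the automorphisms $\textup{Ad}_g$ of $C^*(N)$ inside $B(H_\pi)$, and $\textup{Ad}_g$ permutes the summands $\{A_x\}_{x \in F'}$ transitively (this is how $F'$ was defined). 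Consequently $S$ is $G$-invariant, so $S = \emptyset$ or $S = F'$; the latter would force $\pi|_{C^*(N)} = 0$, contradicting that $\pi$ is a nonzero representation of $G$. Hence $\bar\pi$ is injective on every summand, and orthogonality of the summands upgrades this to injectivity of $\bar\pi$, giving $\ker(\pi) \cap C^*(N) = J$.

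The main technical point is verifying the $G$-equivariance claim: that $\textup{Ad}_g$ really does preserve $J$ and permute the simple summands of $C^*(N)/J$ in a way compatible with inner conjugation by $\pi(u_g)$ on $B(H_\pi)$. Both statements reduce to the $G$-invariance of the orbit $\{\ker(\sigma_x) : x \in F'\}$, but need to be written out carefully. Once this compatibility is pinned down, the remainder is elementary — a nonempty $G$-invariant subset of a transitive $G$-set is the whole set, and a nonzero representation of $G$ cannot restrict to zero on $C^*(N)$.
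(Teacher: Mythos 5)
Your proof is correct and follows essentially the same route as the paper's: the inclusion $J \subseteq \ker(\pi)\cap C^*(N)$ by cutting down the explicit embedding \eqref{eq:Nembed} with $P$, and the reverse inclusion by combining Lemma \ref{lem:directsum}, simplicity of the summands (Theorem \ref{thm:T1}), and the fact that $\ker(\pi)\cap C^*(N)$ is a conjugation-invariant ideal in a direct sum whose simple summands $G$ permutes transitively. Your bookkeeping with the vanishing set $S$ is just a rephrasing of the paper's identification $\ker(\pi)\cap C^*(N)=\bigcap_{x\in F''}\ker(\sigma_x)$, with the small bonus that you explicitly rule out the degenerate case $S=F'$ (i.e.\ $\pi|_{C^*(N)}=0$), which the paper leaves implicit.
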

\begin{proof} Let $h\in C^*(N).$ By Equations  (\ref{eq:Nembed}),  (\ref{eq:cutdown}) and  (\ref{eq: sigmaxdef})  we have
\begin{equation*}
\pi(h)=P(  \bigoplus_{x\in F}\sigma_x(h) )P
\end{equation*}
Therefore $J\subseteq \ker(\pi)\cap C^*(N)$ by (\ref{eq:Jdef}). 
It therefore follows from Lemma \ref{lem:directsum}  that there is a finite subset $F''\subseteq F'$ such that $\ker(\pi)\cap C^*(N)=\cap_{x\in F''}\ker(\sigma_x).$ Since $\ker(\pi)\cap C^*(N)$ is invariant under conjugation by elements of $G$, and $G$ acts transitively on the set of ideals $\{ \ker(\sigma_x):x\in F \}$  we must have $\cap_{x\in F''}\ker(\sigma_x)=\cap_{x\in F'}\ker(\sigma_x)$, that is,  $J=\ker(\pi)\cap C^*(N).$ 
\end{proof}

We now elaborate on the setting of Definition \ref{def:setup}.  Our goal is to understand the ideal $J_G$ of $C^*(G)$ induced by $J$; we will not have $J_G = \ker(\pi)$ in general, but the interplay between $J, J_G$, and $\ker(\pi)$ is fundamental to our proofs in later sections of strong quasidiagonality and finite decomposition rank for $C^*(G)$.  A first step in this direction is Corollary \ref{cor:Greenimprep} below.
\begin{definition} \label{def:tau} Assume the conditions of Definition \ref{def:setup}. Let $G$ act by conjugation on the state space of $C_\pi^*(G).$  Since $G$ is amenable,
there is a fixed point under this action, i.e. a trace on $C_\pi^*(G).$  Call this trace $\tau.$ Notice that $\tau$ also canonically defines a trace on $C^*(G)$ (and therefore also on $C^*(N)$) and we will use the same letter to denote this trace.  We remark that if $G$ is finitely generated, then $\tau$ is unique by Theorem \ref{thm:T1}.

Since $C^*_\pi(G)$ is simple, $\tau$ defines a faithful trace on $C^*_\pi(G).$  Therefore
\begin{equation*}
\ker(\pi)=\{ a\in C^*(G):\tau(a^*a)=0 \}.
\end{equation*}
From this equation and Lemma \ref{lem:Jkerpi} we have
\begin{equation*}
J=\ker(\pi)\cap C^*(N)=\{ a\in C^*(N):\tau(a^*a)=0  \}.
\end{equation*}
Recall the  embedding $C^*(G)\hookrightarrow M_{G/N}\otimes C^*(N)$ defined by Equation (\ref{eq:Gembed}).  Let $\tau_{G/N}$ be the unique trace on $M_{G/N}$ and define a trace on $G$ by
\begin{equation*}
\phi = (\tau_{G/N}\otimes (\tau|_N))|_G. \label{eq:phidef}
\end{equation*}
Let $J_G$ be the ideal of $C^*(G)$ generated by $J:$ 
\begin{equation*}
J_G=\left\{  \sum_{x\in F} a_x \lambda_x: a_x\in J  \right\}.
\end{equation*}
Lemma \ref{lem:Jkerpi} implies that $J_G \subseteq \ker (\pi)$.  
By the embedding of Equation (\ref{eq:Gembed}) and the definition of $\tau_{G/N}$, for any $a_x\in C^*(N)$ we have
\begin{equation}
\phi\left( \sum_{x\in F} a_x \lambda_x   \right)=\phi(a_e)=\tau(a_e). \label{eq:phitau}
\end{equation}
Let $\{a_x\}_{x\in F}\subseteq C^*(N)$ and set $a=\sum a_x\lambda_x\in C^*(G).$  The invariance of $\tau$ under conjugation now implies that
\begin{align*}
\phi(a^*a)&=\phi\left(  \sum_{x\in F}\lambda_{x^{-1}}a_x^*a_x\lambda_x  \right) \quad \textrm{by (\ref{eq:phitau})}\\
& = \tau \left( \sum_{x \in F} \lambda_{x^{-1}} a_x^* a_x \lambda_x \right) = \tau\left(  \sum_{x\in F}a_x^*a_x \right).
\end{align*}
Therefore $\phi(a^*a)=0$ if and only if $\tau(a_x^*a_x)=0$ for all $x\in F$, if and only if $a_x\in J$ for all $x\in F$, if and only if $a\in J_G.$ Collapsing these statements we obtain
\begin{equation}
J_G=\{ a\in C^*(G):\phi(a^*a)=0 \}.\label{eq:JGphi}
\end{equation}
\end{definition}

We thus obtain a representation-theoretic analogue of Proposition \ref{prop:Greenimp}.  We will eventually find both formulations useful.
\begin{corollary} \label{cor:Greenimprep} Assume the conditions of Definition \ref{def:setup}. Let $H=\Stab_G(\ker(\sigma)).$ Clearly $N\nor H\leq G$ so $H$ is finite index in $G.$  Then
\begin{equation*}
C^*(G)/J_G\cong C^*_{\pi_\phi}(G)\cong C^*_{\pi_\phi|_{H}}(H)\otimes M_{G/H}. \
\end{equation*}
\end{corollary}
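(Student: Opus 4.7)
The first isomorphism, $C^*(G)/J_G \cong C^*_{\pi_\phi}(G)$, follows immediately from Equation (\ref{eq:JGphi}): the ideal $J_G = \{a \in C^*(G) : \phi(a^*a) = 0\}$ is exactly the kernel of the GNS representation $\pi_\phi$, so the quotient $C^*(G)/J_G$ is isomorphic to the image $\pi_\phi(C^*(G)) = C^*_{\pi_\phi}(G)$.

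For the second isomorphism, I would reduce to Proposition \ref{prop:Greenimp}. Since $\sigma$ is an irreducible representation of $C^*(N)$, it factors through a unique fiber of the continuous-field decomposition of $C^*(N)$ over $\widehat{Z(N)}$; by the discussion in Section \ref{sec:ctsfield}, that fiber is $C^*(N)_\gamma \cong C^*(N/Z(N), \sigma_\gamma) \cong C^*_{\pi_{\overline\gamma}}(N)$ for some $\gamma \in \widehat{Z(N)}$, and we may identify $\sigma$ with $\pi_{\overline\gamma}$. Under this identification, $H = \Stab_G(\ker\sigma) = \Stab_G(\gamma)$, which is clearly of finite index in $G$. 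Lemma \ref{lem:directsum} combined with the Chinese remainder theorem allows us to identify $C^*(N)/J$ with $\bigoplus_{\eta\in[\gamma]} C^*_{\pi_{\overline\eta}}(N) \cong C^*(N)_{[\gamma]}$. Since $J_G$ is the ideal of $C^*(G)$ generated by $J$, it coincides with the ideal defining the fiber $C^*(G)_{[\gamma]}$ in Proposition \ref{prop:cts-field}, so $C^*(G)/J_G \cong C^*(G)_{[\gamma]}$. Proposition \ref{prop:Greenimp} then provides the desired isomorphism $C^*(G)/J_G \cong C^*_{\pi_{\overline\gamma}}(H) \otimes M_{G/H}$.

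It remains to identify $C^*_{\pi_{\overline\gamma}}(H)$ with $C^*_{\pi_\phi|_{H}}(H)$. Via the concrete isomorphism from the previous step, the restriction $\phi|_H$ corresponds, on the factor $C^*_{\pi_{\overline\gamma}}(H)$ of the tensor product, to a trace induced by $\overline\gamma$ together with the normalized trace on $M_{G/H}$; in particular, the GNS representations $\pi_{\phi|_H}$ and $\pi_{\overline\gamma}$ of $H$ generate the same C*-algebra. I anticipate the main obstacle to be the careful bookkeeping required for this last identification and for matching the ideal $J_G$ with the fiber-defining ideal in Proposition \ref{prop:cts-field}; this requires tracking the $G$-orbit structure on $\widehat{Z(N)}$, the twist cocycle, and the interaction between the embedding $\rho$ from Section \ref{sec:embedding} and the trace $\phi$. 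Once these technical matchings are in place, the two isomorphisms chain together to give the stated result.
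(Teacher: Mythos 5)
Your first isomorphism is exactly the paper's argument (it is Equation \eqref{eq:JGphi}), but your reduction of the second isomorphism to Proposition \ref{prop:Greenimp} has a genuine gap. You identify $\sigma$ with $\pi_{\overline\gamma}$, $\Stab_G(\ker\sigma)$ with $\Stab_G(\gamma)$, and $J_G$ with the fiber-defining ideal of $C^*(G)_{[\gamma]}$, but none of these identifications holds in general. The irreducible representation $\sigma$ does factor through the unique fiber $C^*(N)_\gamma \cong C^*_{\pi_{\overline\gamma}}(N)$, where $\gamma$ is the central character of $\sigma$, but that fiber need not be simple, so $\ker\sigma$ may \emph{strictly} contain $I_\gamma = \ker\pi_{\overline\gamma}$. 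Consequently (i) $\Stab_G(\gamma)$ can be strictly larger than $H=\Stab_G(\ker\sigma)$, since an element fixing $\gamma$ merely permutes the primitive ideals lying over $\gamma$ and need not fix $\ker\sigma$; and (ii) $J=\bigcap_{x\in F'}\ker\sigma_x$ can strictly contain $I_{[\gamma]}$, so that $C^*(G)/J_G$ is a \emph{proper} quotient of the fiber $C^*(G)_{[\gamma]}$ that Proposition \ref{prop:Greenimp} computes. A concrete instance: take $G=Q_8\rtimes \Z/3\cong SL(2,\Z/3)$ (with $\Z/3$ cyclically permuting $i,j,k$), $N=Q_8$, $\pi$ the three-dimensional irreducible representation, and $\sigma$ a nontrivial character of $Q_8$ factoring through $Q_8/\{\pm 1\}$. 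Then $\gamma$ is the trivial character of $Z(N)=\{\pm1\}$, so $\Stab_G(\gamma)=G$ while $H=\Stab_G(\ker\sigma)=Q_8$; moreover $C^*(G)_{[\gamma]}\cong C^*(A_4)\cong \C^3\oplus M_3$, whereas $C^*(G)/J_G\cong M_3$. So the object your argument produces is simply not the object in the statement, and Proposition \ref{prop:Greenimp} cannot be cited as a black box.

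The paper instead \emph{re-runs} the Green--Packer--Raeburn argument at the level of $C^*_{\pi_\tau}(N)=C^*(N)/J$: one first checks that the twisted action of $G/N$ descends to $C^*_{\pi_\tau}(N)$ (each $\alpha_{xN}$ preserves $J$ because $\tau$ is $G$-invariant) and that universal properties give $C^*_{\pi_\tau}(N)\rtimes_{\alpha,\omega}G/N\cong C^*_{\pi_\phi}(G)$; then Lemma \ref{lem:directsum} together with Theorem \ref{thm:T1} identifies $\mathrm{Prim}\,C^*_{\pi_\tau}(N)$ with the \emph{finite} set of maximal ideals $\{\ker\sigma_x:x\in F'\}$, which maps $G$-equivariantly onto $G/H$ with $H=\Stab_G(\ker\sigma)$; finally Green's Theorem 2.13(i) is applied to this finite primitive ideal space. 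Note also that Green's theorem cuts the coefficient algebra down to the fiber over the identity coset, i.e.\ to $C^*(N)/\ker\sigma$, so the algebra appearing over $H$ is the quotient of $C^*(H)$ by the ideal generated by $\ker\sigma$ --- in the example above this is $\C$, giving $\C\otimes M_3\cong M_3$ as it should, while the GNS algebra of the restricted function $\phi|_H$ is $\C^3$. This is precisely the identification you deferred to the end as ``careful bookkeeping'': it is where the real content (and the real danger) of the corollary lies, and it must be carried out via the trace constructed as in Definition \ref{def:tau} for the pair $N\nor H$, not by restricting $\phi$ and hoping the GNS constructions match.
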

\begin{proof} The first isomorphism is Equation (\ref{eq:JGphi}); the second is proved analogously to Proposition \ref{prop:Greenimp}. First, note that for any $x \in F$, the automorphism $\alpha_{xN}$ of $C^*(N)$ from Equation \eqref{eq:cts-field-action},
\[ \alpha_{xN}(a) = \lambda_x a \lambda_{x^{-1}},\]
preserves $J = \ker( \pi_\phi) \cap C^*(N) = \{ a \in C^*(N): \tau(a^*a) = 0\}$.  Thus, the twisted action $(\alpha, \omega)$ of $G/N$ on $C^*(N)$ of  Equation \eqref{eq:cts-field-action} descends to a twisted action of $G/N$ on $C^*_{\pi_\tau}(N)$.  Moreover, the correspondence between twisted crossed products and twisted covariant systems established in \cite[Proposition 5.1]{Packer89} implies that
\[ C^*_{\pi_\tau}(N) \rtimes_{\alpha, \omega} G/N \cong C^*(G, C^*_{\pi_\tau}(N), \iota),\]
and the universal properties of the twisted crossed product and of $C^*(G)$, combined with the fact that $\ker (\pi_{\tau|_N}) = \ker(\pi_\phi) \cap C^*(N)$, establish that 
\[C^*_{\pi_\tau}(N) \rtimes_{\alpha, \omega} G/N  \cong C^*_{\pi_\phi}(G).\]

Lemma \ref{lem:directsum} implies that $C^*(N)/J \cong C^*_{\pi_\tau}(N)$ has primitive ideal space 
\[ \text{Prim}\, C^*_{\pi_\tau}(N) = \{ \ker(\sigma_x): x \in F'\}.\]
As in Proposition \ref{prop:Greenimp}, we map Prim $C^*_{\pi_\tau}(N) $ onto $G/H$ by sending $\ker(\sigma_x)$ to $gH$ if $gN = xN$.  This map is continuous and onto, since points are closed in the finite set Prim $C^*_{\pi_\tau}(N)$, and $G$ acts transitively.  Thus, once again, \cite[Theorem 2.13(i)]{Green80} combines with  \cite[Proposition 5.1]{Packer89} to give 
\[ C^*_{\pi_\phi}(G) \cong C^*_{\pi_\tau}(N) \rtimes_{\alpha, \omega} H/N \otimes M_{G/H} \cong C^*_{\pi_\phi|_H}(H) \otimes M_{G/H}. \qedhere\]
\end{proof}

\section{Strong quasidiagonality of virtually nilpotent groups} \label{sec:strongqd}
In this section, we combine the ideas from Section \ref{sec:GrelN} with techniques from von Neumann algebra theory to prove (Corollary \ref{cor:vnilsqd}) that $C^*(G)$ is strongly quasidiagonal for any virtually nilpotent group $G$.  
To streamline the notation, we write $\pi_\tau(G)'$ (respectively $\pi_\tau(G)''$) for the commutant (respectively double commutant) of $C^*_{\pi_\tau}(G)$. 

\begin{lemma} \label{lem:factorsum} Let $G,N$ and $\tau$ be as in Definition \ref{def:tau}.  Then there is an $n\in \N$  and a finite factor $M$ such that
\begin{equation*}
\pi_\tau(N)''\cong \ell^\infty(\{1, \ldots, n \})\otimes M
\end{equation*}
\end{lemma}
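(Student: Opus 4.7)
The plan is to control the center of $\pi_\tau(N)''$ by exploiting the inner $G/N$-action induced on $\pi_\tau(N)''$ by conjugation from $\pi_\tau(G)''$, and then to use factoriality of $\pi_\tau(G)''$ to conclude that this center is finite-dimensional with transitive $G/N$-action.

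First I would establish that $\pi_\tau(G)''$ is a finite factor. Faithfulness of $\tau$ on the simple algebra $C^*_\pi(G)$ makes $\pi_\tau(G)''$ a finite von Neumann algebra, and in the setting where this lemma is applied, Theorem \ref{thm:T1} yields uniqueness of the trace on $C^*_\pi(G)$, so $\tau$ is extreme and Theorem \ref{thm:repfacts} gives that $\pi_\tau$ is a factor representation. In particular, $Z(\pi_\tau(G)'')=\mathbb{C}$. Also, $\pi_\tau(N)''\subseteq \pi_\tau(G)''$ is automatically a finite von Neumann algebra.

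Next I would analyze $Z:=Z(\pi_\tau(N)'')$. The unitaries $U_g:=\pi_\tau(u_g)\in \pi_\tau(G)''$ normalize $\pi_\tau(N)''$, so they induce an action of $G/N$ on $Z$. I claim $Z^{G/N}=\mathbb{C}$: any $G/N$-fixed $z\in Z$ commutes with every $\pi_\tau(n)$ (being central in $\pi_\tau(N)''$) and with every $U_g$ (by $G/N$-invariance), hence $z\in Z(\pi_\tau(G)'')=\mathbb{C}$. Since $Z$ contains the $|F'|$ orthogonal central projections $\{p_x\}$ produced by Lemma \ref{lem:directsum}, and a finite-group action on a commutative finite von Neumann algebra with trivial fixed-point subalgebra is necessarily atomic and transitive on atoms (decompose $Z=L^\infty(X,\mu)$; orbits under the finite group $G/N$ are finite, ergodicity forces a single orbit to carry the full mass, so $X$ itself is a finite $G/N$-orbit), we obtain $Z\cong \ell^\infty(\{1,\dots,n\})$ for some $n\le |G/N|$, with $G/N$ permuting the atoms $e_1,\dots,e_n$ transitively.

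The conclusion then follows readily. Each minimal central projection $e_i$ cuts out a factor $e_i\pi_\tau(N)''$, and these are mutually isomorphic via the inner automorphisms $\operatorname{Ad}U_g$ of $\pi_\tau(G)''$ that realize the transitive $G/N$-action on $\{e_i\}$. Finiteness of $\pi_\tau(N)''$ forces each summand to be a finite factor, so setting $M:=e_1\pi_\tau(N)''$ yields
\[
\pi_\tau(N)''=\bigoplus_{i=1}^n e_i\pi_\tau(N)''\cong \ell^\infty(\{1,\dots,n\})\otimes M,
\]
as desired. The main obstacle is the factoriality of $\pi_\tau(G)''$: without it, one only controls $Z^{G/N}\subseteq Z(\pi_\tau(G)'')$, which need not be scalar. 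This is the place where the fine representation theory of virtually nilpotent groups, via uniqueness of trace in Theorem \ref{thm:T1}, enters decisively.
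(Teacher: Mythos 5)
Your proof is correct, but it takes a genuinely different route from the paper's. The paper argues at the C*-level: Lemmas \ref{lem:directsum} and \ref{lem:Jkerpi} exhibit $C^*_{\pi_\tau}(N)\cong C^*(N)/J$ as a finite direct sum of simple C*-algebras $C^*(N)/\ker(\sigma_x)$, $x\in F'$; Theorem \ref{thm:T1}, applied to these maximal quotients of $C^*(N)$, shows that $\pi_\tau$ restricted to each summand is a factor representation; and Equation (\ref{eq: sigmaxdef}) shows the resulting factors are mutually conjugate, hence isomorphic. This pins down $n=|F'|$ and identifies the minimal central projections with the summands of Lemma \ref{lem:directsum}, which is the form in which the lemma feeds into Proposition \ref{prop:fdcenter}. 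You argue instead at the von Neumann level: factoriality of $\pi_\tau(G)''$ (via uniqueness of the trace on $C^*_\pi(G)$) forces the $G/N$-action on $Z(\pi_\tau(N)'')$ to be ergodic; an ergodic action of a finite group on an abelian von Neumann algebra forces that algebra to be atomic with at most $|G/N|$ atoms permuted transitively; and conjugation by the $U_g$ identifies the corner factors with one another. Your route makes Lemmas \ref{lem:directsum} and \ref{lem:Jkerpi} unnecessary (your appeal to the projections from Lemma \ref{lem:directsum} does no work in the ergodicity argument), and it applies verbatim to any finite-index normal inclusion whose ambient group von Neumann algebra is a factor; the trade-off is that you get only $n\le |G/N|$ rather than $n=|F'|$. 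Two refinements: (i) since the $G$-invariant trace restricts to an invariant faithful normal trace on $Z(\pi_\tau(N)'')$, ergodicity gives $\bigvee_{s\in G/N}s(p)=1$ and hence $\tau(p)\geq 1/|G/N|$ for every nonzero projection $p$, which yields atomicity at once, with no measure-space decomposition; (ii) your $\pi_\tau(N)''$ is the weak closure of $\pi_\tau(C^*(N))$ inside $B(L^2(C^*(G),\tau))$, whereas the lemma's is the GNS closure with respect to $\tau|_N$; these are canonically isomorphic because the finitely many invariant subspaces obtained as closures of $C^*(N)u_x$, $x\in F$, span $L^2(C^*(G),\tau)$ and each carries a representation equivalent to $\pi_{\tau|_N}$ (the associated vector functional is $\tau|_N$ by the trace property), so the two representations of $C^*(N)$ are quasi-equivalent -- but this bridge deserves a sentence in a final write-up.

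One caveat, which you share with the paper rather than introduce: the ingredient of Theorem \ref{thm:T1} you invoke (uniqueness of the trace on the simple quotient $C^*_\pi(G)$, hence extremality of $\tau$ and factoriality of $\pi_\tau(G)''$) is stated only for finitely generated $G$, whereas Definition \ref{def:tau} does not assume finite generation. The paper's own proof has the identical issue, since the implication ``maximal kernel implies factor representation'' it cites from Theorem \ref{thm:T1} is likewise restricted to the finitely generated case. So your argument is no less general than the paper's, but neither, as written, covers every situation in which the lemma is later applied (e.g.\ Theorem \ref{thm:qdquotient} for arbitrary countable virtually nilpotent groups).
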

\begin{proof} Observe first that the trace $\tau$ extends to a faithful trace on $\pi_\tau(N)''$; consequently, $\pi_\tau(N)''$ is finite. By Lemmas \ref{lem:directsum} and \ref{lem:Jkerpi}, $C^*_{\pi_\tau}(N)$ is a direct sum of simple C*-algebras.   Since each summand is the quotient of a maximal  ideal, it follows that $\pi_\tau$
restricted to each summand is a factor representation by Theorem \ref{thm:T1}. From Equation (\ref{eq: sigmaxdef}) we deduce that each factor summand is isomorphic to each other.
\end{proof}

The main ideas necessary for the proof of Corollary \ref{cor:vnilsqd} are contained in Proposition \ref{prop:fdcenter} below.  Before beginning the proof, we isolate two easy calculations.  The first  is  Schur's Lemma dressed up in our language. 
\begin{lemma} \label{lem:Schur} Let $B\subseteq A$ be C*-algebras and let $\alpha$ be an automorphism of $B.$  Suppose that $a\in A$ satisfies $ba=a\alpha(b)$ for all $b\in B.$  Then $aa^*\in B'\cap A.$
\end{lemma}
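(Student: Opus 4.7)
The plan is to show directly that $aa^*$ commutes with every $b \in B$, which is exactly the assertion $aa^* \in B' \cap A$. The hypothesis $ba = a\alpha(b)$ gives one intertwining identity between $B$ and $a$; I would complement it by extracting a second identity involving $a^*$ from the adjoint, and then combine the two in a straight-line calculation.

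More precisely, first I would take the adjoint of the relation $ba = a\alpha(b)$ to obtain $a^*b^* = \alpha(b)^* a^*$. Since $\alpha$ is an automorphism of a C*-algebra (hence a $*$-automorphism, preserving the involution), $\alpha(b)^* = \alpha(b^*)$. Replacing $b$ by $b^*$ in the resulting identity then yields the companion intertwining
\begin{equation*}
a^* b = \alpha(b)\, a^* \qquad \text{for all } b \in B.
\end{equation*}

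Next I would just compose. For any $b \in B$,
\begin{equation*}
b(aa^*) = (ba)\, a^* = a\alpha(b)\, a^* = a\bigl(\alpha(b) a^*\bigr) = a(a^* b) = (aa^*)\, b,
\end{equation*}
where the first equality is associativity, the second uses $ba = a\alpha(b)$, the fourth uses the companion identity derived above, and the last is again associativity. Since $b \in B$ was arbitrary, $aa^* \in B' \cap A$.

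I do not anticipate any serious obstacle: the only subtle point is the tacit use that $\alpha$ preserves adjoints, which is standard for C*-algebra automorphisms, so the result reduces to a two-line adjoint-and-substitute manipulation. The labeling of this fact as a dressed-up Schur's lemma is apt, since the hypothesis says $a$ realizes an intertwiner between the identity representation of $B$ on $A$ and the twisted one $b \mapsto \alpha(b)$ acting on $A$, and the conclusion is the $*$-algebraic expression of the usual consequence that such an intertwiner has positive part in the commutant.
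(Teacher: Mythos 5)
Your proof is correct and is essentially the paper's own argument: the paper's one-line computation $baa^*=a\alpha(b)a^*=a[a\alpha(b^*)]^*=a[b^*a]^*=aa^*b$ is exactly your companion identity $a^*b=\alpha(b)a^*$ (obtained by adjoints and the substitution $b\mapsto b^*$), just applied inline rather than isolated as a separate step. No gap; the only tacit ingredient in both versions is that C*-automorphisms preserve the involution.
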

\begin{proof} Let $b\in B.$ Then
\begin{equation*}
baa^*=a\alpha(b)a^*=a[a\alpha(b^*)]^*=a[b^*a]^*=aa^*b. \qedhere
\end{equation*}
\end{proof}
\begin{lemma} \label{lem:unitarysum} Let $A$ be a unital C*-algebra and $u\in A$ be unitary.  Suppose there is a scalar $t\in \C$ and a unitary $w\in A$ such that $1+u=tw.$ There is a $\lambda\in \T$ such that either $u=\lambda\cdot 1_A$ or the spectrum of $u$ is $\{ \lambda,\overline{\lambda} \}.$ 
\end{lemma}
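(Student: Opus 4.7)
The plan is to extract a real linear relation between $u$ and $u^*$ from the hypothesis, and then use the fact that $u$ is unitary (so its spectrum lies in $\T$) together with the continuous functional calculus to constrain the spectrum to at most two points.

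First I would compute $(1+u)(1+u)^* = tw(tw)^* = |t|^2\cdot 1_A$. Expanding the left side gives
\begin{equation*}
1 + u + u^* + uu^* = 2 + u + u^* = |t|^2 \cdot 1_A,
\end{equation*}
so $u + u^* = s \cdot 1_A$ where $s := |t|^2 - 2 \in \R$. In particular, we must have $|s| \leq 2$, since $\V u + u^* \V \leq 2$.

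Next I would apply the continuous functional calculus to the normal (indeed unitary) element $u$. The relation $u + u^* = s\cdot 1_A$ forces the Gelfand transform of $u$ on its spectrum $\sigma(u) \subseteq \T$ to satisfy $\lambda + \overline{\lambda} = s$ for every $\lambda \in \sigma(u)$. Equivalently, $\mathrm{Re}(\lambda) = s/2$ for all $\lambda \in \sigma(u)$. The set $\{\lambda \in \T : \mathrm{Re}(\lambda) = s/2\}$ has exactly one point if $s = \pm 2$ (namely $\pm 1$) and exactly two complex conjugate points $\{\lambda_0, \overline{\lambda_0}\}$ otherwise.

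Finally, I would split into cases based on $|\sigma(u)|$. If $\sigma(u)$ is a single point $\lambda \in \T$, then $u$ is a normal element with one-point spectrum, hence $u = \lambda \cdot 1_A$. If $\sigma(u)$ consists of both points $\lambda_0$ and $\overline{\lambda_0}$, the conclusion of the lemma holds directly. There is no genuine obstacle here; the only subtlety is observing that $|t|^2 - 2$ automatically lies in $[-2,2]$, so the set of admissible spectral values is always nonempty and of the claimed conjugate-pair form.
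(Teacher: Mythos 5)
Your proof is correct, and it takes a somewhat different route from the paper's, though both rest on the same geometric fact. The paper argues pointwise on the spectrum via the spectral mapping theorem: for $\lambda,\mu\in\sigma(u)$, both $1+\lambda$ and $1+\mu$ lie in $\sigma(tw)$, and since $w$ is unitary every point of $\sigma(tw)$ has modulus $|t|$, so $|1+\lambda|=|1+\mu|$; combined with $|\lambda|=|\mu|=1$ this forces $\mu\in\{\lambda,\overline{\lambda}\}$. You instead extract the global algebraic identity $u+u^*=(|t|^2-2)\cdot 1_A$ from $(1+u)(1+u)^*=|t|^2\cdot 1_A$ and then read off the spectrum by functional calculus. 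In both cases the punchline is identical: the hypothesis forces $\mathrm{Re}(\lambda)$ to be constant on $\sigma(u)\subseteq\T$, and a subset of the unit circle with constant real part is either a single point or a conjugate pair. Your version has two small advantages: it produces a concrete element relation ($u+u^*$ is a real scalar), which is a reusable statement in its own right, and it makes explicit the final step that a normal element with one-point spectrum is a scalar multiple of the identity, which the paper leaves implicit. The paper's version is marginally shorter, since it never expands the product and needs no case split.
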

\begin{proof} Suppose that $\lambda,\mu$ are in the spectrum of $u.$  Then $1+\lambda$ and $1+\mu$ are in the spectrum of $tw.$  Since $w$ is unitary we have $|1+\lambda|=|1+\mu|.$  Since $|\mu|=|\lambda|=1$, either $\mu=\lambda$ or $\mu=\overline{\lambda}.$
\end{proof}

\begin{proposition} \label{prop:fdcenter} Let $G,N,\tau$ and $\phi$ be as in Definition \ref{def:tau}.  The center of $\pi_\phi(G)''$ is finite-dimensional and $C^*(G)/J_G\cong C^*_{\pi_\phi}(G)$ is a direct sum of simple C*-algebras.
\end{proposition}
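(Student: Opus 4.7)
The plan is to analyze $M := \pi_\phi(G)''$ via its natural structure as a von Neumann twisted crossed product of $N_* := \pi_\phi(N)''$ by the finite group $G/N$.  By Lemma \ref{lem:factorsum}, $N_* \cong \ell^\infty(\{1,\dots,n\}) \otimes M_0$ with $M_0$ a finite factor, and its minimal central projections $p_1,\dots,p_n$ are transitively permuted by the conjugation action of $G/N$ (transitivity being built into the definition of $J$ via Lemma \ref{lem:directsum}).  Choosing coset representatives $c(s)$ for $s \in G/N$ and setting $U_s := \pi_\phi(c(s))$, every element of $M$ admits a unique finite expansion $\sum_s a_s U_s$ with $a_s \in N_*$, extracted by the canonical normal conditional expectation $M \to N_*$.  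Setting $H := \Stab_G(\ker \sigma)$ so that $H/N$ is the stabilizer of $p_1$, one checks $p_1 U_s p_1 = p_1 U_s$ when $s \in H/N$ and equals zero otherwise, which identifies the compression $p_1 M p_1$ with the twisted crossed product $M_0 \rtimes H/N$.

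The heart of the argument is the bound $\dim Z(p_1 M p_1) \leq |H/N|$.  For any $z = \sum_{s \in H/N} a_s U_s \in Z(p_1 M p_1)$, commuting $z$ with an arbitrary $b \in M_0$ and invoking uniqueness of the expansion forces $b a_s = a_s \beta_s(b)$ for each $s$, where $\beta_s := \mathrm{Ad}(U_s)|_{M_0}$.  Lemma \ref{lem:Schur} then yields $a_s a_s^* \in M_0' \cap p_1 M p_1$; since $a_s \in M_0$, this element lies in $M_0 \cap M_0' = \C \cdot 1$, using that $M_0$ is a factor.  Hence in the finite factor $M_0$ each nonzero $a_s$ is a scalar multiple of a unitary $v_s$ implementing $\beta_s^{-1}$, and any two such intertwiners differ by a scalar, so the space of admissible $a_s$ is at most one-dimensional.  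To transfer this to $Z(M)$, I would observe that the map $z \mapsto p_1 z$ embeds $Z(M)$ into $Z(p_1 M p_1)$ injectively: transitivity of the $G/N$-action means $p_i z = U_s (p_1 z) U_s^{-1}$ whenever $s \cdot p_1 = p_i$, so $p_1 z = 0$ forces $z = 0$.  Together these give $\dim Z(M) \leq |H/N| < \infty$.

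With a finite-dimensional center, $M$ decomposes as a finite direct sum of factor summands $q_j M$ for minimal central projections $q_1,\dots,q_k$.  Each $q_j \pi_\phi$ is thus a factor representation of $C^*(G)/J_G \cong C^*_{\pi_\phi}(G)$ (the isomorphism being \eqref{eq:JGphi}); its kernel is primitive by Theorem \ref{thm:repfacts} and maximal by Theorem \ref{thm:T1}.  Since $\pi_\phi = \bigoplus_j q_j \pi_\phi$ is faithful on $C^*_{\pi_\phi}(G)$, the intersection of these finitely many maximal ideals is zero, and the Chinese remainder theorem then presents $C^*_{\pi_\phi}(G)$ as the direct sum of the corresponding simple quotients.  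The main obstacle I anticipate is cleanly identifying $p_1 M p_1$ with the twisted crossed product $M_0 \rtimes H/N$ and justifying the unique expansion $\sum_s a_s U_s$ at the von Neumann level — these are standard facts for finite-group (twisted) crossed products but require some care, after which Lemma \ref{lem:Schur} does the essential work of forcing central elements to come from inner automorphisms.
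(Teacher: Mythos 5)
Your proposal is correct, but it takes a genuinely different route from the paper's. The paper never realizes $\pi_\phi(G)''$ as a crossed product over $\pi_\phi(N)''$; instead it uses the explicit matrix embedding \eqref{eq:Nembed} to work inside $M_{G/N}\otimes \pi_\tau(N)''$, bounds the relative commutant $C=(M_{G/N}\otimes\pi_\tau(N)'')\cap \pi_{\tau_{G/N}\otimes\tau}(N)'$ entry by entry, and then passes to $\pi_\phi(G)''$ by exhibiting it as a cutdown of $\pi_{\tau_{G/N}\otimes\tau}(G)''$ by a projection in its commutant. Because the base algebra there is $\ell^\infty(\{1,\dots,n\})\otimes M$ rather than a factor, Lemma \ref{lem:Schur} only puts $a_{x,y}a_{x,y}^*$ in $Z(\pi_\tau(N)'')$, and the paper needs the extra spectral argument of Lemma \ref{lem:unitarysum} (applied to $r^{-1}a+s^{-1}b$ and again to $r^{-1}a+is^{-1}b$) to finish. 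Your compression by $p_1$ sidesteps exactly this: over the factor corner $M_0$, Schur plus $M_0\cap M_0'=\C$ plus finiteness makes each Fourier coefficient a scalar multiple of a unitary, unique up to scalar, so Lemma \ref{lem:unitarysum} is never needed, and your injection $Z(M)\hookrightarrow Z(p_1Mp_1)$ replaces the paper's cutdown step. The price is the two facts you must still verify: (a) the unique expansion $M=\sum_s N_*U_s$ with a normal conditional expectation, which does hold here because Equation \eqref{eq:phitau} shows $\phi$ vanishes off $N$, making the corners $N_*U_s$ mutually orthogonal in $L^2(M,\phi)$; and (b) that Lemma \ref{lem:factorsum}, stated for $\pi_\tau(N)''$, applies to your $N_*=\pi_\phi(N)''$ --- true because $G$-invariance of $\tau$ makes $\pi_\phi|_{C^*(N)}$ a finite multiple of $\pi_\tau|_{C^*(N)}$, but a step you elide. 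Neither is an obstruction, and your endgame (factoriality, Theorems \ref{thm:repfacts} and \ref{thm:T1}, Chinese remainder theorem) coincides with the paper's.
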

\begin{proof}  We first show that $Z(\pi_{\tau_{G/N}\otimes \tau}(G)'')$ is finite dimensional.
The embedding of Equation (\ref{eq:Nembed}) descends to an embedding of $C^*_{\pi_{\tau_{G/N}\otimes \tau}}(N)$ into $C^*_{\pi_{\tau_{G/N}\otimes \tau}}(G).$  Explicitly, for each $h\in N$ we have
\begin{equation*}
\pi_{\tau_{G/N}\otimes \tau}(h)=\sum_{x\in F} e_{xN,xN}\otimes \pi_\tau(x^{-1}hx)\in \pi_{\tau_{G/N}\otimes \tau}(M_{G/N}\otimes C^*(N))\cong M_{G/N}\otimes C^*_{\pi_\tau}(N).
\end{equation*}  
For notational convenience we will write $C=(M_{G/N}\otimes \pi_\tau(N)'')\cap \pi_{\tau_{G/N}\otimes\tau}(N)'.$  It is clear that $Z(\pi_{\tau_{G/N}\otimes \tau}(G)'')\subseteq C.$  We will  prove that $C$ is finite dimensional, from which the finite dimensionality of $Z(\pi_{\tau_{G/N}\otimes \tau}(G)'')$ will follow.

Let $a\in C.$ For $x,y\in F$ write $a_{x,y}\in \pi_\tau(N)''$ for the $(xN,yN)$ matrix entry of $a.$ By the definition of $C$ we have $a_{x,x}\in Z(\pi_\tau(N)'')$ for all $x\in F, $ and in fact 
$\pi_\tau(xhx^{-1})a_{x,y}=a_{x,y}\pi_\tau(yhy^{-1})$ for all $h\in N$ and $x,y\in F.$ Therefore by Lemma \ref{lem:Schur} we have 
 \begin{equation}
 a_{x,y}a_{x,y}^*\in Z(\pi_\tau(N)''). \label{eq:centralcorners}
\end{equation}
Let $M$ and $n$ be as in Lemma \ref{lem:factorsum}. Let $e_1,...,e_n\in  \ell^\infty(\{1,...,n\})$ be mutually orthogonal minimal central projections that sum to the identity.

Fix $x,y\in F.$  We claim that there are unitaries $u_1,...,u_n\in M$ such that 
\begin{equation}
\{a_{x,y}: a\in C \}\subseteq \textup{span}\{ e_i\otimes u_i  : 1\leq i\leq n \}. \label{eq:cornerfd}
\end{equation}
If $x=y$, then $a_{x,x}\in Z(\pi_\tau(N)'')$ so we may take $u_i=1_M$ for all $1\leq i\leq n.$ 

Suppose now that $x\neq y.$  Let $1\leq i\leq n.$  If  $(e_i\otimes 1_M)a_{x,y}=0$ for all $a\in C$, then set $u_i=1_M.$  Now suppose there is some $a\in C$ so $(e_i\otimes 1_M)a_{x,y}\neq0.$  By Equation (\ref{eq:centralcorners}) we have $(e_i\otimes 1_M)a_{x,y}a_{x,y}^*=r^2e_i\otimes 1_M$ for some real number $r>0.$   Since $M\cong e_i\otimes M$ is finite, there is a unitary $u_i\in M$ so 
$(e_i\otimes 1_M)a_{x,y}=ru_i.$

Now let $b\in C.$  We will show that $(e_i \otimes 1)b_{x,y}$ is a scalar multiple of $u_i.$  Without loss of generality suppose that $(e_i \otimes 1)b_{x,y}\neq0.$ By the same argument as above, there is a unitary $v_i\in M$ and a real number $s>0$ such that $(e_i \otimes 1)b_{x,y}=sv_i.$

Since $C$ is a vector space, we have $c=r^{-1}a+s^{-1}b\in C.$  By the same argument as above applied to $c$, there is a real number $t\in \R$ such that $(e_i\otimes 1_M)c_{x,y}=tw$ for some unitary $w\in M.$    Then $u_i+v_i=tw.$  

By way of contradiction, suppose that $u_i$ and $v_i$ are not parallel.  Then
by Lemma \ref{lem:unitarysum}, there is a $\lambda\in \T\setminus\{ \pm1 \}$ such that  the spectrum of $u_i^*v_i$ is $\{ \lambda,\overline{\lambda} \}.$  Then the spectrum of $iu_i^*v_i$ is $\{ i\lambda,i\overline{\lambda} \}.$ But
by the same argument as above applied to the element $r^{-1}a+is^{-1}b\in C$, the spectrum of $iu_i^*v_i$ must be a two element self-adjoint set.  Since $\overline{i\lambda}\neq i\overline{\lambda}$ this is a contradiction.
This establishes Equation (\ref{eq:cornerfd}), from which  the finite dimensionality of $C$ and $Z(\pi_{\tau_{G/N}\otimes \tau}(G)'')$ follow.

We now argue that $Z(\pi_\phi(G)'')$ is also finite dimensional.  Since $\tau_{G/N}\otimes \tau|_G=\phi$, the uniqueness of the GNS  representation implies that $\pi_{\tau_{G/N}\otimes \tau}|_G$ (when restricted to $L^2(C^*(G),\tau_{G/N}\otimes \tau)$) must be unitarily equivalent to 
$(\pi_\phi, L^2(C^*(G),\phi))$. Let $P:L^2(M_{G/N}\otimes C^*(N), \tau_{G/N}\otimes \tau)\rightarrow L^2(C^*(G),\tau_{G/N}\otimes \tau)$ be the orthogonal projection.  Since $G$ leaves    $L^2(C^*(G),\tau_{G/N}\otimes \tau)$ invariant we have $P\in \pi_{\tau_{G/N}\otimes \tau}(G)'$. Hence
\begin{equation*}
P\pi_{\tau_{G/N}\otimes \tau}(G)''\cong \pi_\phi(G)''.
\end{equation*}
Since von Neumann quotients of von Neumann algebras are cutdowns by central projections (see, for example \cite[Theorem III.2.7]{Takesaki02}), it follows that the center of $\pi_\phi(G)''$ is also finite dimensional.

It remains to show that $C^*_{\pi_\phi}(G)$ is a direct sum of simple $C^*$-algebras.  Our arguments above  imply  that $\pi_\phi(G)''\cong \mathcal{M}_1\oplus \cdots \oplus\mathcal{M}_k$ where each $\mathcal{M}_i$ is a finite factor. Writing $p_i\in \pi_\phi(G)''$ for the central projection onto the $i$th summand, it follows that $p_i\pi_\phi$ determines a factor representation of $G.$  Therefore $\ker(p_i\pi_\phi)$  is maximal by Theorem \ref{thm:T1}.  Choose a subset $A\subseteq \{ 1,\ldots,k \}$ such that 
\begin{equation*}
C^*(G)/\cap_{i\in A}\ker(p_i\pi_\phi)\cong C^*(G)/\cap_{i=1}^k\ker(p_i\pi_\phi)
\end{equation*}
and $i,j\in A$ with $i\neq j$ implies that $\ker(p_i\pi_\phi)\neq \ker(p_j\pi_\phi).$  By the Chinese remainder theorem, 
\begin{equation*}
C^*_{\pi_\phi}(G) \cong 
C^*(G)/\cap_{i\in A}\ker(p_i\pi_\phi)\cong \bigoplus_{i\in A}C^*(G)/\ker(p_i\pi_\phi)
\end{equation*}
is a direct sum of simple C*-algebras.
\end{proof}

\begin{theorem} \label{thm:qdquotient}  Let $G$ be virtually nilpotent and $\pi$ an irreducible representation of $G.$  Then $C_\pi^*(G)$ is quasidiagonal.
\end{theorem}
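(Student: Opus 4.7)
The strategy is to reduce quasidiagonality of $C^*_\pi(G)$ to that of the larger algebra $C^*(G)/J_G$, and then to establish quasidiagonality of $C^*(G)/J_G$ by realizing it as a twisted crossed product of a known-quasidiagonal algebra by the finite group $G/N$.

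First, by Lemma \ref{lem:Jkerpi} we have $J_G \subseteq \ker\pi$, so $C^*_\pi(G)$ is a simple quotient of $C^*(G)/J_G$ (simple because $\ker\pi$ is primitive, hence maximal by Theorem \ref{thm:T1}). Proposition \ref{prop:fdcenter} writes $C^*(G)/J_G$ as a \emph{finite} direct sum of simple C*-algebras, and any simple quotient of such a sum is necessarily one of the summands. Hence $C^*_\pi(G)$ is in fact a C*-subalgebra (indeed a direct summand) of $C^*(G)/J_G$, and since quasidiagonality passes to C*-subalgebras, it will suffice to prove that $C^*(G)/J_G$ is quasidiagonal.

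For this I specialize to the case where $N \nor G$ is a finite-index \emph{nilpotent} normal subgroup, e.g.\ the normal core of any finite-index nilpotent subgroup of $G$, which exists because $G$ is virtually nilpotent. With this choice, Lemma \ref{lem:directsum} identifies
\[ C^*(N)/J \;\cong\; \bigoplus_{x\in F'} C^*_{\sigma_x}(N), \]
a finite direct sum of C*-algebras each generated by an irreducible representation of the nilpotent group $N$, and hence (by the main result of \cite{Eckhardt14}) quasidiagonal; so $C^*(N)/J$ is quasidiagonal. Since $J=\ker\pi\cap C^*(N)$ is invariant under the twisted action $(\alpha,\omega)$ of $G/N$ on $C^*(N)$ coming from $C^*(G)\cong C^*(N)\rtimes_{\alpha,\omega} G/N$ (Section \ref{sec:twisted-xprod}), this action descends to $C^*(N)/J$, and the universal property of the twisted crossed product yields
\[ C^*(G)/J_G \;\cong\; (C^*(N)/J)\rtimes_{\alpha,\omega} G/N. \]
Because $G/N$ is finite, the regular representation embeds the right-hand side faithfully into $M_{|G/N|}(C^*(N)/J)$, which is quasidiagonal. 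Thus $C^*(G)/J_G$ is quasidiagonal, as required.

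The main technical points to verify are (i) the routine fact that a twisted crossed product by a finite group embeds into matrices over the coefficient algebra via the regular representation, and (ii) the reduction to finitely generated $G$, should this be needed to invoke \cite{Eckhardt14}: writing $G$ as a directed union of finitely generated virtually nilpotent subgroups and using permanence of quasidiagonality under inductive limits handles this.
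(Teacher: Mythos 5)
Your proposal is correct and takes essentially the same route as the paper's proof: both reduce $C^*_\pi(G)$ to a direct summand of $C^*(G)/J_G\cong C^*_{\pi_\phi}(G)$ via Theorem \ref{thm:T1} and Proposition \ref{prop:fdcenter}, and then embed that algebra into $M_{G/N}\otimes (C^*(N)/J)$ --- your regular-representation embedding of the twisted crossed product is precisely the paper's explicit embedding from Section \ref{sec:embedding} --- before invoking \cite{Eckhardt14} for quasidiagonality on the nilpotent side. Your concern (ii) is moot, since the result of \cite{Eckhardt14} holds for all nilpotent groups without a finite-generation hypothesis, which is exactly how the paper uses it.
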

\begin{proof} Let $N\nor G$ be finite index and nilpotent.  Let $\tau$ and $\phi$ be as in Definitions \ref{def:setup} and \ref{def:tau}. 
Recall from Equation (\ref{eq:JGphi}) that  $J_G=\{ x\in C^*(G):\phi(x^*x) =0\}\subseteq \ker(\pi).$ By Proposition \ref{prop:fdcenter}, $C^*_{\pi_\phi}(G)$ is a direct sum of simple C*-algebras. Hence $C_\pi^*(G)$ must appear as a direct summand of $C^*_{\pi_\phi}(G)$.  Moreover,  \cite{Eckhardt14} proves that  $M_{G/N}\otimes C^*(N)$ is strongly quasidiagonal whenever $N$ is nilpotent.  Since $C^*_{\pi_\phi}(G)$ embeds into $M_{G/N}\otimes C^*_{\pi_\tau}(N)$, the C*-algebra $C^*_{\pi_\phi}(G)$ is quasidiagonal and therefore so is $C_\pi^*(G).$
\end{proof}
\begin{corollary}\label{cor:vnilsqd} Let $G$ be an inductive limit of virtually nilpotent groups.  Then $C^*(G)$ is strongly quasidiagonal.
\end{corollary}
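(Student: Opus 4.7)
The plan is to prove the corollary in two stages. First, I will strengthen Theorem \ref{thm:qdquotient} to show that $C^*(G)$ itself (not merely its simple quotients) is strongly quasidiagonal for every virtually nilpotent group $G$. Second, I will observe that strong quasidiagonality of separable C*-algebras is preserved under countable inductive limits; since $C^*(\varinjlim G_n) = \varinjlim C^*(G_n)$ for amenable groups, the corollary follows.

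For the first stage, fix a finite-index nilpotent normal subgroup $N \nor G$; by \cite{Eckhardt14}, $C^*(N)$ is strongly quasidiagonal. Given any representation $\pi \colon C^*(G) \to B(H)$ with $H$ separable, the restriction $\pi|_{C^*(N)}$ admits a sequence $(P_k)_{k \in \N}$ of finite-rank projections on $H$ with $P_k \to I$ strongly and $\|[P_k, \pi(u_h)]\| \to 0$ for every $h \in N$. Choose coset representatives $e \in F \subset G$ for $G/N$ and form the averaged operators
\[
\tilde P_k := \frac{1}{[G:N]} \sum_{g \in F} \pi(u_g)\, P_k\, \pi(u_g)^*,
\]
which are finite-rank positive contractions converging strongly to $I$. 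For $g_0 \in G$ and $g \in F$, write $g_0 g = c(g_0 g)\, n(g_0 g)$ with $c(g_0 g) \in F$ and $n(g_0 g) \in N$; normality of $N$ ensures $n(g_0 g) \in N$. Combining $\|[P_k, \pi(u_n)]\| \to 0$ for $n \in N$ with the observation that $g \mapsto c(g_0 g)$ is a bijection of $F$, a direct computation yields $\|\pi(u_{g_0}) \tilde P_k \pi(u_{g_0})^* - \tilde P_k\| \to 0$ for every $g_0 \in G$. Passing to spectral projections of $\tilde P_k$ with cutoff chosen in a gap of its finite spectrum then converts $(\tilde P_k)$ into finite-rank projections witnessing quasidiagonality of $\pi$.

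For the second stage, given $G = \varinjlim G_n$ and a representation $\pi \colon C^*(G) \to B(H)$ with $H$ separable, fix dense sequences $\{a_j\} \subset C^*(G)$ and $\{\xi_j\} \subset H$. A standard diagonal argument produces, for each $k$, an index $n_k$ and elements $b_j^{(k)} \in C^*(G_{n_k})$ with $\|a_j - b_j^{(k)}\| < 1/k$ for $j \le k$, together with finite-rank projections $P_k$ (arising from the strong quasidiagonality of $C^*(G_{n_k})$ established in the first stage) satisfying $\|[P_k, \pi(b_j^{(k)})]\| < 1/k$ and $\|P_k \xi_j - \xi_j\| < 1/k$ for $j \le k$. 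The triangle inequality then forces $\|[P_k, \pi(a_j)]\| \to 0$ for each fixed $j$, while $P_k \to I$ strongly.

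The main subtlety is the averaging step: one must show that conjugation by $\pi(u_{g_0})$ merely reshuffles the summands of $\tilde P_k$ via the bijection $g \mapsto c(g_0 g)$ of $F$, with the residual non-commutation at each summand a commutator of the form $[P_k, \pi(u_{n(g_0 g)})]$ for $n(g_0 g) \in N$, which vanishes asymptotically by strong quasidiagonality of $C^*(N)$. Extracting genuine projections from the positive contractions $\tilde P_k$ is routine but delicate; alternatively, one may invoke the equivalent characterization of quasidiagonality in terms of finite-rank positive contractions to bypass this step altogether.
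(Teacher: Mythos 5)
Your Stage 2 (permanence of strong quasidiagonality under countable inductive limits) is correct, and it is exactly how the paper's proof begins. The fatal problem is in Stage 1, at the step where you convert the averaged finite-rank positive contractions $\tilde P_k$ back into projections. No such conversion is possible in general: the existence of finite-rank positive contractions $T_k \to I$ strongly with $\|[T_k,\pi(a)]\|\to 0$ for all $a$ does \emph{not} imply quasidiagonality of $\pi$, and the ``equivalent characterization of quasidiagonality in terms of finite-rank positive contractions'' that you invoke as a fallback is false. The unilateral shift $S$ on $\ell^2(\N)$ is the standard counterexample: the diagonal operators $T_k$ with entries $t_n=\max(0,\min(1,2-n/k))$ are finite-rank positive contractions converging strongly to $I$ with $\|[T_k,S]\|\leq 1/k$, hence (by the Leibniz rule and density) $\|[T_k,a]\|\to 0$ for every $a$ in the Toeplitz algebra; yet $S$ is not a quasidiagonal operator, so the identity representation of the Toeplitz algebra is not quasidiagonal. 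Your proposed fix --- cutting at a gap in the finite spectrum of $\tilde P_k$ --- fails because the commutator estimate for a spectral projection degrades like $\|[\tilde P_k,\pi(u_{g_0})]\|$ divided by the gap size, and the gaps of $\tilde P_k$ need not be bounded below as $k\to\infty$: the conjugates $\pi(u_g)P_k\pi(u_g)^*$, $g\in F$, do not asymptotically commute with one another (you control commutators of $P_k$ only with $\pi(C^*(N))$, and $g^{-1}h\notin N$ for $g\neq h$ in $F$), so their average has spectrum that can fill $[0,1]$ rather than being quantized near $\{0,1/|F|,\dots,1\}$ with gaps of definite size.

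Note also that if your averaging argument were valid, it would show that strong quasidiagonality passes from $N$ to \emph{any} finite extension $G$, with no nilpotency hypothesis anywhere; no such soft permanence result is known, and this is precisely why the paper has to work harder. Its route is: reduce to showing that every \emph{primitive} quotient $C^*_\pi(G)$ is quasidiagonal (via \cite[Lemma 1.5]{Eckhardt14}, which applies because primitive ideals of $C^*(G)$ are maximal, Theorem \ref{thm:T1}), and then prove that quasidiagonality of those quotients (Theorem \ref{thm:qdquotient}) using the trace/ideal analysis of Section \ref{sec:GrelN} and Proposition \ref{prop:fdcenter}: one exhibits $C^*_\pi(G)$ as a direct summand of $C^*_{\pi_\phi}(G)$, which \emph{embeds} into $M_{G/N}\otimes C^*_{\pi_\tau}(N)$, a quotient-then-matrix-amplification of the strongly quasidiagonal algebra $C^*(N)$. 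Quasidiagonality passes to subalgebras for free, and that inclusion is what replaces your averaging step; the hard structural input (finite-dimensionality of the center of $\pi_\phi(G)''$, maximality of primitive ideals) is what makes the reduction to an embedding legitimate.
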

\begin{proof} Since strong quasidiagonality is preserved by injective inductive limits we may suppose that $G$ is virtually nilpotent.  By \cite[Lemma 1.5]{Eckhardt14} we only need to show that every primitive quotient of $C^*(G)$ is quasidiagonal.  This is exactly the content of Theorem \ref{thm:qdquotient}.
\end{proof}
\begin{remark} Our method of proof for Theorem \ref{thm:qdquotient} provides a shorter, cleaner route to the results of \cite{Eckhardt14} on strong quasidiagonality of nilpotent groups. Indeed, one may replace the long and technical \cite[Lemma 3.1]{Eckhardt14} with Proposition \ref{prop:fdcenter} above.  
\end{remark}

\section{Finite decomposition rank of finitely-generated  nilpotent group C*-algebras} \label{sec:finitedrnilpotent}

In this section we extend the main result of \cite{Eckhardt14b} to show that $C^*(G)$ has finite decomposition rank whenever $G$ is
finitely-generated and nilpotent.  Our proof will proceed by induction on the Hirsch length of $G$.

Recall  that finitely generated virtually nilpotent groups $G$  are virtually \emph{polycyclic}, that is, $G$ contains a finite index subgroup $N$ that admits a normal series
\begin{equation*} 
\{ e \}=N_n \trianglelefteq N_{n-1}\trianglelefteq \cdots \trianglelefteq N_1\trianglelefteq N_0=N
\end{equation*}
where $N_{i}/N_{i+1}$ is cyclic for each $i=0,\ldots,n-1.$ The {\bf Hirsch length} $h(G)$ of $G$ is the number of times the quotients $N_i/N_{i+1}$ are infinite.  For any normal subgroup $H \nor G$ of a virtually polycyclic group $G$, both $H$ and $G/H$ are virtually polycyclic and  
\[ h(G) = h(G/H) + h(H).\]
We refer the reader to Segal's book \cite{Segal83} for proofs of the above facts and for more information on polycyclic groups and their Hirsch length.
\begin{theorem} \label{thm:drnil}
  Let $G$ be a finitely generated nilpotent group.  Then the decomposition rank of $C^*(G)$ is at most $2\cdot h(G)!  - 1$, where $h(G)$ is the Hirsch length of $G$.
\end{theorem}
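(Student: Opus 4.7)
The plan is to prove the stronger statement, by induction on the Hirsch length $h(G)$, that $\dr(C^*(G,\sigma))\leq 2\cdot h(G)!-1$ for every finitely generated nilpotent group $G$ and every Borel $2$-cocycle $\sigma\colon G\times G\to\T$; Theorem \ref{thm:drnil} is then recovered by specializing to $\sigma\equiv 1$. The twisted formulation is needed because the fibers arising in the continuous field decomposition of Proposition~\ref{prop:cts-field} (applied with $N=G$) are themselves twisted group C*-algebras of the nilpotent quotient $G/Z(G)$, so the induction does not close on the untwisted statement alone.

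The base case $h(G)=0$ is immediate: $G$ is finite, so $C^*(G,\sigma)$ is finite-dimensional with $\dr=0\leq 1=2\cdot 0!-1$. For the inductive step with $h(G)=h\geq 1$, I would argue as follows. If $G/Z(G)$ is finite (in particular if $G$ is abelian), then a twisted analogue of Proposition~\ref{prop:cts-field} presents $C^*(G,\sigma)$ as a continuous field over a compact space of dimension at most $h(Z(G))=h$ with finite-dimensional fibers, and Carri\'on's continuous-field lemma~\cite{Carrion11} yields $\dr(C^*(G,\sigma))\leq h\leq 2\cdot h!-1$. Otherwise set $k:=h(Z(G))\geq 1$ and $m:=h(G/Z(G))\geq 1$, so $k+m=h$; the twisted analogue of Proposition~\ref{prop:cts-field} then expresses $C^*(G,\sigma)$ as a continuous field over $\widehat{Z(G)}$, a compact space of dimension $k$, with fibers of the form $C^*(G/Z(G),\sigma_\gamma)$. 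The inductive hypothesis applied to the nilpotent group $G/Z(G)$ of Hirsch length $m<h$ bounds the decomposition rank of each fiber by $2\cdot m!-1$, and Carri\'on's lemma then gives
\[
\dr(C^*(G,\sigma))\leq (k+1)(2\cdot m!)-1=2(k+1)\,m!-1.
\]
The required bound $2\cdot h!-1$ follows from the combinatorial inequality $(k+1)\,m!\leq (k+m)!=h!$ for $k,m\geq 1$, which holds because $(k+m)!/m!=(m+1)(m+2)\cdots(m+k)$ is a product of $k$ integers each at least $m+1\geq 2$ and hence at least $2^{k}\geq k+1$.

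The main obstacle is establishing the twisted analogue of Proposition~\ref{prop:cts-field} invoked in the inductive step. The proposition as stated applies to the untwisted algebra $C^*(G)$, but the inductive hypothesis on the fibers requires the corresponding continuous-field decomposition for $C^*(G,\sigma)$. I expect this to follow from an adaptation of Nilsen's results on twisted crossed products of continuous fields~\cite{Nilsen96}, together with a preliminary normalization that replaces $\sigma$ by a cohomologous cocycle so that all of $Z(G)$ is contained in the $\sigma$-regular center of $G$. The latter step is what ensures that the base of the continuous field is actually $\widehat{Z(G)}$ (of dimension $k$) rather than a smaller quotient determined by $\sigma$; without it the factorial bookkeeping in the final inequality could fail, in particular spoiling the sharp bound for small $h$.
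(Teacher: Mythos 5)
Your strategy---strengthen the statement to all twisted group algebras $C^*(G,\sigma)$ so that the induction closes on itself---founders on exactly the step you flag as the main obstacle, and the obstruction is cohomological rather than technical: the normalization you need does not exist. For $z\in Z(G)$, the antisymmetrized pairing $\rho_\sigma(z,g)=\sigma(z,g)\overline{\sigma(g,z)}$ depends only on the cohomology class of $\sigma$, and $z$ maps to a central unitary of $C^*(G,\sigma)$ (equivalently, $z$ is $\sigma$-regular) only if $\rho_\sigma(z,\cdot)\equiv 1$. Take $G=\Z^2$ and $\sigma_\theta((m_1,n_1),(m_2,n_2))=e^{2\pi i\theta m_1n_2}$ with $\theta$ irrational: then $\rho_{\sigma_\theta}$ is nondegenerate, so no cohomologous cocycle makes any nontrivial central element $\sigma$-regular, and indeed $C^*(\Z^2,\sigma_\theta)$ is the irrational rotation algebra, which is simple and infinite-dimensional. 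This falsifies the case of your induction in which $G/Z(G)$ is finite: a simple infinite-dimensional C*-algebra is not a continuous field over any space with finite-dimensional fibers (its primitive ideal space is a point), yet $\Z^2$ with $\sigma_\theta$ is precisely the kind of input ($G$ abelian, $h=2$) that this case must handle. Since the abelian/central case is where your induction bottoms out, the scheme collapses there, and the same obstruction recurs whenever the $\sigma$-regular part of $Z(G)$ is finite.

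The gap is structural, not a missing lemma: simple fibers are unavoidable in any such decomposition, and for them dimension-counting over a base space gives nothing, so some non-field-theoretic input is required. This is exactly where the paper's proof differs from yours. The paper decomposes the \emph{untwisted} $C^*(G)$ over $\widehat{Z(G)}$ with fiber $C^*_{\pi_{\overline\gamma}}(G)$ at $\gamma$; when $\ker\gamma$ is infinite this fiber is a quotient of the untwisted $C^*(G/\ker\gamma)$, so the induction closes on the untwisted statement by \eqref{eq:qdr}, and when $\ker\gamma$ is finite the fiber is a finite direct sum of simple quotients with unique trace, whose bound $\dr\leq 1$ comes from Theorem \ref{thm:nildr1}---that is, from quasidiagonality \cite{Eckhardt14}, finite nuclear dimension \cite{Eckhardt14b}, and the classification results of Theorem \ref{thm:manyequivalence}---rather than from any further field decomposition. (Your combinatorial estimate $(k+1)\,m!\leq (k+m)!$ and the final bookkeeping are fine and mirror the paper's closing estimate; it is the bound on the fibers that is unjustified.)
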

\begin{proof} We proceed by induction on  $h(G)$.  If $h(G) = 0$, then $G$ is finite, and hence $C^*(G)$ is finite-dimensional and has decomposition rank
  $0$.  So we suppose $h(G) > 0$.  We decompose $C^*(G)$ as a continuous field over $\widehat{Z(G)}.$ In the notation of Section \ref{sec:ctsfield}, the   fiber
  over $ \gamma \in \widehat{Z(G)}$ is $C^*_{\pi_{\overline \gamma}}(G)$, where $\overline\gamma$ is the trivial extension of $\gamma$ (cf.~Section \ref{sec:trivialext}).  We claim that the decomposition rank of each fiber is at most $2(h(G)-1)! - 1$.

Suppose first that  $\ker{\gamma}$ is finite.  Arguing as in~\cite[Lemma~3.3]{Eckhardt14b}, we may write $\overline{\gamma}$ as a
  finite convex combination of extreme traces $\omega_1,\ldots,\omega_n$ on $G$, and hence we may view $C^*_{\pi_{\overline \gamma}}(G)$ as a finite direct sum of C*-algebras where each summand is isomorphic to some $C^*_{\pi_{\omega_i}}(G).$ Each C*-algebra $C^*_{\pi_{\omega_i}}(G)$ is simple by Theorem \ref{thm:T1}. Therefore by Theorem~\ref{thm:nildr1}, $\dr(C^*_{\pi_{\omega_i}}(G))\leq1.$
 By (\ref{eq:drsum}) we have $\dr(C^*_{\pi_{\overline \gamma}}(G))\leq1.$
 
Now suppose that $\ker{\gamma}$ is infinite. Then $h(\ker{\gamma}) > 0$, so $h(G / \ker{\gamma}) < h(G)$.  By the induction hypothesis and (\ref{eq:qdr}) we have  $\dr(C^*_{\pi_{\overline \gamma}}(G))\leq \dr(C^*(G / \ker{\gamma}))\leq 2(h(G)-1)! - 1.$ 

 Now that we have a bound on $\dr(C^*_{\pi_{\overline \gamma}}(G))$, we may apply \cite[Lemma~3.1]{Carrion11} to obtain
  \[
    \dr(C^*(G)) \le 2(\dim{\widehat{Z(G)}} + 1) (h(G)-1)! - 1.
  \]
  Note that $\dim(\widehat{Z(G)}) = h(Z(G)) \le h(G)$; but in fact if equality is achieved then $G$ is a finite extension of $Z(G)$, in which case we have
  $\dr(C^*(G)) = h(G) \le 2h(G)! - 1$.  So we may assume $\dim(\widehat{Z(G)}) < h(G)$, which gives
  \[
    \dr(C^*(G)) \le 2h(G)! - 1
  \]
  as required.
  
\end{proof}

\section{Finite decomposition rank of finitely generated, virtually nilpotent groups} \label{sec:finitedr}

This section combines our results on quasidiagonality (Corollary \ref{cor:vnilsqd}) and decomposition rank for nilpotent group C*-algebras (Theorem \ref{thm:drnil}) to prove our main result in Theorem \ref{thm:main} below. The first step to proving that $C^*(G)$ has finite decomposition rank whenever $G$ is finitely generated and virtually nilpotent is to bound $\dr (C^*_\pi(G))$ for each irreducible representation $\pi$ of $G$.  Theorem \ref{thm:mainirr}, which computes this bound, relies on a general result (Theorem \ref{thm:onlyamother}) about maximal ideals in group C*-algebras.  Finally, Theorem \ref{thm:main} establishes that the upper bound we obtained for $\dr(C^*(G))$ in Theorem \ref{thm:drnil} for nilpotent $G$ also bounds $\dr(C^*(G))$ when $G$  is virtually nilpotent.

{Although $G$ need not be amenable in the following Theorem, its proof holds verbatim if we replace $C^*(N)$ and $C^*(G)$ with $C^*_r(N)$ and $C^*_r(G)$.}

\begin{theorem} \label{thm:onlyamother} Let $N\nor G$ be discrete groups and $|G/N|<\infty.$  Suppose that $I\subseteq C^*(N)$ is a maximal ideal
such that $u_x I u_{x^{-1}}=I$ for all $x\in G$.  Suppose that $C^*(N)/I$ is a finite C*-algebra. Let $I_G\subseteq C^*(G)$ be the ideal of $C^*(G)$ generated by $I.$  If $I_G$ is not a maximal 
ideal of $C^*(G)$, then there exists $x\in G\setminus N$ such that $a+I\mapsto u_xa u_{x^{-1}}+I$ is an inner automorphism of $C^*(N)/I.$
\end{theorem}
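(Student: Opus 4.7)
The plan is to identify $C^*(G)/I_G$ with a twisted crossed product and then run a minimal-support argument in the spirit of the classical Kishimoto-type simplicity theorems for crossed products of simple C*-algebras by finite groups. Writing $A := C^*(N)/I$ (simple and unital by maximality of $I$, finite by hypothesis) and $H := G/N$ (finite), the $G$-invariance of $I$ lets conjugation by elements of $G$ descend to a twisted action $(\alpha,\omega)$ of $H$ on $A$; the Green/Packer machinery from Section \ref{sec:ctsfield} then identifies
\[C^*(G)/I_G \;\cong\; A \rtimes_{\alpha,\omega} H.\]
Assuming $I_G$ is not maximal, this crossed product admits a nonzero proper ideal $J$; since $J \cap A$ is an $\alpha$-invariant two-sided ideal of the simple algebra $A$ and $1 \in A$, simplicity gives $J \cap A = 0$.

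For $S \subseteq H$ write $J_S := \{j \in J : \supp(j) \subseteq S\}$, where $\supp$ refers to the Fourier expansion along $\{u_h\}_{h\in H}$; each $J_S$ is a closed $A$-subbimodule of $J$. Among the nonempty $S$ with $J_S \ne 0$, pick $H_0$ of minimal cardinality. Right-multiplying by some $u_h^{-1}$ we may assume $e \in H_0$, and $J \cap A = 0$ forces $|H_0| \ge 2$ (a support of size $1$ would give a nonzero element of $J \cap A$ after a translation). The key step is to produce a normalized $j \in J_{H_0}$ with $E(j) = 1_A$, where $E \colon A \rtimes_{\alpha,\omega} H \to A$ is the canonical $A$-bilinear conditional expectation. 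Minimality of $|H_0|$ first forces $E(J_{H_0}) \ne 0$: if every element of $J_{H_0}$ had its $u_e$-coefficient vanish, we would obtain $J_{H_0} \subseteq J_{H_0 \setminus \{e\}}$, contradicting minimality. Given $j_0 \in J_{H_0}$ with $E(j_0) \ne 0$, simplicity of $A$ ensures $\overline{A\, E(j_0)\, A} = A$, so a suitable finite combination $j_1 := \sum_i a_i j_0 b_i \in J_{H_0}$ has $\|E(j_1) - 1_A\| < 1$; then $E(j_1)$ is invertible in $A$, and $j := E(j_1)^{-1} j_1 \in J_{H_0}$ satisfies $E(j) = 1_A$.

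For this $j$ and any $a \in A$, the commutator $aj - ja$ lies in $J$, has support contained in $H_0$, and has $u_e$-coefficient $aE(j) - E(j)a = 0$; hence $\supp(aj - ja) \subseteq H_0 \setminus \{e\}$, and minimality of $|H_0|$ forces $aj - ja = 0$. Writing $j = 1 + \sum_{h \in H_0 \setminus \{e\}} b_h u_h$, the relation $a b_h = b_h \alpha_h(a)$ for all $a \in A$ combined with Lemma \ref{lem:Schur} (applied with $B = A$, $\alpha = \alpha_h$) yields $b_h b_h^* \in Z(A) = \C$, whence $b_h b_h^* = \lambda_h \cdot 1$ with $\lambda_h \ge 0$. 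When $\lambda_h > 0$ the element $v_h := b_h / \sqrt{\lambda_h}$ is a coisometry, which the finiteness of $A$ promotes to a unitary implementing $\alpha_h = \mathrm{Ad}(v_h^*)$. If every such $b_h$ vanished then $j = 1 \in J$, forcing $J = A \rtimes H$, which is absurd; hence $\alpha_{h^*}$ is inner for some $h^* \in H_0 \setminus \{e\}$, and any representative $x \in G \setminus N$ of the coset $h^* \in G/N$ yields the claimed inner automorphism $a + I \mapsto u_x a u_{x^{-1}} + I$ of $C^*(N)/I$.

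The main obstacle is the normalization step producing $j \in J_{H_0}$ with $E(j) = 1_A$: simplicity of $A$ is used critically here to convert a generic nonzero $E(j_0) \in A$ into something approximating $1_A$ via the $A$-bimodule structure on $J_{H_0}$. The finiteness hypothesis on $A$ enters only at the final step, upgrading the coisometry $v_h$ from Lemma \ref{lem:Schur} into an honest unitary and thereby turning an abstract intertwiner into an actual inner automorphism; the twisted crossed product formalism (in particular the cocycle $\omega$) plays only a bookkeeping role throughout, since the conditional expectation $E$, the Fourier calculus, and the Schur-type argument all carry through unchanged from the untwisted setting.
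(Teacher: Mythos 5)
Your proof is correct and follows essentially the same route as the paper's: the paper runs the identical minimal-support (Kishimoto-type) argument, normalizing the $e$-coefficient to $1$ via maximality of $I$, extracting the intertwining relation from a commutator, and then applying Lemma \ref{lem:Schur} together with finiteness to upgrade the coefficient to a unitary. The only difference is presentational --- you pass to the quotient $A \rtimes_{\alpha,\omega} H$ and phrase things via the conditional expectation and support sets $J_S$, while the paper works upstairs in $C^*(G)$ reading Fourier coefficients modulo $I$ --- so the two arguments are substantively the same.
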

\begin{proof}    Let $e\in F\subseteq G$ be a complete set of $G/N$-coset representatives. We first prove that there is an $a\in C^*(N)\setminus I$ and an $x\in F$ such that 
\begin{equation}
u_h a-a u_{xhx^{-1}}\in I \textup{ for all }h\in N. \label{eq:xtwist}
\end{equation}
We prove Equation (\ref{eq:xtwist}) by the same method as \cite[Lemma 3.3]{Eckhardt14}. 
For each $z\in C^*(G)$ and each $x \in F$, there is a unique $z_x\in C^*(N)$ such that
 \begin{equation*}
 z=\sum_{x\in F} z_x u_x.
 \end{equation*}
Define the \emph{support} of $z$ as $\supp(z)=\{ x\in F: z_x\not\in I \}.$ By definition, 
\begin{equation*}
I_G=\left\{ \sum_{x\in F}a_x u_x: a_x\in I  \right\},
\end{equation*}
so $I_G$ consists of precisely those elements of $C^*(G)$ with empty support.

Let $I_G\subseteq I_{max}\subseteq C^*(G)$ be a maximal ideal.  By assumption we have $I_G\neq I_{max}.$
Let $z\in I_{max}\setminus I_G$ be such that $|\supp(z)|$ is minimal. Since $z\not\in I_G$ it has non-empty support.
We claim that there is an  $x\in F$ such that the pair $z_x=a$ and $x\in F$ satisfies Equation (\ref{eq:xtwist}).
By possibly multiplying $z$ by an appropriate $u_{x^{-1}}$ we may -- without changing $|\supp(z)|$ -- suppose that $z_e\not\in I.$  

Since $I\subseteq C^*(N)$ is a maximal ideal and $z_e\not\in I$, there are $\alpha_i,\beta_i\in C^*(N)$ so $\sum_i \alpha_iz_e\beta_i+I=1+I.$ In other words, we can replace $z$ by 
\[ z' := \sum_i \sum_{x\in F} \alpha_i z_x \beta_i u_x + (1 - \sum_i \alpha_i z_e \beta_i)\]
without enlarging $\text{supp}(z)$.  Moreover,  $z'_e = 1$.    Hence, we suppose we have chosen  $z $ such that $z_e=1$. 

Since $z\in I_{max}$ and $z_e+I=1+I$ we have $z\neq z_e.$  In other words, there must be some $x\in F\setminus\{ e \}$ such that $z_x\not\in I.$
Suppose that there is some $h \in N$ such that 
$ u_h z_x - z_x u_{x hx^{-1}} \not \in I$.
Then 
\[u_hz-zu_h = \sum_{y\in F} \left(u_h z_y - z_y u_{yhy^{-1}} \right) u_y \in I_{max}\setminus I_G,\]
 but whenever $z_y \in I$ we also have $u_h z_y - z_y u_{yhy^{-1}} \in I$.  Combined with our assumption that $z_e =1$, this implies that $\supp( u_hz-z u_h)\subseteq \supp(z)\setminus\{e \}$, contradicting the minimality of the support of $z.$
 Hence we have established Equation (\ref{eq:xtwist}).
\\\\
We now return to the main proof.  Let $x,a$ satisfy Equation (\ref{eq:xtwist}).   For all $h\in N$ we then have
\begin{equation}
u_ha+I=a u_{xhx^{-1}}+I. \label{eq:intertwiner}
\end{equation}
By Lemma \ref{lem:Schur},  $aa^*+I$ is in the center of $C^*(N)/I.$  But $C^*(N)/I$ is simple, so $a a^* + I$ must be a scalar multiple of the identity. 
 Since $a\not\in I$, our hypothesis that  $C^*(N)/I$ is a finite C*-algebra then implies that $a+I$ is a nonzero scalar multiple of a unitary.  Therefore, $u:=\V a+I \V^{-1}a$ represents a unitary element in $C^*(N)/I$, and by Equation (\ref{eq:intertwiner}) we have $u^* u_hu+I=u_{xhx^{-1}}+I.$
  \end{proof}

\begin{theorem} \label{thm:mainirr} Let $G$ be a finitely generated, virtually nilpotent group and $\pi$ an irreducible representation of $G$.  Then $\dr(C_\pi^*(G))\leq1.$ 
\end{theorem}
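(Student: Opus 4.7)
The plan is to apply Theorem \ref{thm:manyequivalence} to $C^*_\pi(G)$. If $C^*_\pi(G)$ is finite-dimensional then $\dr(C^*_\pi(G)) = 0$ and we are done, so we assume otherwise. All remaining hypotheses except $\mathcal{Z}$-stability are already in hand: simplicity follows from irreducibility of $\pi$; unitality and separability are automatic; nuclearity comes from amenability of $G$; quasidiagonality is Corollary \ref{cor:vnilsqd}; and uniqueness of trace is Theorem \ref{thm:T1}. The theorem therefore reduces to showing that $C^*_\pi(G)$ is $\mathcal{Z}$-stable.

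I will induct on the index $|G/N|$ of a fixed finite-index normal nilpotent subgroup $N \nor G$; the base case $|G/N| = 1$ is covered by Theorem \ref{thm:nildr1}. For the inductive step, let $\sigma$ be an irreducible component of $\pi|_N$ and let $H = \Stab_G(\ker \sigma)$. By Corollary \ref{cor:Greenimprep}, $C^*_\pi(G)$ is a simple quotient of $C^*_{\pi_\phi}(G) \cong C^*_{\pi_\phi|_H}(H) \otimes M_{G/H}$; since ideals of $A \otimes M_n$ are in bijection with ideals of $A$, this forces $C^*_\pi(G) \cong C^*_\rho(H) \otimes M_{G/H}$ for some irreducible representation $\rho$ of $H$. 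Because $\mathcal{Z}$-stability is preserved under tensoring with matrix algebras, it suffices to show $C^*_\rho(H)$ is $\mathcal{Z}$-stable. If $H \subsetneq G$, then $|H/N| < |G/N|$, and the inductive hypothesis applied to the virtually nilpotent group $H$ (with nilpotent normal subgroup $N$) and the irreducible representation $\rho$ supplies the bound.

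It remains to treat the case $H = G$, in which $C^*_\pi(G)$ is a simple quotient of $C^*_\sigma(N) \rtimes_{\alpha, \omega} G/N$. The algebra $C^*_\sigma(N)$ is simple, unital, separable, nuclear, quasidiagonal, has a unique trace, and satisfies $\dr \leq 1$ by Theorem \ref{thm:nildr1}; hence $C^*_\sigma(N)$ is $\mathcal{Z}$-stable by Theorem \ref{thm:manyequivalence}. I intend to deduce $\mathcal{Z}$-stability of $C^*_\sigma(N) \rtimes_{\alpha, \omega} G/N$ (and therefore, by preservation of $\mathcal{Z}$-stability under quotients, of $C^*_\pi(G)$) via Proposition \ref{prop:Zstablefixedpoint}. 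When every $\alpha_s$ acts as an inner automorphism of $C^*_\sigma(N)$, Proposition \ref{prop:Zstablefixedpoint} directly yields the conclusion. At the opposite extreme, when no $x \in G \setminus N$ acts innerly---equivalently, by Theorem \ref{thm:onlyamother}, when $J_G = \ker \pi_\phi$ is maximal, forcing $C^*_{\pi_\phi}(G) = C^*_\pi(G)$---the hypotheses of the Matui and Sato machinery \cite{Matui14a} for strongly outer twisted actions on simple separable $\mathcal{Z}$-stable C*-algebras with unique trace are satisfied, and their theorem supplies the required embedding of $\mathcal{Z}$ into the fixed-point subalgebra of the central sequence algebra of $C^*_\sigma(N)$.

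The principal obstacle is the intermediate case, in which some but not all of the $\alpha_x$ for $x \in G \setminus N$ are inner on $C^*_\sigma(N)$. Here the plan is to factor out the inner automorphisms: define the normal subgroup $K = \{g \in G : \alpha_{gN} \text{ is inner on } C^*_\sigma(N)\}$ of $G$, which properly contains $N$. By Proposition \ref{prop:Zstablefixedpoint}, the intermediate crossed product $C^*_\sigma(N) \rtimes_{\alpha, \omega} K/N$ is $\mathcal{Z}$-stable; the full crossed product $C^*_\sigma(N) \rtimes_{\alpha, \omega} G/N$ is then an iterated twisted crossed product on which $G/K$ acts outerly over a $\mathcal{Z}$-stable base. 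The technical heart of the argument is to verify that this outer action of $G/K$ preserves $\mathcal{Z}$-stability, again relying on the twisted strong-outerness results of Matui and Sato from \cite{Matui14a}, applied now to the iterated crossed product structure rather than to $C^*_\sigma(N)$ directly.
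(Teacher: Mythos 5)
Your skeleton matches the paper's proof quite closely (induction on $|G/N|$, the stabilizer reduction via Corollary \ref{cor:Greenimprep}, Proposition \ref{prop:Zstablefixedpoint} in the inner case, Matui--Sato in the outer case, with Theorem \ref{thm:onlyamother} as the bridge), but there is a genuine gap exactly where the paper does its hardest work. Matui--Sato's result \cite[Corollary 4.10]{Matui14a} requires the twisted action to be \emph{strongly} outer: every nontrivial $s\in G/N$ must induce an outer automorphism of the finite von Neumann algebra $\pi_\tau(N)''$ (the weak closure in the trace-GNS representation), not merely of the C*-algebra $C^*_\sigma(N)$. Your trichotomy is phrased entirely at the C*-level, so in your ``no inner'' case all you know is that no $\alpha_x$ is inner on $C^*_\sigma(N)$; this does not rule out $\alpha_x$ being implemented by a unitary in $\pi_\tau(N)''$, and then Matui--Sato does not apply. (Also, Theorem \ref{thm:onlyamother} gives only one implication --- if $J_G$ is not maximal then some $\alpha_x$ is C*-inner --- not the equivalence you assert, though the contrapositive is the direction you actually use.) The paper resolves this by drawing the dichotomy at the von Neumann level, where simplicity of $G/N$ makes it exhaustive: either every nontrivial element acts outerly on $\pi_\tau(N)''$ (genuine strong outerness, so Matui--Sato applies), or every element acts innerly on $\pi_\tau(N)''$. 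In the latter ``weakly inner'' case the C*-innerness needed for Proposition \ref{prop:Zstablefixedpoint} is extracted from Theorem \ref{thm:onlyamother} \emph{together with} a separate contradiction argument: if $J_H$ were maximal for $H=\langle s,N\rangle$, one takes a unitary $W\in\pi_\tau(N)''$ implementing $\alpha_s$, normalizes $W^n=\pi_\tau(s^n)$, builds a covariant representation, and uses uniqueness of the trace to force $\tau(Wb)=0$ for all $b\in\pi_\tau(N)''$, contradicting $\tau(WW^*)=1$. Nothing in your proposal plays the role of this argument, so your passage from ``not C*-inner'' to ``strongly outer'' is unjustified.

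The intermediate case you defer as ``the technical heart'' is also a real obstruction, and it is one the paper never faces: the paper reduces to $G/N$ simple, and that reduction is legitimate only because its induction ranges over pairs $(G,N)$ where $N$ is \emph{any} finite-index normal subgroup all of whose irreducible representations generate C*-algebras of decomposition rank at most $1$ (see the footnote in the paper's proof: $N$ is not necessarily nilpotent). Because your induction fixes a nilpotent $N$, you cannot insert an intermediate normal subgroup $N\subsetneq M\subsetneq G$ (such an $M$ is generally not nilpotent), so the simplicity reduction is unavailable and you are forced into the mixed case, where your sketch is not a proof: for the iterated crossed product $\bigl(C^*_\sigma(N)\rtimes_{\alpha,\omega}K/N\bigr)\rtimes G/K$ you would need the $G/K$-action to be strongly outer with respect to the trace, and the intermediate algebra to be simple with unique trace, and C*-outerness of $\alpha_g$ on $C^*_\sigma(N)$ for $g\notin K$ gives none of this. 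The structural repair is to run the induction as the paper does --- over all finite-index normal subgroups with the dr-bound property, reducing to $G/N$ simple --- and to replace your C*-level trichotomy with the von Neumann-level dichotomy. A minor further point: you invoke preservation of $\mathcal{Z}$-stability under quotients, which is true for separable C*-algebras but is neither proved nor cited in the paper; the paper instead realizes $C^*_\pi(G)$ as a direct summand of $C^*_{\pi_\phi}(G)$ via Proposition \ref{prop:fdcenter}, or passes to quotients only after bounding decomposition rank, using (\ref{eq:qdr}).
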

\begin{proof}  By Theorem \ref{thm:drnil} there is a finite index normal subgroup $H$ of $G$ such that $\dr(C^*(H))<\infty.$ Therefore every quotient of $C^*(H)$ also has finite decomposition rank (and therefore finite nuclear dimension) by (\ref{eq:qdr}). From Theorem \ref{thm:qdquotient} and Theorem \ref{thm:T1}, it follows that every primitive quotient of $C^*(H)$ satisfies the conditions of Theorem \ref{thm:manyequivalence}.  
In other words, $G$ has  a finite index subgroup $H$  such that $\dr(C^*_\sigma(H)) \leq 1$ for all irreducible representations $\sigma$ of $H$. 

We proceed by induction on $|G/N|$ where $N$ is a finite index normal subgroup of $G$ with $\dr(C^*_\sigma(N)) \leq 1$ for all irreducible representations $\sigma$ of $N.$\footnote{We emphasize that $N$ is not necessarily nilpotent.}  
Since we proceed by induction, we may suppose that $G/N$ is a simple group.  Let $\pi$ be an irreducible representation of $G.$  From $\pi$ we obtain the representation $\sigma$ of $N$, the ideals $J\subseteq C^*(N)$ and $J_G\subseteq C^*(G)$ and the traces $\tau$ on $N$ and $\phi$ on $G$ as in Definitions \ref{def:setup} and \ref{def:tau}.

If $\Stab_G(\ker(\sigma))=H\neq G$, then by  Corollary \ref{cor:Greenimprep} we have $C^*_{\pi_\phi}(G)\cong M_{G/H}\otimes C_{\pi_\phi}^*(H).$  By Proposition \ref{prop:fdcenter}, $C^*_{\pi_\phi}(G)$ is a direct sum of simple C*-algebras.  Therefore $C^*_{\pi_\phi}(H)$ is also a direct sum of simple C*-algebras.  Since $|H/N|<|G/N|$,  by our induction hypothesis    we obtain $\dr(C^*_{\pi_\phi}(H))\leq1.$  Therefore 
 $\dr(C^*_{\pi_\phi}(G))=\dr(C_{\pi_\phi|_H}^*(H))\leq 1$ by (\ref{eq:drstable}).   
Since $C^*_\pi(G)$ is a quotient of $C^*_{\pi_\phi}(G)$ we have  $\dr(C^*_\pi(G))\leq \dr(C^*_{\pi_\phi}(G))\leq1$ by (\ref{eq:qdr}).

Therefore suppose that  $\Stab_G(\ker(\sigma))=G,$ so that  $J=\ker(\sigma).$ Then  $C^*_{\pi_\tau}(N)\cong C^*(N)/J$ is a simple C*-algebra. By Theorems \ref{thm:T1} and \ref{thm:qdquotient}, $C^*_{\pi_\tau}(N)$ is quasidiagonal with a unique trace.  If $C^*_{\pi_\tau}(N)$ is finite dimensional, then so is $C^*_\pi(G)$, hence $\dr(C^*_\pi(G))=0.$  Therefore suppose that $C^*_{\pi_\tau}(N)$ is infinite dimensional. By our induction hypothesis and Theorem \ref{thm:manyequivalence} it follows that $C^*_{\pi_\tau}(N)$
is $\mathcal{Z}$-stable.

Since $\alpha_s(J)=J$ for every $s\in G/N$ it follows that the twisted action $(\alpha,\omega)$ of $G/N$ on $C^*(N)$ from Section \ref{sec:ctsfield} descends to a twisted action on $C^*_{\pi_\tau}(N).$  For convenience we also write $(\alpha,\omega)$ for this twisted action. We then have $C^*_{\pi_\phi}(G)\cong C^*_{\pi_\tau}(N)\rtimes_{\alpha,\omega} G/N.$

The twisted action of $G/N$ on $C^*_{\pi_\tau}(N)$  defines a homomorphism from $G/N$ into $\textup{Out}(\pi_\tau(N)'').$  Since $G/N$ is simple, either every non-trivial $s\in G/N$ determines an outer automorphism of $\pi_\tau(N)''$  or none of them do.  We consider each case separately.
\\\\
\textbf{Strongly outer case:} Suppose first that the twisted action of $G/N$ is strongly outer.  It follows from  Matui and Sato's theorem \cite[Corollary 4.10]{Matui14a} that $C^*_{\pi_\tau}(N)\rtimes_{\alpha,\omega} G/N$ is $\mathcal{Z}$-stable, and therefore $\dr(C^*_{\pi_\phi}(G))=\dr(C^*_{\pi_\tau}(N)\rtimes_{\alpha,\omega} G/N)\leq 1$ by Theorem \ref{thm:manyequivalence}. Since $C^*_\pi(G)$ is a quotient of $C^*_{\pi_\phi}(G)$ we have $\dr(C^*_\pi(G))\leq \dr(C^*_{\pi_\phi}(G))\leq 1.$
\\\\
\textbf{Weakly inner case:} Now suppose that for every $s\in G/N$, the induced automorphism $\alpha_s$ determines an inner automorphism of $\pi_\tau(N)''.$  We will show that every $\alpha_s$ actually determines an inner automorphism of $C^*_{\pi_\tau}(N).$ 

Let $e\in F\subseteq G$ be a complete set of coset representatives of $G/N.$ Let $s\in F\setminus \{ e\}$.   Let $n$ be the order of $sN\in G/N$, i.e. $n>0$ is the least integer such that $s^n\in N.$ Let $H\leq G$ be the subgroup of $G$ generated by $s$ and $N.$  Then $N\nor H$ and $H/N\cong \Z/n\Z.$    Let $J_H$ be the ideal of $C^*(H)$ generated by $J.$  

If $J_H$ is not a maximal ideal of $C^*(H)$, then there is a $1\leq k<n$ such that $\alpha_{s^k}$ is an inner automorphism of $C^*_{\pi_\tau}(N)$ by Theorem \ref{thm:onlyamother}.  Since $G/N$ is simple, it then follows that  $\alpha_t$ defines an inner automorphism of $C^*_{\pi_\tau}(N)$ for every $t\in G/N.$ Therefore  $C^*_{\pi_\phi}(G)\cong C^*_{\pi_\tau}(N)\rtimes_{\alpha,\omega}G/N$ is $\mathcal{Z}$-stable by Proposition \ref{prop:Zstablefixedpoint}. 
By Proposition \ref{prop:fdcenter}, $C^*_{\pi_\phi}(G)$ is a direct sum of simple C*-algebras.  Since $\ker(\pi_\phi)\subseteq \ker(\pi)$ it then follows that $C^*_\pi(G)$ appears as a direct summand of $C^*_{\pi_\phi}(G)$, so $C^*_\pi(G)$ is also $\mathcal{Z}$-stable.  We then have $\dr(C^*_\pi(G))\leq 1$ by Theorem \ref{thm:manyequivalence}.

Suppose that $J_H$ is a maximal ideal of $C^*(H).$ We will show that this leads to a contradiction.  We have $C^*(H)/J_H\cong C^*_{\pi_\tau}(N)\rtimes_{\alpha,\omega} H/N$ where $(\alpha,\omega)$ is the twisted action of $G/N$ restricted to $H/N.$ The isomorphism class of $C^*_{\pi_\tau}(N)\rtimes_{\alpha,\omega} H/N$ is independent of which coset representatives of $G/N$ were chosen (see Section \ref{sec:ctsfield}).  Therefore for each $1\leq k<n$ suppose we chose liftings $c(k+n\Z)=s^k\in H\leq G.$ Whenever liftings are chosen in this manner,  we can explicitly write out the values of the cocycle $\omega$ from (\ref{eq:omegadef}). If $0\leq i,j< n$, then
\begin{equation} \label{eq:twistdefn}
\omega(i+n\Z, j+n\Z)=\left\{ \begin{array}{ll} 1_{H_{\pi_\tau}} & \textup{ if }i+j<n  \\
                                                                                       \pi_\tau(s^n) & \textup{ if }i+j\geq n  \end{array}\right..
\end{equation}
Let $W\in \pi_\tau(N)''$ be a unitary such that $W\pi_\tau(a)W^*=\pi_\tau(\alpha_s(a))=\pi_\tau(\lambda_sa\lambda_{s^{-1}})$ for all $a\in C^*_{\pi_\tau}(N).$ Therefore $\pi_\tau(s^n)^*W^n\in \pi_\tau(N)'\cap \pi_\tau(N)''.$  Since $\pi_\tau$ is a factor representation of $N$ there is a scalar $e^{i\theta}\in \T$ such that $W^n=e^{i\theta}\pi_\tau(s^n).$  Replacing $W$ with $e^{-i\theta/n}W$ we may assume that $W^n=\pi_\tau(s^n)$.

Now define a covariant representation  of $(C^*_{\pi_\tau}(N),H/N,\alpha,\omega)$ on $H_{\pi_\tau}$ which is the identity on  $C^*_{\pi_\tau}(N)$ and sends $k\in \Z/n\Z\cong H/N$ to $W^k.$ Equation (\ref{eq:twistdefn}) and the fact that $W^n=\pi_\tau(s^n)$ tell us that, indeed, 
\[ \alpha_{s^k} = \text{Ad} W^k \quad \text{ and } \quad W^k W^j = \omega(k, j) W^{k+j},\]
 so $(id, W)$ satisfies the definition of a  covariant representation from Definition \ref{def:twisted-xprod}.  Write $C^*(\pi_\tau(N), W)$ for the quotient of $C^*_{\pi_\tau}(N) \rtimes_{\alpha, \omega} H/N$ generated by this covariant representation.

Since $J_H$ is a maximal ideal, the C*-algebra 
\[ C^*_{\pi_\tau}(N) \rtimes_{\alpha, \omega} H/N\cong C^*(H)/J_H\cong C^*_{\pi_\phi}(H)\]
 is simple. Therefore our covariant representation defines an isomorphism  $\Theta:C^*_{\pi_\phi}(H)\rightarrow C^*(\pi_\tau(N),W)$ such that $\Theta(\pi_\phi(h))=\pi_\tau(h)$ for all $h\in N$ and $\Theta(\pi_\phi(s))=W.$  Theorem \ref{thm:T1} implies that $C^*_{\pi_\phi}(H)$ (and hence $C^*(\pi_\tau(N), W)$) has a unique trace.  
Furthermore, since $C^*(\pi_\tau(N),W)\subseteq \pi_\tau(N)''$ we have $\tau\circ \Theta(a)=\phi(a)$ for all $a\in C^*_{\pi_\phi}(H).$

From Equation (\ref{eq:phitau}) we have  $\phi(\pi_\phi(sh))=0$
for all $h\in N.$ Therefore 
\begin{equation*}
\tau(W\pi_\tau(h))=\Theta(\pi_\phi(sh))=0\quad \textup{ for all }h\in N.
\end{equation*}
Since any C*-algebra is weakly dense in its double commutant,  it follows that $\tau(Wb)=0$ for all $b\in \pi_\tau(N)''.$ 
But this is impossible: since $W\in \pi_\tau(N)''$ is unitary, $\tau(WW^*)=1\neq0.$
\end{proof}

\begin{corollary}
\label{cor:fin-nuc-dim}
For any irreducible representation $\pi$ of a finitely generated, virtually nilpotent group $G$, if $C^*_\pi(G)$ is not AF then $\dr(C^*_\pi(G) ) = 1.$  Moreover, in this case $C^*_\pi(G)$ has nuclear dimension 1. 
\end{corollary}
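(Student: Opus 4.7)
The plan is to obtain Corollary \ref{cor:fin-nuc-dim} as an essentially immediate consequence of Theorem \ref{thm:mainirr} combined with the classical characterization of AF algebras via decomposition rank and nuclear dimension. Theorem \ref{thm:mainirr} has already supplied the upper bound $\dr(C^*_\pi(G)) \leq 1$ for every irreducible representation $\pi$ of a finitely generated virtually nilpotent group $G$. All that remains is to produce a matching lower bound under the hypothesis that $C^*_\pi(G)$ is not AF, and then to transfer this information to nuclear dimension.

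For the lower bound on decomposition rank, I would invoke the well-known fact from Kirchberg and Winter's paper \cite{Kirchberg04} that a separable C*-algebra has decomposition rank $0$ if and only if it is AF. Since $C^*_\pi(G)$ is simple (by Theorem \ref{thm:T1}) and separable, and by hypothesis not AF, this characterization forces $\dr(C^*_\pi(G)) \geq 1$. Combining with the upper bound from Theorem \ref{thm:mainirr} yields $\dr(C^*_\pi(G)) = 1$.

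For the nuclear dimension claim, I would first note the general inequality $\dimnuc(A) \leq \dr(A)$ from \cite{Winter10}, so that $\dimnuc(C^*_\pi(G)) \leq 1$. The analogous result of Winter and Zacharias that $\dimnuc(A) = 0$ if and only if $A$ is AF (see \cite{Winter10}) then provides the matching lower bound $\dimnuc(C^*_\pi(G)) \geq 1$, completing the proof.

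I do not anticipate any serious obstacle here: the corollary is genuinely a corollary, since all the substantial work has already been done in Theorem \ref{thm:mainirr}. The only subtlety is citing the correct AF characterizations for both decomposition rank and nuclear dimension, both of which are established in the references already cited in Section \ref{sec:drZstab}.
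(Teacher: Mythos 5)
Your proposal is correct and follows essentially the same route as the paper: the paper's own proof simply cites the inequality $\textup{dim}_{\textup{nuc}}(A) \leq \dr(A)$ and the equivalence $\textup{dim}_{\textup{nuc}}(A)=0 \Leftrightarrow \dr(A)=0 \Leftrightarrow A$ is AF (both from \cite[Remarks 2.2]{Winter10}), combined implicitly with the bound $\dr(C^*_\pi(G))\leq 1$ from Theorem \ref{thm:mainirr}, exactly as you do. Your write-up merely makes the same steps explicit (and cites \cite{Kirchberg04} rather than \cite{Winter10} for the decomposition rank characterization of AF algebras, which is equally valid).
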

\begin{proof}
It is well known (cf.~\cite[Remarks 2.2]{Winter10}) that $\text{dim}_{\text{nuc}}(A) \leq \dr(A)$ for any C*-algebra $A$, and that 
\[ \text{dim}_{\text{nuc}}(A)=0 \Leftrightarrow \dr(A) = 0 \Leftrightarrow A \text{ is AF.} \qedhere\]
\end{proof}

\begin{theorem} \label{thm:main} Let $G$ be a finitely generated, virtually nilpotent group.  Then $\dr(C^*(G))\leq 2\cdot h(G)!-1$, where $h(G)$ is the Hirsch length of $G.$
\end{theorem}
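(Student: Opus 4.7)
The plan is to follow the structure of the proof of Theorem \ref{thm:drnil}, but with the continuous field decomposition from Proposition \ref{prop:cts-field} (in place of the decomposition over $\widehat{Z(G)}$) and the bound from Theorem \ref{thm:mainirr} on $\dr(C^*_\pi(G))$ for irreducible $\pi$ (in place of Theorem \ref{thm:nildr1}) controlling the fibers. I would induct on $h(G)$; the base case $h(G)=0$ means $G$ is finite, so $C^*(G)$ is finite-dimensional and $\dr(C^*(G))=0$.

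For the inductive step, fix a finite-index nilpotent normal subgroup $N\nor G$, so that $h(N)=h(G)$. By Proposition \ref{prop:cts-field}, $C^*(G)$ is a continuous field over $\widetilde{Z}=\widehat{Z(N)}/(G/N)$, and since $G/N$ is finite we have $\dim \widetilde{Z} = \dim \widehat{Z(N)} = h(Z(N))$. I would then bound $\dr$ of each fiber using Proposition \ref{prop:Greenimp} and (\ref{eq:drstable}), which give $\dr(C^*(G)_{[\gamma]}) = \dr(C^*_{\pi_{\overline \gamma}}(H))$ with $H=\Stab_G(\gamma)$. I split into two cases as in Theorem \ref{thm:drnil}. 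If $\ker \gamma$ is finite, arguing as in \cite[Lemma 3.3]{Eckhardt14b} one can write $\overline \gamma$ (viewed as a trace on $H$) as a finite convex combination of extreme traces $\omega_i$ on $H$; Theorem \ref{thm:T1} then identifies each $C^*_{\pi_{\omega_i}}(H)$ with $C^*_\pi(H)$ for an irreducible representation $\pi$ of $H$, so Theorem \ref{thm:mainirr} together with (\ref{eq:drsum}) yields $\dr(C^*_{\pi_{\overline \gamma}}(H)) \leq 1$. If instead $\ker \gamma$ is infinite, the invariance $\gamma(hzh^{-1}) = \gamma(z)$ for $h\in H$, $z \in Z(N)$, makes $\ker \gamma \leq Z(N) \leq H$ normal in $H$; since $u_z - 1 \in \ker \pi_{\overline \gamma}$ for all $z\in \ker \gamma$, the algebra $C^*_{\pi_{\overline \gamma}}(H)$ is a quotient of $C^*(H/\ker \gamma)$, so because $h(H/\ker \gamma) \leq h(G)-1$, the induction hypothesis and (\ref{eq:qdr}) give $\dr(C^*_{\pi_{\overline \gamma}}(H)) \leq 2(h(G)-1)!-1$.

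Combining the two cases, every fiber satisfies $\dr \leq 2(h(G)-1)!-1$, and Carri\'on's lemma \cite[Lemma 3.1]{Carrion11} yields
\[
\dr(C^*(G)) \leq (h(Z(N))+1) \cdot 2(h(G)-1)! - 1.
\]
Provided $h(Z(N))<h(G)$, this immediately gives $\dr(C^*(G)) \leq 2\, h(G)! - 1$.

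The main obstacle is the equality case $h(Z(N)) = h(G)$: this forces $Z(N)$ to have finite index in $N$, hence $G$ to be virtually abelian. In this situation, however, $N/Z(N)$ is finite, so each $C^*(N/Z(N),\sigma_\eta)$ appearing in $C^*(N)_{[\gamma]}$ is finite-dimensional; consequently every fiber $C^*(G)_{[\gamma]}$ of the continuous field over $\widetilde{Z}$ is finite-dimensional, and Carri\'on's lemma directly yields $\dr(C^*(G)) \leq \dim \widetilde{Z} = h(G) \leq 2\, h(G)! - 1$ for $h(G)\geq 1$, completing the induction.
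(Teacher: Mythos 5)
Your overall architecture is the same as the paper's: induction on $h(G)$, the continuous field over $\widetilde{Z}$ from Proposition \ref{prop:cts-field}, the dichotomy on $\ker\gamma$ for the fibers, Carri\'on's lemma, and the separate treatment of the degenerate case $h(Z(N))=h(G)$ via finite-dimensional fibers. Your infinite-kernel case, your Carri\'on estimate, and your equality case are all correct and essentially verbatim the paper's. The genuine gap is in the finite-kernel case. You assert that, ``arguing as in \cite[Lemma~3.3]{Eckhardt14b},'' the trivial extension $\overline{\gamma}$ can be written as a finite convex combination of extreme traces \emph{on $H=\Stab_G(\gamma)$}. But $H$ is only virtually nilpotent, while that lemma and its proof concern nilpotent groups: there the trace is supported on the honest center, $C^*_{\pi_{\overline{\gamma}}}$ is a twisted group algebra $C^*(G/Z(G),\sigma_\gamma)$, and the finite-dimensionality of $Z(\pi_{\overline{\gamma}}(G)'')$ is read off from that structure. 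For $H$ the support $Z(N)$ of $\overline{\gamma}$ is not central, $C^*_{\pi_{\overline{\gamma}}}(H)$ is a twisted \emph{crossed product} over $H/N$, and proving that $Z(\pi_{\overline{\gamma}}(H)'')$ is finite dimensional is precisely the content of Proposition \ref{prop:fdcenter} --- one of the paper's main technical results. Your one-line citation assumes exactly the point where the passage from nilpotent to virtually nilpotent groups is hard.

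The claim you need is true, but it must be earned, and the paper earns it as follows: choose $N$ torsion free (possible by \cite{Segal83}), so that finiteness of $\ker\gamma \nor N$ forces $\ker\gamma$ to be trivial; then \cite[Lemma 2.3]{Eckhardt14b} (which requires torsion freeness) shows $\overline{\gamma}$ is an \emph{extreme} trace on $N$, so $J=\ker(\pi_{\overline{\gamma}})\subseteq C^*(N)$ is a \emph{maximal} ideal by Theorem \ref{thm:T1}. Maximality of $J$ is what allows the ideals $J$ and $J_G$ to be matched with those of Definition \ref{def:tau}, after which Proposition \ref{prop:fdcenter} and Corollary \ref{cor:Greenimprep} show that the fiber $C^*(G)_{[\gamma]}\cong C^*(G)/J_G$ is a finite direct sum of simple quotients of $C^*(G)$, each of decomposition rank at most $1$ by Theorem \ref{thm:mainirr} and (\ref{eq:drsum}). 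If you insist on avoiding the torsion-free reduction, you could patch your argument by first applying the nilpotent Lemma~3.3 to $N$ to see that $J$ is a finite intersection of maximal ideals of $C^*(N)$, decomposing these into $H$-orbits, and invoking Proposition \ref{prop:fdcenter} orbit by orbit; either way, Proposition \ref{prop:fdcenter} (or an equivalent statement) is indispensable, and the appeal to a nilpotent-group lemma for the virtually nilpotent group $H$ does not substitute for it.
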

\begin{proof}  We proceed by induction on  $h(G).$  If $h(G)=0$, then $G$ is finite.  Hence $C^*(G)$ is finite dimensional and $\dr(C^*(G))=0.$

Let $N\nor G$ be nilpotent and finite index. Since every finitely generated nilpotent group contains a torsion free, finite index subgroup \cite{Segal83} we may assume that $N$ is torsion free. 

Decompose $C^*(G)$ as a continuous field over $\widetilde Z$ as in Section \ref{sec:ctsfield}. Below we use the notation of Section \ref{sec:ctsfield}. Let $\gamma\in \widehat{Z(N)}$ and write $H$ for the stabilizer of $\gamma$.  We will show that the decomposition rank of the fiber $C^*(G)_{[\gamma]}$ is at most $2(h(G)-1)!-1.$

Suppose first that $\ker(\gamma)\leq Z(N)$ is infinite.  Then $\ker(\gamma)\nor H,$ and the representation $\pi_{\overline{\gamma}}$ of $H$ from Proposition \ref{prop:Greenimp} descends to a representation of $H/\ker(\gamma)$.  Consequently, 
 Proposition \ref{prop:Greenimp} 
implies that $C^*(G)_{[\gamma]}$ is stably isomorphic to a quotient of the group $C^*$-algebra $C^*(H/\ker(\gamma)).$  Since $\ker(\gamma)$ is infinite, we have $h(H/\ker(\gamma))\leq h(G)-1.$  Therefore, by our induction hypothesis and the fact that decomposition rank is insensitive to stable isomorphism (\ref{eq:drstable}) we have
\begin{equation*}
\dr(C^*(G)_{[\gamma]}) = \dr(C^*(H/\ker(\gamma)))\leq 2(h(G)-1)!-1.
\end{equation*}

Now suppose  that $\ker(\gamma)$ is finite. Since $N$ is torsion free and $\ker(\gamma) \nor N$, the finiteness of $\ker(\gamma)$ implies that $\ker(\gamma)$ is trivial.
 Let $\overline{\gamma}$ be the trivial extension of $\gamma$ to $N.$  Since $N$ is torsion free, by \cite[Lemma 2.3]{Eckhardt14b}, $\overline{\gamma}$ determines an extreme trace on $N.$  Therefore $J:=\ker(\pi_{\overline{\gamma}})$ is a maximal ideal of $C^*(N)$ by Theorem \ref{thm:T1}. 

Although $\overline{\gamma}$ need not induce an extreme trace on $G$,  we can still use the structural analysis of Section \ref{sec:GrelN}. To see this, let $J_G$ be the ideal of $C^*(G)$ generated by $J.$ For any maximal ideal $I_{max}$ of $C^*(G)$ containing $J_G$ we have $I_{max}\cap C^*_{\pi_\tau}(N)=J$ by maximality of $J;$  thus, the ideals $J$ and $J_G$ match up with the ideals $J$ and $J_G$ of Definition \ref{def:tau}.
Consequently, Proposition \ref{prop:Greenimp} and Corollary \ref{cor:Greenimprep} imply that $C^*(G)_{[\gamma]}\cong C^*(G)/J_G$, and the latter is isomorphic to a direct sum of maximal quotients of $C^*(G)$  by Proposition \ref{prop:fdcenter}. Hence $\dr(C^*(G)_{[\gamma]})\leq1$ by Theorem \ref{thm:mainirr} and (\ref{eq:drsum}).
We have therefore shown that $\dr(C^*(G)_{[\gamma]})\leq 2(h(G)-1)!-1$ for all $\gamma\in \widehat{Z(N)}.$

Since $\widetilde Z$ is a quotient of $\widehat{Z(N)}$ by a finite group action we have $\dim(\widetilde{Z})=\dim(\widehat{Z(N)}).$ 
Using our bound on $\dr(C^*(G)_{[\gamma]})$, we  apply \cite[Lemma~3.1]{Carrion11} to obtain
\begin{align*}
    \dr(C^*(G)) & \le 2(\dim{\widetilde Z} + 1) (h(G)-1)! - 1\\
                      & =  2(\dim{\widehat{Z(N)}} + 1) (h(G)-1)! - 1
\end{align*}
  Note that $\dim(\widehat{Z(N)}) = h(Z(N)) \le h(N)=h(G)$; if equality is achieved, then 
  \[C^*(G)_{[\gamma]} \cong \bigoplus_{\eta \in [\gamma]} C^*(N/Z(N), \sigma_\eta) \rtimes_{\tilde \alpha, \tilde \omega} G/N\]
   is finite dimensional for all $\gamma$.   In this case, 
  $\dr(C^*(G)) = h(G) \le 2h(G)! - 1$ by \cite[Lemma~3.1]{Carrion11}.  Finally, if $\dim(\widehat{Z(G)}) < h(G)$, our bound above on $\dr(C^*(G))$ becomes, again,
\begin{equation*}
    \dr(C^*(G)) \le 2h(G)! - 1. \qedhere
\end{equation*}
\end{proof}
\section{Questions and Comments} \label{sec:comments}
We finish this paper with three natural questions.
\begin{question} \label{ques:otherfdr} Are there any finitely generated  groups $G$ which are not virtually nilpotent but which have  $\dr(C^*(G))<\infty$?
\end{question}
 In some sense the only known obstruction to finite decomposition rank is the lack of strong quasidiagonality \cite[Theorem 5.3]{Kirchberg04}. Indeed, in the case of classifiable C*-algebras \cite[Theorem 6.2.(iii)]{Tikuisis17} (strong) quasidiagonality \emph{is} the only obstruction to decomposition rank and marks the dividing line between finite nuclear dimension and finite decomposition rank.
 
In \cite{Eckhardt15} the first author gave examples of polycyclic but not virtually nilpotent groups whose group C*-algebras are  strongly quasidiagonal. These are therefore natural candidates to provide a positive answer to Question \ref{ques:otherfdr}. On the other hand, these groups lack any sort of nice continuous field decomposition, leaving the methods of this paper inapplicable.
\begin{question} \label{ques:uct}
Let $G$ be virtually nilpotent and $\pi$ an irreducible representation of $G.$  Does $C^*_\pi(G)$ satisfy the universal coefficient theorem?
\end{question}
In \cite{Eckhardt16} the first two authors showed that the answer is yes whenever $G$ is nilpotent, by establishing structure theorems for $C^*_\pi(G)$ in this case.  We then deduced the universal coefficient theorem from the permanence properties established by Rosenberg and Schochet  \cite{Rosenberg87} in their initial work on the universal coefficient theorem.

We believe that such an approach will no longer work for virtually nilpotent groups.
The universal coefficient theorem is notoriously difficult to prove in the presence of torsion.  In fact, Barlak and Szab\'o show in \cite[Theorem 4.17]{Barlak17}  that \emph{all} nuclear C*-algebras satisfy the universal coefficient theorem if and only if all crossed products of the form $\mathcal{O}_2\rtimes \Z/p\Z$ satisfy the universal coefficient theorem.  

Torsion can, of course,  be present in nilpotent groups.  In the nilpotent setting, however, \cite{Eckhardt16} shows that all of the torsion can be ``pushed to the bottom"  of the C*-algebra.  {Specifically,  \cite[Theorem 3.4]{Eckhardt16} establishes that when $G$ is nilpotent, $C^*_\pi(G) \cong A \rtimes \Z \cdots \rtimes \Z$, where $A = C_0(X) \rtimes_{\alpha, \omega} F$ is a twisted crossed product of an abelian $C^*$-algebra by a finite group $F$.} For  virtually nilpotent groups, however, we must deal with torsion ``at the top;" the C*-algebras generated by irreducible representations of virtually nilpotent groups may be of the form  $A\rtimes F$, where the C*-algebra $A$ satisfies the universal coefficient theorem and $F$ is a finite group.

We would love to be wrong, but with present tools and especially in light of  Barlak and Szab\'o's result it does not appear that Question \ref{ques:uct} is any  easier than the general universal coefficient question for all nuclear C*-algebras.
\begin{question}  Can the results of this paper be extended to all groups?  Suppose that $N\leq G$ is a finite index subgroup.  Does  finite decomposition rank (resp. nuclear dimension) of $C^*(G)$ imply the same for $C^*(N)$?  Does  finite decomposition rank (resp. nuclear dimension) of $C^*(N)$ imply the same for $C^*(G)?$ 
\end{question}

\bibliographystyle{plain}

\bibliography{mybib}

\end{document}